\newtheorem{thm}{Theorem}
\newtheorem{remark}{Remark}
\newtheorem{prop}{Proposition}
\newtheorem{problem}{Problem}
\newtheorem{lemma}{Lemma}
\newcommand{\T}{{T}}
\newcommand{\trace}{{\mathrm{trace}}}
\newcommand{\init}{{\rm init}}
\newcommand{\fin}{{\rm fin}}
\newcommand{\tr}{{\rm tr}}
\newsavebox\CBox
\title{\huge 
Minimizing Control Attention:\\
The Linear Gauss-Markov paradigm}
\author{Ralph Sabbagh, Asmaa Eldesoukey, Mahmoud Abdelgalil, and Tryphon T. Georgiou
\thanks{R. Sabbagh, A. Eldesoukey, and T. T. Georgiou are with the Department of Mechanical and Aerospace
Engineering, University of California, Irvine, CA 92697 USA; \{rsabbag1,aeldesou,tryphon\}@uci.edu.}
        \thanks{M. Abdelgalil is with the Department of Electrical and Computer Engineering, University of California, San Diego, CA 92093 USA; mabdelgalil@ucsd.edu.}
\thanks{Supported by AFOSR (FA9550-24-1-0278), ARO (W911NF-22-1-0292), and NSF (ECCS-2347357).}}
\begin{document}

\maketitle

\begin{abstract}
We revisit the concept of `attention' as a technical term to quantify the effort in calibrating control action based on available data. While Wiener, in his work on Cybernetics, anticipated key principles on prioritizing task-relevant signals, it was not until the late 1990's when Brockett first formulated pertinent optimization problems that have inspired subsequent as well as the present work.
`Attention,' as a technical term, is defined so as to quantify the dependence of the control law on the time and space/state coordinate; a control law that is independent of time and space, assuming it meets specifications, requires vanishing attention. In the present work we focus on Linear-Markovian dynamics with Gaussian state uncertainty so as to analyze the structure of {\em minimal-attention control} schemes that steer the dynamics between terminal states with Gaussian uncertainty profile.
\end{abstract}

\begin{IEEEkeywords}
Optimal Transport, Minimum Attention Control, Optimal Control, Linear Quadratic Theory
\end{IEEEkeywords}

\section{INTRODUCTION}
\IEEEPARstart{I}{nformation} and control have been inextricably linked since the infancy of the field when Wiener, in his famous work on `{\em Cybernetics or Control and Communication in the Animal and the Machine}' \cite{wiener2019cybernetics}, emphasized that ``the problems of control engineering and of communication engineering were
inseparable.'' Wiener went on to highlight the significance of abstracting the notion of a ``message'' -- a sequence of measurable events that are needed to carry out desired tasks.

Much of the subsequent work by Wiener and others focused on predicting, encoding, and filtering such messages, and later on, on optimizing control action on the basis of such messages/signals. Yet, the most basic question on how much attention is needed on the part of a regulating apparatus in carrying out a task, was not formulated, let alone addressed, until the work of Roger Brockett half a century after the appearance of `{\em Cybernetics},' almost to the date, in 1997 \cite{Brockett1997MinAttn}.

 Brockett's interest in `{\em minimum–attention control}' was motivated by control engineering practice, in an attempt to quantify implementation costs in abstract terms. 
 As a guiding principle, Brockett put forth the point of view that ``the easiest control law to implement is that of a constant input,'' and that ``anything else requires some attention'' \cite{Brockett1997MinAttn,Brockett2003MinimizingAttention}.
 
 From this vantage point, he argued that for the case where the control action
 $U(t,X)$ is a function of time $t$ and the state $X$ of the dynamical system at hand, the dependence of $U$ on these two variables, namely, $\partial_tU$ and $\nabla_X U$, must be explicitly acknowledged and suitably penalized. Indeed, even without taking into account processing costs, interrogation that seeks the current time and the current value of the state, is tantamount to the attention that a controller must pay in a colloquial sense. Thus, Brockett argued \cite{Brockett1997MinAttn}, the variability of the control law $U(t,X)$
  with respect to changes in both the state $X$ and time $t$ is intrinsically linked to resources that are needed to physically implement the law. To this end, for the special case where our task is to steer the first-order system
  \[
  \dot X_t=U(t,X_t),
  \]
Brockett proposed the functional
\begin{align}\label{eq:etaa}
   \eta_a=\frac{1}{2}\int_0^\tau\int_{\mathbb{R}^n} \|\nabla_{X}U\|^2+\|\partial_tU\|^2\,\text{d}X\text{d}t,
\end{align}
 as a suitable measure for the implementation cost associated with a particular feedback control Law $U$ \cite{Brockett1997MinAttn,Brockett2003MinimizingAttention,Brockett2012Liouville}. His original insight has since permeated resource-aware control \cite{Jang2015BallCatching,Anta2010MinAttnAnytime,Donkers2014MinAttnLinear,heemels2012introduction,Dirr2016EnsembleMeanVariance,NagaharaNesic2020CDC,nowzari2016distributed,eldesoukey2025collective}, where decisions about when to measure or update are seen as an integral part of the control design, and where sparsity of interventions is a feature rather than an artifact.

Building on Brockett’s original vision, we study covariance steering for linear {\color{black}finite-dimensional} stochastic systems in which attention is explicitly priced. Specifically, we address the problem of steering the covariance $\Sigma_t:=\mathbb{E}[X_tX_t^\top]$ of the (zero-mean) Gauss-Markov process 
\begin{align}\label{eq:X}
    \text{d}X_t = A_tX_t\ \text{d}t + B\, \text{d}W_t,\;\;\;X_0\sim \mathcal N(0,\Sigma_0),
\end{align}
where $W_t$ is a Wiener process, via a control $U(t,X_t)=A_tX_t$ that
 minimizes a probabilistic version of Brockett's attention functional\footnote{For linear dynamics and an unbounded state space, \eqref{eq:etaa} need not be bounded. A natural adjustment is to consider the expectation of the integrand--focusing our attention to states that are more likely to be reached.}. This functional takes the form
 \begin{align}\label{eq:J}
 \begin{split}
 J_\alpha(A,\Sigma):= &  \, \alpha \int_0^\T \tr(A_tA_t^\top)\, \text{d}t \\
  & +(1-\alpha)\int_0^\T \tr(\dot{A}_t\Sigma_t\dot{A}_t^\top)\, \text{d}t.
  \end{split}
\end{align}
The parameter $\alpha\in[0,1]$ weighs in the relative importance of the two terms, referred to as the {\em spatial} and {\em temporal} attention, respectively. The spatial attention 
$$
\int_0^T\mathbb E\left[\|\nabla_X U\|^2_F\right]{\rm d}t =\int_0^T\tr(A_tA_t^\top){\rm d}t,
$$
penalizes how fast the feedback varies across space, whereas the temporal attention 
$$
\int_0^T\mathbb E\left[\|\partial_tU\|^2_F\right]{\rm d}t=\int_0^T\tr(\dot{A}_t\Sigma_t\dot{A}_t^\top){\rm d}t,
$$
penalizes how rapidly the feedback gain changes with time. Throughout, $\|\cdot\|_F$ denotes the Frobenius norm.

Whereas the above terms are reminiscent of quadratic cost functionals in linear quadratic regulator theory, covariance steering, and optimal transport \cite{Astrom,chen2021stochastic,ChenGeorgiouPavon2016_TAC2,Villani2009OptimalTransport}, the problem of minimizing \eqref{eq:J} subject to the dynamical and boundary constraints considered herein is decidedly different. For one thing, the Gauss-Markov model contains a product term of control and state, and hence the dynamics are bilinear. Moreover, the departure from earlier paved paths in covariance steering is much deeper.  There is no apparent re-parametrization that renders the problem convex\footnote{Functionals of the form \label{footnote2} $\int\tr(A_t\Sigma_tA_t){\rm d}t$ in covariance steering and optimal transport turn out to be convex when re-parametrized in $(U,A)$ with $U:=\Sigma A$ being the control and $\Sigma$ being the dynamical variable; an analogous change of parameters that may render \eqref{eq:J} convex does not seem likely.}. This underscores the need for a substantially more involved mathematical analysis, to establish the existence and the nature of solutions, as pursued herein.

In this work we formulate the problem of minimum attention steering in a Gauss-Markov setting. The main results and new insights gained are as follows:
\begin{itemize}
  \item[-] \emph{Existence of solutions.} We establish existence of solutions via the direct method in the calculus of variations.
  %In particular, optimal trajectories stay uniformly in the interior of $\mathbb S_{+}^n$ with explicit two–sided spectral bounds depending only on the problem parameters and the optimal cost.
  %
  \item[-] \emph{First–order optimality conditions and regularity upgrade.}
  We provide optimality conditions and exploit the structure to upgrade the regularity of solutions.
  %We derive the Hilbert–space first-order necessary conditions satisfied by the minimizer and exploit the structure to upgrade the regularity properties for the Lagrange multiplier, minimum attention control, and corresponding covariance trajectories.
  %and further illustrate via a numerical example.
  %
  \item[-] \emph{Minimal attention for $\alpha\in\{0,1\}$.} We analyze separately the limiting cases when $\alpha$ is either $1$ (spatial attention) or $0$ (temporal attention).
  \item[-] \emph{Information–geometric links.} We compare `small-attention' trajectories to Fisher–Rao geodesics. %Interpreting minimum attention steering through Fisher–Rao geometry is meaningful since this metric quantifies intrinsic statistical distinguishability of Gaussian states; proximity to Fisher–Rao geodesics therefore indicates that attention–optimal controls transport the covariance with minimal information gain beyond what is required by the endpoint constraints.
\end{itemize}
In sum, we establish existence, optimality conditions, sharp regularity, closed–form special cases, and an information–geometric comparison principle for attention–penalized covariance steering beyond specialized cases.

\subsubsection*{Organization of this article} We include notation and some preliminaries in Section \ref{sec:notation}, and then proceed with the problem formulation in Section \ref{sec:main-problem}. Section \ref{sec:Existence}, contains the main results of the paper on the existence and regularity of minimum attention control schemes and of the corresponding trajectories. Results on the existence and structure of the optimal control schemes for the two special and important cases, where $\alpha$ is either $1$ or $0$, are presented in Sections \ref{sec:spatial} and \ref{sec:temporal}, respectively. An information-geometric link between the attention functional and the Fisher-Rao action is discussed in Section \ref{sec:fisher}. We close with an academic example in Section \ref{sec:examples}, showcasing the behavior of numerically-attained minimizers, and concluding remarks in Section \ref{sec:conclusions}.

\section{Notation and preliminaries}\label{sec:notation}
Let $\mathbb{R}^{n\times m}$ denote the set of $n$ by $m$ matrices, and $\mathbb{S}^n$, $\mathbb{S}_{+}^n$, $\mathbb{A}^n\subset \mathbb{R}^{n\times n}$ the symmetric, positive definite,  and antisymmetric matrices, respectively. Let $\mathcal{H}$ be the Hilbert space formed by the direct sum:
\begin{align*}
    \mathcal{H}=H^1([0,\T];\mathbb{R}^{n\times n})\oplus H^1([0,\T];\mathbb{S}^n),
\end{align*}
where $H^1([0,\T];\mathbb{R}^{n\times n})$ and $H^1([0,\T];\mathbb{S}^n)$ are the Sobolev spaces of $\mathbb{R}^{n\times n}$ and $\mathbb{S}^n$-functions that are in $L^2([0,\T])$ along with their weak first-derivatives. Recall that $H^1([0,\T]; \cdot):=W^{1,2}([0,\T]; \cdot)$ can be identified with the space of absolutely continuous functions with square-integrable derivatives; more generally, the same is true for $W^{1,p}([0,\T];\cdot),~p\geq 1$ with derivatives in $L^p([0,\T];\cdot)$ \cite[Ch.\ 5]{evans2022partial}. 
Throughout this work we view elements in Sobolev spaces, interchangeably, as either equivalence classes of functions or continuous representatives; thus, we write the $H^1$-inner product
\[
\langle Y,Z\rangle_{H^1}:=\int_0^\T \left(\tr(Y_tZ_t^\top)+ \tr(\dot Y_t\dot Z_t^\top)\right)\, \text{d}t.
\]
The space  $H_0^1([0,T])\subseteq H^1([0,T])$ consists of functions vanishing at the endpoints $0$ and $T$, and $H^{-1}$ denotes the continuous linear functionals on $H_0^1$. These form the {\em Gelfand triple}: $ H_0^1\subset L^2 \subset H^{-1}$ \cite{GelfandVilenkin1964}.

We denote by $\|\cdot\|$ and $\|\cdot\|_F$ the operator and Frobenius norms of a matrix, respectively, then, $\|\cdot\|\leq \|\cdot\|_F\leq \sqrt{n}\|\cdot\|$. We use a similar notation $\|\cdot\|_X$ to denote the norm of a Banach space {$X$}, typically for $X\in\{L^1,L^2,H^1,C^0\}$. We recall that the norm-squared in a Hilbert space is weakly lower semi-continuous, i.e., for any weakly converging sequence $x^k\rightharpoonup x$, $ \liminf_{k\rightarrow\infty}\|x_k\|^2\geq \|x\|^2$.
%(since $\|x-x_k\|^2\geq0$ implies $\|x_k\|^2\geq \langle x,x_k \rangle +\langle x_k,x \rangle-\|x\|^2$, taking $\liminf$ yields the statement). 
Additionally, we note that the matrix square root function is Lipschitz continuous on a set of positive-definite matrices that are bounded below by a positive constant \cite[p.\ 304]{bhatia2013matrix}.

We finally recall that a map $F:X\to Y$ is \emph{Fr\'echet differentiable at $x\in X$} if there exists a bounded linear map
$L$ from $X$ to $Y$ (denoted $L\in \mathcal{B}(X,Y)$) such that
\begin{align}
\lim_{\|h\|_X\to 0}\frac{\|\,F(x+h)-F(x)-L(h)\,\|_Y}{\|h\|_X}=0.\label{def:frechet}
\end{align}
The map $L$ (unique if it exists) is called the \emph{Fr\'echet derivative} of $F$ at $x$ and is denoted by
$D_xF(x)$. Equivalently,
\[
F(x+h)=F(x)+D_xF(x)(h)+o(\|h\|_X)\qquad(h\to 0).
\]
If $F$ is Fr\'echet differentiable at every point of $X$ and $x\mapsto D_xF(x)$ is continuous as a map
$X\to\mathcal B(X,Y)$, then $F$ is said to be of class $C^1$ from $X$ to $Y$.

\section{Problem formulation}\label{sec:main-problem}

Consider the $n$-dimensional It$\hat{\text{o}}$ process $X_t$, satisfying
\begin{align}\label{eq:Ito}
    \, \text{d}X_t = A_tX_t\, \text{d}t + B\, \text{d}W_t,
\end{align}
for $t \in [0,T]$ and $X_0$ an $n$-dimensional centered normal random variable. Throughout, $A_t\in\mathbb{R}^{n\times n}$ is a control matrix,  $B$ $\in\mathbb{R}^{n\times m}$, and $W_t$ is a standard $m$-dimensional Wiener process\footnote{Although  $B$ is taken to be constant for simplicity, the results extend to $B_t$ being a function of time with sufficient regularity.}.
One can readily verify that the state covariance $\Sigma_t:=\mathbb{E}_t[X_tX_t^\top]$ obeys the differential Lyapunov equation
      \begin{align}
\dot{\Sigma}_t&=A_t\Sigma_t+\Sigma_t A_t^\top+BB^\top,\label{eq:constr1}
\end{align}
for $t\in[0,\T]$.
The problem addressed in this paper is to steer the state covariance between endpoint specifications while minimizing the attention functional \eqref{eq:J}, as stated next.
%\vspace{1pt}
\begin{problem} \label{problem:main-min-attention}
 Given $\T>0$, $\alpha\in(0,1)$, $B\in\mathbb{R}^{n\times m}$, and $\Sigma_\init$, $\Sigma_\fin\in\mathbb{S}_{+}^n$, determine
  \begin{align*}
    \min_{(A,\Sigma)\in\mathcal{H}} J_\alpha(A,\Sigma),
  \end{align*}
  subject to \eqref{eq:constr1}, and the end-point specifications
 \begin{align}
\Sigma_0&=\Sigma_\init,~\Sigma_\T=\Sigma_\fin.\label{eq:constr2} 
  \end{align}
  
\end{problem}

In the next section we establish existence of solutions and study their properties.

\section{Main results}\label{sec:Existence}

We first establish existence of solutions to Problem~\ref{problem:main-min-attention} using the direct method in the calculus of variations, and prove that optimal paths \{$\Sigma_t^\star,~t\in[0,\T]$\} are uniformly bounded in the interior of $\mathbb{S}_{+}^n$. Subsequently, we derive the first-order necessary conditions and show that minimizers are smooth in time.  Note that since Problem \ref{problem:main-min-attention} is not convex in $(A,\Sigma)$, it is not clear that uniqueness of a minimizer can be guaranteed at the outset (see footnote \ref{footnote2}).

\subsection{Existence of a minimizer and uniform bounds}

\begin{thm}
%[{\color{black}Existence of a minimizer}]
\label{eq:thm}
Problem~\ref{problem:main-min-attention} admits a solution $(A^\star,\Sigma^\star)\in\mathcal{H}$. Moreover, any  solution $\Sigma^\star$ satisfies
\begin{align}\label{eq:bounds}
   cI\leq  \Sigma_t^\star \leq CI,~~\forall t\in[0,\T],
\end{align}
where 
\begin{align*}
    c &:= e^{-2\sqrt{\T J^\star_\alpha/\alpha}} \lambda_{\min}(\Sigma_{\init}),~\text{and}\\
  C&:= e^{2\sqrt{\T J^\star_\alpha/\alpha}} (\lambda_{\max}(\Sigma_{\init})+\T\lambda_{\max}(BB^\top)).  
\end{align*}
\end{thm}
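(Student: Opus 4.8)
The plan is to establish existence by the direct method in the calculus of variations, and to read off \eqref{eq:bounds} from the $L^2$-estimate on $A$ that the cost itself supplies. First I would verify that the admissible set is nonempty, so that $J_\alpha^\star:=\inf J_\alpha<\infty$: for any $C^2$ curve $t\mapsto\Sigma_t\in\mathbb{S}_+^n$ joining $\Sigma_\init$ to $\Sigma_\fin$, the matrix $A_t:=\tfrac12(\dot\Sigma_t-BB^\top)\Sigma_t^{-1}$ is $C^1$, hence in $H^1$, and solves the Lyapunov constraint \eqref{eq:constr1}, so $(A,\Sigma)\in\mathcal H$ is feasible.

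Next come the a priori bounds, which already yield the second assertion. For any feasible $(A,\Sigma)$, the spatial term alone gives $\int_0^\T\|A_t\|_F^2\,\mathrm{d}t\le J_\alpha(A,\Sigma)/\alpha$, so $\int_0^t\|A_s\|\,\mathrm{d}s\le\sqrt{\T J_\alpha(A,\Sigma)/\alpha}$ by Cauchy--Schwarz. Writing the solution of \eqref{eq:constr1} as $\Sigma_t=\Phi_{t,0}\,\Sigma_\init\,\Phi_{t,0}^\top+\int_0^t\Phi_{t,s}BB^\top\Phi_{t,s}^\top\,\mathrm{d}s$, with $\Phi_{t,s}$ the transition matrix of $\dot\Phi=A\Phi$, Gr\"onwall's inequality bounds both $\|\Phi_{t,s}\|$ and $\|\Phi_{t,s}^{-1}\|$ by $\exp(\sqrt{\T J_\alpha(A,\Sigma)/\alpha})$. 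Discarding the positive semidefinite integral term and using $\Phi_{t,0}\,\Sigma_\init\,\Phi_{t,0}^\top\ge\lambda_{\min}(\Sigma_\init)\,\sigma_{\min}(\Phi_{t,0})^2I$ produces the lower bound in \eqref{eq:bounds}; retaining both terms and estimating them in operator norm produces the upper bound. At an optimum $J_\alpha(A,\Sigma)=J_\alpha^\star$, so the constants are exactly $c$ and $C$. Applied instead to a minimizing sequence $(A^k,\Sigma^k)$ normalized so that $J_\alpha(A^k,\Sigma^k)\le J_\alpha^\star+1$, the same computation gives a uniform positive lower bound $\Sigma_t^k\ge\bar cI$, a uniform upper bound $\Sigma_t^k\le\bar CI$, and a uniform $L^2$ bound on $A^k$.

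For compactness, the temporal term together with $\Sigma_t^k\ge\bar cI$ bounds $\int_0^\T\|\dot A_t^k\|_F^2\,\mathrm{d}t$ by $(J_\alpha^\star+1)/((1-\alpha)\bar c)$, and \eqref{eq:constr1} with $\|\Sigma_t^k\|\le\bar C$ bounds $\dot\Sigma^k$ in $L^2$; hence $(A^k,\Sigma^k)$ is bounded in $\mathcal H$, and along a subsequence $A^k\rightharpoonup A^\star$, $\Sigma^k\rightharpoonup\Sigma^\star$ in $\mathcal H$. The compact embedding $H^1([0,\T])\hookrightarrow C^0([0,\T])$ upgrades these to uniform convergence, so the endpoint conditions \eqref{eq:constr2} and the bounds survive in the limit; and since $A^k\to A^\star$ and $\Sigma^k\to\Sigma^\star$ uniformly while $\dot\Sigma^k\rightharpoonup\dot\Sigma^\star$ in $L^2$, the bilinear constraint \eqref{eq:constr1} also passes to the limit. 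Thus $(A^\star,\Sigma^\star)$ is feasible and inherits $\bar cI\le\Sigma_t^\star\le\bar CI$.

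It remains to prove $J_\alpha(A^\star,\Sigma^\star)\le\liminf_k J_\alpha(A^k,\Sigma^k)=J_\alpha^\star$. The spatial term converges along the subsequence by uniform (hence $L^2$) convergence of $A^k$. The temporal term is the main obstacle: it is quadratic in $\dot A^k$, which converges only weakly, and it carries the varying weight $\Sigma_t^k$. I would split $\tr(\dot A_t^k\Sigma_t^k(\dot A_t^k)^\top)=\tr(\dot A_t^k\Sigma_t^\star(\dot A_t^k)^\top)+\tr(\dot A_t^k(\Sigma_t^k-\Sigma_t^\star)(\dot A_t^k)^\top)$; the second piece integrates to at most $\|\Sigma^k-\Sigma^\star\|_{C^0}\int_0^\T\|\dot A_t^k\|_F^2\,\mathrm{d}t\to 0$, while the first is the squared norm of $\dot A^k$ in the fixed Hilbert norm $V\mapsto\big(\int_0^\T\tr(V_t\Sigma_t^\star V_t^\top)\,\mathrm{d}t\big)^{1/2}$, which is equivalent to $\|\cdot\|_{L^2}$ because $\bar cI\le\Sigma_t^\star\le\bar CI$, and is therefore weakly lower semicontinuous. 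Hence $\liminf_k J_\alpha(A^k,\Sigma^k)\ge J_\alpha(A^\star,\Sigma^\star)\ge J_\alpha^\star$, so $(A^\star,\Sigma^\star)$ is a minimizer, and the a priori bounds of the second paragraph, applied at any minimizer, give \eqref{eq:bounds}. The crux is this final splitting, which relies on the uniform $H^1$ bound on $A^k$ — itself a consequence of the uniform positivity of the $\Sigma^k$.
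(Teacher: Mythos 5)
Your proposal is correct and follows the same overall architecture as the paper's proof: feasibility via an interpolating reference pair; a priori two-sided spectral bounds on $\Sigma$ from the Gr\"onwall estimate driven by the spatial term $\alpha\int\|A\|_F^2$; uniform $H^1$ bounds on $(A^k,\Sigma^k)$ once the lower bound on $\Sigma^k$ is in hand; weak compactness plus Rellich--Kondrachov to pass to a uniformly and weakly convergent subsequence; feasibility of the limit; and finally weak lower semicontinuity of $J_\alpha$. Where you genuinely diverge is in the key lower-semicontinuity step for the temporal term $\int\tr(\dot A^k\Sigma^k(\dot A^k)^\top)\,\mathrm dt$. The paper shows $\dot A^k(\Sigma^k)^{1/2}\rightharpoonup\dot A^\star(\Sigma^\star)^{1/2}$ weakly in $L^2$ and then invokes weak lower semicontinuity of the standard $L^2$ norm; the weak convergence is obtained by decomposing $\dot A^k(\Sigma^k)^{1/2}-\dot A^\star(\Sigma^\star)^{1/2}$ and appealing to the Lipschitz continuity of the matrix square root on a set of uniformly positive-definite matrices. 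You instead decompose the \emph{weight} as $\Sigma^k=\Sigma^\star+(\Sigma^k-\Sigma^\star)$: the cross term is killed by $\|\Sigma^k-\Sigma^\star\|_{C^0}\to0$ together with the uniform bound on $\|\dot A^k\|_{L^2}$, while the main term is the squared norm of $\dot A^k$ in the fixed $\Sigma^\star$-weighted $L^2$ inner product, which is equivalent to the standard one thanks to $\bar cI\le\Sigma^\star\le\bar CI$ and therefore weakly lower semicontinuous along $\dot A^k\rightharpoonup\dot A^\star$. Both routes rest on the same two ingredients --- uniform two-sided bounds on $\Sigma^k$ and a uniform $H^1$ bound on $A^k$ --- but yours sidesteps the matrix-square-root Lipschitz estimate entirely and is somewhat more elementary; the paper's version has the advantage of producing an explicitly identified weakly convergent sequence, which is perhaps more transparent to generalize.
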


\begin{proof}
We first establish feasibility. Consider the path 
    \begin{align*}
        \Sigma_t=(1-t/\T)\Sigma_\init+(t/\T)\Sigma_\fin,~t\in[0,\T],
    \end{align*}
    induced by the control 
    \begin{align*}
        A_t=\frac{1}{2}(\dot{\Sigma}_t-BB^\top)\Sigma_t^{-1}.
    \end{align*}
The pair $(A,\Sigma)$ is feasible, since  $A$,  $\Sigma$, and
    \begin{align*}
        \dot{\Sigma}&=(\Sigma_\fin-\Sigma_\init)/\T,\\
        \dot{A}&=-(\dot{\Sigma}-BB^\top)\Sigma^{-1}\dot{\Sigma}\Sigma^{-1}/2,
    \end{align*}
    are continuous functions of time. Thus, the infimum of $J_\alpha\geq 0$ over the feasible set, denoted by $\inf J_\alpha$, exists and satisfies
    \begin{align*}
        0\leq \inf J_\alpha \leq J_\alpha(A,\Sigma) <\infty. 
    \end{align*}
 
%We now work with a minimizing sequence and establish coercivity of the functional and the existence of a weakly convergent subsequence.
By definition of the infimum, there exists a sequence $(A^k,\Sigma^k)\in\mathcal{H}$, $k\in \mathbb{N}$, where $\mathbb{N}$ denotes the natural numbers (excluding $0$), satisfying (\ref{eq:constr1}-\ref{eq:constr2}) with
  \begin{align}\nonumber
\lim_{k\rightarrow\infty}J_\alpha(A^k,\Sigma^k)&=\inf J_\alpha,\\\label{eq:M}
      J_\alpha(A^k,\Sigma^k)&\leq \inf J_\alpha +\frac{1}{k}=:M_k<\infty.
  \end{align}
  Using the Cauchy-Schwarz inequality, $\|A^k\|_{L^1}\leq \sqrt{\T}  \|A^k\|_{L^2}$, and combining with \eqref{eq:M}, we have that
  \begin{align}\label{eq:inequalities}
    \|A^k\|_{L^1}\leq \sqrt{\T}  \|A^k\|_{L^2}\leq \sqrt{\frac{\T M_k}{\alpha}}.
  \end{align}
 Now, let $\Phi^k(t,s)$ be the unique solution of the matrix ODE $\partial_t{\Phi}^k(t,s)=A^k\Phi^k(t,s)$, with $\Phi^k(s,s)=I$. This is absolutely continuous and satisfies the standard matrix inequalities 
 \begin{align*}
     \|\Phi^k(t,s)\|&\leq 1+\int_s^t\|A^k_u\|\|\Phi^k(u,s)\|\, \text{d}u,\\
     \|\Phi^k(t,s)^{-1}\|&\leq 1+\int_s^t\|A^k_u\|\|\Phi^k(u,s)^{-1}\|\, \text{d}u,
 \end{align*} 
for $s,t\in[0,\T]$. By Gr\"onwall's inequality, the maximum and minimum singular values of $\Phi^k(t,s)$ are bounded, as
  \begin{align*}
      \sigma_{\max}(\Phi^k(t,s))\!:=& \|\Phi^k(t,s)\|\leq e^{\int_s^t\|A^k_u\|\, \text{d}u}\leq e^{\sqrt{\frac{\T M_k}{\alpha}}},\\
       \sigma_{\min}(\Phi^k(t,s))\!:=&\|\Phi^k(t,s)^{-1}\|^{-1}\geq e^{-\int_s^t\|A^k_u\|\, \text{d}u}\geq e^{-\sqrt{\frac{\T M_k}{\alpha}}}\!\!,
  \end{align*}
  where the rightmost inequalities follow from $\|\cdot\|\leq \|\cdot\|_F$, and then \eqref{eq:inequalities}. Note also that $M_k<M_1$ giving bounds independent of $k$.
  It follows that the minimal and maximal eigenvalues of 
  \begin{align*}
\Sigma_t^k=\Phi^k(t,0)\Sigma_\init\Phi^k(t,0)^\top+\int_0^t\Phi^k(t,s)BB^\top\Phi^k(t,s)^\top\, \text{d}s,
  \end{align*}
  are bounded (via their Rayleigh quotients) by
  \begin{align*}
      \lambda_{\min}(\Sigma_t^k)&\geq\sigma_{\min}(\Phi^k(t,0))^2 \lambda_{\min}(\Sigma_\init)\\
      &\geq e^{-2\sqrt{\frac{\T M_k}{\alpha}}}\lambda_{\min}(\Sigma_\init)=:c_k>0, \mbox{ and}\\
      \lambda_{\max}(\Sigma_t^k)&\leq \sigma_{\max}(\Phi^k(t,0))^2 \lambda_{\max}(\Sigma_\init)\\
      &\hspace*{-12pt}+\sigma_{\max}(B)^2\int_0^t\sigma_{\max}(\Phi^k(t,s))^2\, \text{d}s\\
      &\hspace*{-12pt}\leq e^{2\sqrt{\frac{\T M_k}{\alpha}}}(\lambda_{\max}(\Sigma_\init)+\T \sigma_{\max}(B)^2)=:C_k <\infty.
  \end{align*}
Thus, we obtain the uniform bounds 
 \begin{align}
     c_1 I< c_k I \leq \Sigma_t^k \leq C_k I < C_1 I,\label{eq:sbounds}
 \end{align}
 for $t\in[0,\T]$ and $k\in\mathbb{N}$. 
It follows, 
\begin{align*}
    J_\alpha(A^k,\Sigma^k)\geq \alpha\int_0^\T\|A_t^k\|_F^2\, \text{d}t+(1-\alpha)c_1\int_0^\T\|\dot{A}_t^k\|_F^2\, \text{d}t.
\end{align*}
Hence, $(A^k,\Sigma^k)$ is uniformly bounded in $\mathcal{H}$ since 
\begin{align}
\|A^k\|^2_{H^1}&=\|A^k\|^2_{L^2}+\|\dot{A}^k\|^2_{L^2}\leq \frac{M_1}{\min(\alpha,(1-\alpha)c_1)},\\\nonumber
    \|\Sigma^k\|^2_{H^1}&=\|\Sigma^k\|^2_{L^2}+\|\dot{\Sigma}^k\|^2_{L^2}\\\nonumber
    &\leq n\T C_k^2+(2\sqrt{n}C_k\|A^k\|_{L^2}+\sqrt{\T}\|BB^\top\|_F)^2\\\nonumber
    &\leq  n\T C_1^2+(2\sqrt{n}C_1\sqrt{\frac{M_1}{\alpha}}+\sqrt{\T}\|BB^\top\|_F)^2,
\end{align}
using \eqref{eq:constr1} and the sub-multiplicative property of the Frobenius norm to bound $\|\dot{\Sigma}^k\|^2_{L^2}$. Since $\mathcal H$ is reflexive and $(A^k,\Sigma^k)$ is bounded in $\mathcal H$, the {\em Banach-Alaoglu theorem} \cite[Ch.\ 3]{Rudin1991FA} implies that there exists a subsequence (not relabeled) and a limit $(A^\star,\Sigma^\star)\in\mathcal H$
such that $(A^k,\Sigma^k)\rightharpoonup(A^\star,\Sigma^\star)$ in $\mathcal H$.

We now appeal to the {\em Rellich–Kondrachov compactness theorem} \cite{evans2022partial}, which states that
 $H^1([0,\T])$ is compactly embedded in $C^0([0,\T])$. Thus, we pass to another subsequence (again keeping the index $k$ for simplicity), so that $(A^k,\Sigma^k)$ converges also to $(A^\star,\Sigma^\star)$  uniformly (uniform convergence implies strong convergence in $L^2$; since the same subsequence
converges weakly in $H^1$ and $H^1\hookrightarrow L^2$ continuously, the $L^2$–limit
is unique and coincides with the weak $H^1$–limit).
 To recap, $(A^\star,\Sigma^\star)\in\mathcal{H}$ is such that 
\begin{align*}
    &(A^k,\Sigma^k)\rightharpoonup (A^\star,\Sigma^\star)\text{ in } H^1([0,\T];\mathbb{R}^{n\times n}\times\mathbb{S}^{n}),\\
    &(A^k,\Sigma^k)\rightarrow(A^\star,\Sigma^\star)\text{ in } 
     C^0([0,\T];\mathbb{R}^{n\times n}\times\mathbb{S}^{n}).
\end{align*}

Next, we show that the limit is feasible.
The uniform convergence implies that $\Sigma^\star$ satisfies the endpoint constraints \eqref{eq:constr2}. Moreover, considering
    \begin{align*}
\dot{\Sigma}^k&=A^k\Sigma^k+\Sigma^k{(A^k)}^\top+BB^\top~\text{a.e. on }[0,\T],
\end{align*}
the right hand side converges to $A_t^\star\Sigma_t^\star+\Sigma_t^\star{(A_t^\star)}^\top+BB^\top$ uniformly\footnote{Since $A^k\to A^\star$ and $\Sigma^k\to\Sigma^\star$ in $C^0$ and are uniformly bounded,
we have $A^k\Sigma^k+\Sigma^k A^{k\top}\to A^\star\Sigma^\star+\Sigma^\star A^{\star\top}$ in $C^0$.}, while the left hand side converges weakly in $L^2$ (since $\Sigma^k\rightharpoonup \Sigma^\star$ in $H^1$). By the uniqueness of the limit in $L^2$, \eqref{eq:constr1} holds a.e.\ in $[0,\T]$ for the limit. Thus, $(A^\star,\Sigma^\star)$ is feasible.

Finally, we show that $(A^\star,\Sigma^\star)$ minimizes \eqref{eq:J}. To this end, we show that $J_\alpha$ satisfies
 \begin{align}\label{eq:semicontinuity}
\liminf_{k\rightarrow\infty} J_\alpha(A^k,\Sigma^k)\geq J_\alpha(A^\star,\Sigma^\star),
 \end{align}
from which the desired statement $J_\alpha(A^\star,\Sigma^\star)=\inf J_\alpha$ readily follows.
Recall that the norm-squared in any Hilbert space is weakly lower semicontinuous, thus it is sufficient to show that both $A^k\rightharpoonup A^\star$ and $\dot{A}_t^k(\Sigma_t^k)^{1/2}\rightharpoonup \dot{A}^\star_t(\Sigma^\star_t)^{1/2}$ in $L^2([0,\T])$. The former is immediate since we know that $A^k\rightharpoonup A^\star\text{ in } H^1([0,\T])\subset L^2([0,\T])$. For the latter, we first observe that
for any $ Z\in L^2([0,\T])$,
\begin{align*}
     |\langle Z,\dot{A}^k(\Sigma^k)^{1/2}&-\dot{A}^\star(\Sigma^\star)^{1/2}\rangle_{L^2}|\\
       & \leq |\langle Z,\dot{A}^k((\Sigma^k)^{1/2}-(\Sigma^\star)^{1/2})\rangle_{L^2}|\\
     & \hspace{10pt} +|\langle Z,(\dot{A}^k-\dot{A}^\star)(\Sigma^\star)^{1/2}\rangle_{L^2}|\\
     &\leq \|Z\|_{L^2}\|\dot{A}^k\|_{L^2}\|(\Sigma^k)^{1/2}-(\Sigma^\star)^{1/2}\|_{C^0}\\
     &\hspace*{10pt}+|\langle Z(\Sigma^\star)^{1/2},(\dot{A}^k-\dot{A}^\star)\rangle_{L^2}|.
\end{align*}

The right-hand side tends to $0$ by the Lipschitz continuity of the matrix square root and the fact that $\dot{A}^k\rightharpoonup\dot{A}^\star$ in $L^2$. Note that 
$A^k$ is uniformly bounded in $H^1\subset L^2$, 
$Z(\Sigma^\star)^{1/2}\in L^2$, $\Sigma^k\rightarrow\Sigma^\star\text{ in }C^0$, while
$\Sigma^\star$ is uniformly bounded below, as we detail next by showing that the two-sided spectral bounds \eqref{eq:sbounds} hold for $\Sigma^\star$. Using Weyl's inequalities \cite[p.\ 63]{bhatia2013matrix},
\begin{align*}
    |\lambda_{\max}(\Sigma_t^k)-\lambda_{\max}(\Sigma_t^\star)|\leq \|\Sigma_t^k-\Sigma_t^\star\|,\\
    |\lambda_{\min}(\Sigma_t^k)-\lambda_{\min}(\Sigma_t^\star)|\leq \|\Sigma_t^k-\Sigma_t^\star\|.
\end{align*}
Thus, 
\begin{align*}
    \lambda_{\max}(\Sigma_t^\star)&\leq \lambda_{\max}(\Sigma_t^k)+\|\Sigma_t^k-\Sigma_t^\star\|\\
    &\leq C_k+\|\Sigma^k-\Sigma^\star\|_{C^0},\\
     \lambda_{\min}(\Sigma_t^\star)&\geq \lambda_{\min}(\Sigma_t^k)-\|\Sigma_t^k-\Sigma_t^\star\|\\
    &\geq c_k-\|\Sigma^k-\Sigma^\star\|_{C^0}.
\end{align*}
By the uniform convergence of $\Sigma^k$, the spectral bounds pass to the limit, so that 
 %\begin{align}
     $c_1 I< c I \leq \Sigma_t^\star \leq C I < C_1 I$, %\label{eq:unifbounds}
 %\end{align}
 for $t\in[0,\T]$ and $k\in\mathbb{N}$, with 
 $c:=\lim_{k\rightarrow\infty}c_k$ and $C:=\lim_{k\rightarrow\infty}C_k$ as claimed in the theorem. This completes the proof.
\end{proof}

\begin{remark}[Relaxed bounds]
Utilizing any admissible value for the attention functional $J_\alpha$ in the uniform bounds of optimal trajectories $\Sigma^\star$, instead of the optimal value as in the theorem, yields conservative bounds that can be computed.
\end{remark}
%
%\begin{remark}[Regularity of the minimizer] \label{rem1}     Up to this point, by Theorem~\ref{eq:thm}, any  minimizer $(A^\star,\Sigma^\star)$ is in $\mathcal{H}$, i.e., both $A^\star$ and $\Sigma^\star$ are absolutely continuous with square-integrable classical derivatives that exist almost everywhere. By the {\em Morrey embedding} of $H^1$ into $C^{0,1/2}$ \cite{evans2022partial}, both $A^\star$ and $\Sigma^\star$ are also H\"older-$1/2$ continuous in $t$. Thus, since $\dot{\Sigma}^\star=A^\star\Sigma^\star+\Sigma^\star (A^\star)^\top+BB^\top$,  it is also H\"older-$1/2$ continuous in $t$. As a result, the regularity for $(A^\star,\Sigma^\star)$ guaranteed by the direct method so far is 
% \begin{align*}
%     A^\star \in H^1\cap C^{0,1/2},~~\Sigma^\star \in H^1\cap C^{1,1/2},
% \end{align*}
% where $C^{1,1/2}$ denotes the space of $1/2$-H\"older continuously differentiable functions. Although it is not possible to upgrade regularity any further from the direct method without adding higher-order penalties for the cost, we shall see next that the first-order necessary conditions satisfied by minima via the Lagrange multiplier theorem will enable us to upgrade the regularity further (see Theorem~\ref{thm:reg-upgrade}). \hfill $\Box$
% \end{remark}

\subsection{First-order necessary conditions (FONC)}

We recall first the Lagrange multiplier theorem for Hilbert spaces\footnote{The theorem applies more generally, to Banach spaces and local minimizers within a feasible neighborhood.} \cite{BonnansShapiro2000,Troeltzsch2010}, and then
derive first-order necessary conditions that must be satisfied by the minimizer of Problem~\ref{problem:main-min-attention}. 

\begin{thm}[Lagrange multiplier in Hilbert spaces]\label{thm:hilbert-kkt}
Let $X,Y$ be real Hilbert spaces and let
\[
J:X\to\mathbb R,\qquad \mathcal C:X\to Y,
\]
be Fr\'echet $C^1$ maps. Suppose $x^\star\in X$ satisfies $\mathcal C(x^\star)=0$, $J(x^\star)\leq J(x),~\forall x\in X~\text{ with }~\mathcal{C}(x)=0$, and suppose that the Fr\'echet derivative of $\mathcal{C}$ at $x^\star$, $D_{x}\mathcal C(x^\star):X\to Y$, is surjective. Then, there exists a multiplier $\lambda^\star\in Y$ such that $D_x\mathcal L(x^\star,\lambda^\star)=0$ where
$\mathcal L(x,\lambda):=J(x)+\langle \lambda,\mathcal C(x)\rangle_Y$.
\end{thm}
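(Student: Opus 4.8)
The plan is to reduce the statement to two classical ingredients. First, that at the constrained minimizer the functional derivative $D_xJ(x^\star)$ must annihilate $N:=\ker D_x\mathcal C(x^\star)$. Second, that in a Hilbert space this kernel condition is equivalent to $D_xJ(x^\star)$ lying in the range of the adjoint $L^\ast$, where $L:=D_x\mathcal C(x^\star)\in\mathcal B(X,Y)$; after a Riesz identification of $X$ with $X^\ast$, this is exactly the stationarity of $\mathcal L(\cdot,\lambda^\star)$ for a suitable multiplier, and the sign of $\lambda^\star$ is chosen to match the convention $\mathcal L(x,\lambda)=J(x)+\langle\lambda,\mathcal C(x)\rangle_Y$.

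For the first ingredient I would prove a Lyusternik-type tangency result: for every $h\in N$ there is, for small $t\ge 0$, a point $x(t)=x^\star+th+z(t)$ with $\mathcal C(x(t))=0$ and $\|z(t)\|_X=o(t)$. To build $z(t)$, use that surjectivity of $L$ yields a bounded right inverse $R\in\mathcal B(Y,X)$, $LR=I_Y$; concretely $R=L^\ast(LL^\ast)^{-1}$, with $LL^\ast$ boundedly invertible on $Y$ since surjectivity of $L$ makes $L^\ast$ bounded below. Writing $\mathcal C(x^\star+v)=Lv+\rho(v)$ with $\rho(v)=o(\|v\|_X)$ as $v\to0$, the equation $\mathcal C(x^\star+th+z)=0$ becomes (using $\mathcal C(x^\star)=0$ and $Lh=0$) the fixed-point problem $z=-R\,\rho(th+z)$; since $\mathcal C$ is $C^1$, $\rho$ is Lipschitz with arbitrarily small constant on a small ball about $0$, so this map is a contraction there for $t$ small, giving a fixed point $z(t)$ with $\|z(t)\|_X\le\|R\|\,\|\rho(th+z(t))\|_Y=o(t)$. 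Then $t\mapsto J(x(t))$ has a minimum at $t=0$ because $x(t)$ is feasible, and Fréchet differentiability together with boundedness of $D_xJ(x^\star)$ and $z(t)=o(t)$ gives $J(x(t))-J(x^\star)=t\,D_xJ(x^\star)(h)+o(t)$; dividing by $t>0$ and sending $t\downarrow0$ yields $D_xJ(x^\star)(h)\ge0$, and the same applied to $-h\in N$ forces $D_xJ(x^\star)(h)=0$.

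For the second ingredient, let $g\in X$ denote the Riesz representative of $D_xJ(x^\star)\in X^\ast$; the previous step says $g\in N^\perp$. Since $N$ is closed, $X=N\oplus N^\perp$, and the operator $P:=L^\ast(LL^\ast)^{-1}L$ is a self-adjoint idempotent with $\ker P=N$ and $\operatorname{range}(P)=\operatorname{range}(L^\ast)$, so $\operatorname{range}(L^\ast)$ is closed and $N^\perp=\operatorname{range}(L^\ast)$ (equivalently, this is the closed-range theorem applied to the surjective $L$). Hence $g=L^\ast\mu$ for some $\mu\in Y$. Put $\lambda^\star:=-\mu$. Then for every $h\in X$, $D_x\mathcal L(x^\star,\lambda^\star)(h)=\langle g,h\rangle_X+\langle\lambda^\star,Lh\rangle_Y=\langle L^\ast(\mu+\lambda^\star),h\rangle_X=0$, i.e.\ $D_x\mathcal L(x^\star,\lambda^\star)=0$, which is the assertion.

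The main obstacle is the tangency step: producing a genuinely feasible curve with prescribed tangent direction $h$ and $o(t)$ remainder is an implicit-function/contraction argument rather than soft analysis. Two points to watch: the contraction estimate must hold uniformly in $t$ on a fixed small ball, which is where the $C^1$ hypothesis (continuity of $D_x\mathcal C$ near $x^\star$) is used, not merely differentiability at the single point $x^\star$; and the identity $N^\perp=\operatorname{range}(L^\ast)$ genuinely needs surjectivity of $L$, since for $L$ with merely dense (non-closed) range one only gets $\overline{\operatorname{range}(L^\ast)}=N^\perp$, and the multiplier may fail to exist in $Y$.
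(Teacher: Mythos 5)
The paper states Theorem~\ref{thm:hilbert-kkt} as a \emph{recalled} known result, citing \cite{BonnansShapiro2000,Troeltzsch2010}, and does not include a proof of its own, so there is nothing in the paper to compare your argument against line by line. Your blind proof is correct and complete, and it is essentially the standard Lyusternik--Graves route one finds in those references: (i)~use surjectivity of $L=D_x\mathcal C(x^\star)$ to manufacture a bounded right inverse $R=L^\ast(LL^\ast)^{-1}$ and, via a contraction on a small ball (where the $C^1$ hypothesis supplies a uniformly small Lipschitz constant for the remainder $\rho$), produce a feasible curve $x(t)=x^\star+th+z(t)$ with $z(t)=o(t)$ for each $h\in\ker L$; comparing $J$ along this curve at a constrained minimum yields $D_xJ(x^\star)\big|_{\ker L}=0$; (ii)~identify $(\ker L)^\perp=\operatorname{range}(L^\ast)$ via the orthogonal projector $P=L^\ast(LL^\ast)^{-1}L$ (this is exactly where surjectivity is needed, since it closes the range of $L^\ast$), write the Riesz representative $g$ of $D_xJ(x^\star)$ as $g=L^\ast\mu$, and set $\lambda^\star=-\mu$ to match the sign convention $\mathcal L=J+\langle\lambda,\mathcal C\rangle_Y$. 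The only cosmetic remark is that the tangency construction is more than is strictly necessary in the Hilbert setting: once one has the open-mapping constant for $L$, the Lyusternik correction theorem can be quoted directly, but building it by hand as you did is perfectly sound and keeps the proof self-contained.
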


\begin{thm}[First-order necessary conditions]\label{thm:KKT-aug}
Let $(A^\star,\Sigma^\star)\in\mathcal{H}$ be a  minimizer of Problem~\ref{problem:main-min-attention}. Then there exists multipliers  $\Lambda\in W^{1,1}([0,T];\mathbb{S}^n)\subset  C^0([0,T];\mathbb{S}^n)$, $M_0,M_T\in\mathbb{S}^n$, such that 
\begin{subequations}\label{eq:neccessary-Eqs}
\begin{align}
\dot\Sigma^\star&=A^\star\Sigma^\star+\Sigma^\star A^{\star\top}+BB^\top,\label{eq:primal}\\
-\dot\Lambda\;\;&=\Lambda A^\star+A^{\star\top}\Lambda-(1-\alpha)\dot A^{\star\top}\dot A^\star,\label{eq:adjoint}\\
\Lambda\Sigma^\star&=\alpha\,A^\star-(1-\alpha)\tfrac{d}{dt}(\dot A^\star\Sigma^\star),\label{eq:stationarity}\\
\dot A_0^\star&=0,\qquad \dot A_T^\star=0,\label{eq:BC-A}\\
\Lambda_0&=M_0,\phantom{xx} \Lambda_T=-M_T,\label{eq:trans}
\end{align}
\end{subequations}
where equations \eqref{eq:primal}--\eqref{eq:stationarity} hold a.e. on $[0,T]$ and  $\dot{A}^\star\Sigma^\star\in W^{1,1}([0,T];\mathbb{R}^{n\times n}) \subset C^0([0,T];\mathbb{R}^{n\times n})$.
\end{thm}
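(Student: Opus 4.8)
The plan is to apply the Lagrange multiplier theorem, Theorem~\ref{thm:hilbert-kkt}, with $X=\mathcal H$, $Y=L^2([0,\T];\mathbb{S}^n)\times\mathbb{S}^n\times\mathbb{S}^n$, $J:=J_\alpha$, and constraint map
\[
\mathcal C(A,\Sigma):=\bigl(\dot\Sigma-A\Sigma-\Sigma A^\top-BB^\top,\;\Sigma_0-\Sigma_\init,\;\Sigma_\T-\Sigma_\fin\bigr),
\]
so that $\{\mathcal C=0\}$ is precisely the feasible set of Problem~\ref{problem:main-min-attention}. First I would record that $J_\alpha$ and $\mathcal C$ are Fr\'echet $C^1$ (in fact $C^\infty$): on the interval $[0,\T]$ one has the continuous embedding $H^1\hookrightarrow C^0$, hence $(A,\Sigma)\mapsto A\Sigma$ is a bounded bilinear map $\mathcal H\to C^0\subset L^2$, the endpoint evaluations $\Sigma\mapsto(\Sigma_0,\Sigma_\T)$ are bounded linear, and $(A,\Sigma)\mapsto\int_0^\T\tr(\dot A\Sigma\dot A^\top)\,\text{d}t$ is a bounded trilinear form evaluated at $(\dot A,\Sigma,\dot A)\in L^2\times C^0\times L^2$; so both maps are polynomials in bounded multilinear maps and therefore smooth. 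Equation~\eqref{eq:primal} is then simply the feasibility $\mathcal C(A^\star,\Sigma^\star)=0$ furnished by Theorem~\ref{eq:thm}.

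The first substantive point is surjectivity of $L:=D_{(A,\Sigma)}\mathcal C(A^\star,\Sigma^\star)$, which acts by $L(a,S)=\bigl(\dot S-A^\star S-S A^{\star\top}-a\Sigma^\star-\Sigma^\star a^\top,\,S_0,\,S_\T\bigr)$. Given a target $(R,P_0,P_\T)\in Y$, I would proceed in two steps. (i) Solve the linear Lyapunov ODE $\dot S^{(1)}=A^\star S^{(1)}+S^{(1)}A^{\star\top}+R$, $S^{(1)}_0=P_0$; since $A^\star\in H^1\hookrightarrow C^0$ and $R\in L^2$, the solution $S^{(1)}$ is $\mathbb{S}^n$-valued, absolutely continuous with $L^2$ derivative, hence $S^{(1)}\in H^1$. (ii) Choose a smooth $\mathbb{S}^n$-valued path $\gamma$ with $\gamma_0=0$ and $\gamma_\T=P_\T-S^{(1)}_\T$, set $u:=\dot\gamma-A^\star\gamma-\gamma A^{\star\top}$ (which lies in $H^1$ since $\gamma$ is smooth and $A^\star\in H^1$) and $a:=\tfrac12\,u(\Sigma^\star)^{-1}$. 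By Theorem~\ref{eq:thm} the path $\Sigma^\star$ stays in a compact subset of $\mathbb{S}_{+}^n$, so $(\Sigma^\star)^{-1}\in H^1\subset C^0$ (matrix inversion being smooth on the range of $\Sigma^\star$) and thus $a\in H^1$; since $u$ is symmetric, $a\Sigma^\star+\Sigma^\star a^\top=u$. Then $(a,\,S^{(1)}+\gamma)\in\mathcal H$ and $L(a,\,S^{(1)}+\gamma)=(R,P_0,P_\T)$. I expect this to be the main obstacle: the naive pointwise inversion $a=\tfrac12(\dot S-A^\star S-S A^{\star\top}-R)(\Sigma^\star)^{-1}$ does not return an $H^1$ control because $R$ is merely $L^2$; the construction above circumvents this by first absorbing $R$ into the $H^1$ trajectory $S^{(1)}$ through the (smoothing) solution operator of the Lyapunov equation, and only then closing the residual endpoint gap with a smooth control.

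With surjectivity in hand, Theorem~\ref{thm:hilbert-kkt} supplies a multiplier $(\Lambda,M_0,M_\T)\in Y$ (so a priori $\Lambda\in L^2([0,\T];\mathbb{S}^n)$) with $D_{(A,\Sigma)}\mathcal L=0$, where $\mathcal L:=J_\alpha+\langle\Lambda,\dot\Sigma-A\Sigma-\Sigma A^\top-BB^\top\rangle_{L^2}+\langle M_0,\Sigma_0-\Sigma_\init\rangle_F+\langle M_\T,\Sigma_\T-\Sigma_\fin\rangle_F$. Varying $A$ along $a\in H^1$ and using trace cyclicity and the symmetry of $\Lambda,\Sigma^\star$ gives
\[
\int_0^\T\langle\alpha A^\star-\Lambda\Sigma^\star,\,a\rangle_F\,\text{d}t+(1-\alpha)\int_0^\T\langle\dot A^\star\Sigma^\star,\,\dot a\rangle_F\,\text{d}t=0 .
\]
Testing against $a\in C_c^\infty((0,\T);\mathbb{R}^{n\times n})$ shows that $\dot A^\star\Sigma^\star$ has weak derivative $\tfrac{1}{1-\alpha}(\alpha A^\star-\Lambda\Sigma^\star)\in L^2$, so $\dot A^\star\Sigma^\star\in H^1\subset W^{1,1}\subset C^0$; rearranging this is exactly~\eqref{eq:stationarity}, and then $\dot A^\star=(\dot A^\star\Sigma^\star)(\Sigma^\star)^{-1}\in C^0$ is the advertised regularity upgrade. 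Testing next against a general $a\in H^1$, integrating by parts, and cancelling the bulk term via~\eqref{eq:stationarity} leaves $(1-\alpha)\bigl(\langle(\dot A^\star\Sigma^\star)_\T,a_\T\rangle_F-\langle(\dot A^\star\Sigma^\star)_0,a_0\rangle_F\bigr)=0$ for all $a$, whence $(\dot A^\star\Sigma^\star)_0=(\dot A^\star\Sigma^\star)_\T=0$, and invertibility of $\Sigma^\star_0,\Sigma^\star_\T$ yields~\eqref{eq:BC-A}.

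Finally, varying $\Sigma$ along $S\in H^1([0,\T];\mathbb{S}^n)$ gives
\[
(1-\alpha)\!\int_0^\T\!\langle\dot A^{\star\top}\dot A^\star,S\rangle_F\,\text{d}t+\!\int_0^\T\!\langle\Lambda,\dot S-A^\star S-S A^{\star\top}\rangle_F\,\text{d}t+\langle M_0,S_0\rangle_F+\langle M_\T,S_\T\rangle_F=0 .
\]
Testing against $S\in C_c^\infty((0,\T);\mathbb{S}^n)$ identifies the weak derivative of $\Lambda$ and produces the adjoint equation~\eqref{eq:adjoint}; since its right-hand side $\Lambda A^\star+A^{\star\top}\Lambda-(1-\alpha)\dot A^{\star\top}\dot A^\star$ is in $L^2$ (now using $\dot A^\star\in C^0$), we get $\Lambda\in H^1\subset W^{1,1}\subset C^0$. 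Testing against a general $S\in H^1$, integrating $\int\langle\Lambda,\dot S\rangle_F$ by parts, and cancelling the bulk via~\eqref{eq:adjoint} leaves $\langle\Lambda_\T+M_\T,S_\T\rangle_F+\langle M_0-\Lambda_0,S_0\rangle_F=0$ for all $S$, which is~\eqref{eq:trans}. Collecting~\eqref{eq:primal}--\eqref{eq:trans} together with the regularity statements obtained along the way completes the proof.
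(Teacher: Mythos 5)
Your proof is correct and follows essentially the same route as the paper: apply the Hilbert-space Lagrange multiplier theorem with the same $X$, $Y$, and constraint map $\mathcal C$; verify Fr\'echet differentiability; establish surjectivity of $D\mathcal C$ by first absorbing the $L^2$ right-hand side into the trajectory via the Lyapunov solution operator and then closing the endpoint gap with a smooth correction; and read off the Euler--Lagrange system from the two partial variations together with a bootstrap on the multiplier's regularity.

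A few worthwhile differences in presentation: you prove $C^1$-ness of $J_\alpha$ and $\mathcal C$ directly from their multilinear structure (bounded bi- and trilinear forms composed with the embedding $H^1\hookrightarrow C^0$), whereas the paper argues local Lipschitz continuity of the Fr\'echet derivative by hand; both are fine. Your surjectivity construction parameterizes the endpoint correction by an arbitrary smooth path $\gamma$, while the paper exhibits a specific $F_s=\tfrac1T\Phi(s,T)\Delta\Phi(s,T)^\top$; your explicit warning that the naive pointwise inversion $a=\tfrac12(\dot S-A^\star S-SA^{\star\top}-R)(\Sigma^\star)^{-1}$ fails to land in $H^1$ because $R$ is only $L^2$ is exactly the pitfall the paper's two-step construction is designed to avoid, and you correctly circumvent it the same way. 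Finally, you vary $A$ before $\Sigma$ (the paper does the reverse); since the $A$-variation already yields $\dot A^\star\Sigma^\star\in H^1$ and hence $\dot A^\star\in C^0$, your subsequent $\Sigma$-variation lets you upgrade $\Lambda$ directly to $H^1\subset W^{1,1}$, a slightly sharper conclusion at this stage than the paper's $W^{1,1}$ (the paper recovers the extra regularity in the separate Lemma~\ref{thm:reg-upgrade}). All of this is consistent with, and implies, the stated regularity $\Lambda\in W^{1,1}$ and $\dot A^\star\Sigma^\star\in W^{1,1}$.
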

\begin{proof}
    Consider the Hilbert spaces
    \begin{align*}
        X:=\mathcal{H},\qquad Y:=L^2([0,T];\mathbb{S}^n)\oplus\mathbb{S}^n\oplus\mathbb{S}^n,
    \end{align*}
    the functional $J_\alpha:X\rightarrow\mathbb{R}$  in \eqref{eq:J} and the map $\mathcal C:X\to Y$, 
 \[
\mathcal C(A,\Sigma):=(\dot\Sigma-A\Sigma-\Sigma A^\top-BB^\top,\Sigma_0-\Sigma_\init, 
\Sigma_T-\Sigma_\fin).
\]
We verify that the Fr\'echet derivatives of $J_\alpha$ and $\mathcal{C}$ given by 
\begin{align*}
      D_xJ_\alpha(A,\Sigma)[H,K]&=2\alpha\langle A,H\rangle_{L^2}+2(1-\alpha)\langle \dot{A}\Sigma,\dot{H}\rangle_{L^2}\\
      &+(1-\alpha)\langle \dot{A}^\top\dot{A},K\rangle_{L^2},
\end{align*}
    \[
    D_x\mathcal{C}(A,\Sigma)[H,K]=(\dot{K}-AK-KA^\top-H\Sigma-\Sigma H^\top,K_0,K_T),
\]
 are $C^1$ maps\footnote{One can check that, at any $(A,\Sigma)\in X$, the maps $D_xJ_\alpha(A,\Sigma):X\rightarrow \mathbb{R}$ and $D_x\mathcal{C}(A,\Sigma):X\rightarrow Y$ are well-defined, linear, bounded, and satisfy \eqref{def:frechet}.}.
To this end, consider a pair of points $(A,\Sigma)$ and $(A^{'},\Sigma^{'})$ in a neighborhood $U\subset X$ and any unit direction $(H,K)$ in $X$. Then,
\begin{align*}
    |(D_xJ_\alpha(A,\Sigma)-D_xJ_\alpha(A^{'},\Sigma^{'}))[H,K]|\leq T_1+T_2+T_3,
\end{align*}
where (using the continuous embedding $H^1(0,T)\hookrightarrow C^0([0,T])$,
with $\|f\|_{C^0}\le C_T\|f\|_{H^1}$ and the {\em Cauchy-Schwarz inequality}), 
\begin{align*}
    T_1 &= 2\alpha|\langle A-A^{'},H\rangle_{L^2}|\leq c_1\|A-A^{'}\|_{H^1}.\\
    T_2&=2(1-\alpha)|\langle \dot{A}\Sigma-\dot{A}^{'}\Sigma^{'},\dot{H}\rangle_{L^2}|\\
    &\leq 2(1-\alpha)(\|(\dot{A}-\dot{A}^{'})\Sigma\|_{L^2}+\|\dot{A}^{'}(\Sigma-\Sigma^{'})\|_{L^2})\\
    &\leq 2(1-\alpha)(\|A-A^{'}\|_{H^1}\|\Sigma\|_{C^0}+\|\Sigma-\Sigma^{'}\|_{C^0}\|A^{'}\|_{H^1})\\
    &\leq c_2(\|A-A^{'}\|_{H^1}+\|\Sigma-\Sigma^{'}\|_{H^1}),\\
    T_3&=(1-\alpha)|\langle \dot{A}^\top\dot{A}-\dot{A}^{{'}\top}\dot{A}^{'},K\rangle_{L^2}|\\
    &\leq (1-\alpha)c(\|(\dot{A}-\dot{A}^{'})^\top\dot{A}\|_{L^1}+\|\dot{A}^{'\top}(\dot{A}-\dot{A}^{'})\|_{L^1})\\
    &\leq c_3 \|A-A^{'}\|_{H^1},
\end{align*}
where the positive constants $c_1$, $c_2$, and $c_3$, depend only on $T$ and $U$\footnote{Also note that on $[0,T]$, the continuous embedding $L^2(0,T)\hookrightarrow L^1(0,T)$ holds:
$\|f\|_{L^1}\le \sqrt{T}\,\|f\|_{L^2}$.
}. Taking supremum over all unit vectors $(H,K)$,
\begin{align*}
   & \|D_xJ_\alpha(A,\Sigma)-D_xJ_\alpha(A^{'},\Sigma^{'})\|_{\mathcal{B}(X,\mathbb{R})}\\
    &\leq c_{T,U}\|(A-A^{'},\Sigma-\Sigma^{'})\|_{X}.
\end{align*}

Thus, the Fr\'echet derivative $D_xJ_\alpha: X\rightarrow \mathcal{B}(X,\mathbb{R})$ is locally Lipschitz, and hence $J_\alpha$ is $C^1$. Moving on to $\mathcal{C}$,
\[\|(D_x\mathcal{C}(A,\Sigma)-D_x\mathcal{C}(A^{'},\Sigma^{'}))[H,K]\|_{Y}\leq S_1+S_2,
\]
where 
\begin{align*}
    S_1&=2\|H(\Sigma-\Sigma^{'})\|_{L^2}\\
    &\leq 2\|H\|_{H^1}\|\Sigma-\Sigma^{'}\|_{C^0}\leq c_T\|\Sigma-\Sigma^{'}\|_{H^1},\\
    S_2 &= 2\|(A-A^{'})K\|_{L^2}\\
    &\leq 2\|K\|_{L^2}\|A-A^{'}\|_{C^0}\leq c_T\|A-A^{'}\|_{H^1},
\end{align*}
where $c_T$ is a positive constant depending only on $T$. Taking supremum over all unit vectors $(H,K)$ yields 
\begin{align*}
    &\|D_x\mathcal{C}(A,\Sigma)-D_x\mathcal{C}(A^{'},\Sigma^{'})\|_{\mathcal{B}(X,Y)}\\
    &\leq C_T\|(A-A^{'},\Sigma-\Sigma^{'})\|_{X}.
\end{align*}
Hence, the Fr\'echet derivative $D_x\mathcal{C}:X\rightarrow\mathcal{B}(X,Y)$ is Lipschitz, which implies that $\mathcal{C}$ is also $C^1$. 

We now verify that the Fr\'echet derivative of $\mathcal{C}$ at a  minimizer $(A^\star,\Sigma^\star)\in X$ of Problem~\ref{problem:main-min-attention} is surjective. In other words, given a triple $(R,S_0,S_T)\in Y$, we seek a pair $(H,K)\in X$ such that 
\begin{align}
&\dot{K}-A^\star K-KA^{\star\top}-H\Sigma^\star-\Sigma^\star H^\top= R~\text{ a.e. on }[0,T],\nonumber\\
    &K_0=S_0,~~K_T=S_T.\label{conds2}
\end{align}
To this end, let $\Phi$ be the fundamental matrix associated with $A^\star$, and observe that the pair $(H,K)=(0,K^0)\in X$ with $K_0^0=S_0$ and
 \begin{align*}
K^0_t=\Phi(t,0)S_0\Phi(t,0)^\top+\int_0^t\Phi(t,s)R(s)\Phi(t,s)^\top\, \text{d}s,
  \end{align*}
 already satisfies \eqref{conds2} with the exception of $K_T^0$ being off from $S_T$ by some $\Delta:= K_T^0-S_T$. We correct this by finding a pair $(H^c,K^c)\in X$ that steers $K_0^c = 0$ to $K_T^c = \Delta$ so as to not affect  $S_0$, $R$ and get exactly $S_T$. That is, the correction to $(0,K^0)$ is one that satisfies
\begin{align*}
&\dot{K}^c-A^\star K-K^cA^{\star\top}=H^c\Sigma^\star+\Sigma^\star H^{c\top}~\text{ a.e. on }[0,T],\\
    &K_c=0,~~K_T=\Delta.
\end{align*}
Setting $H^c = \frac{1}{2}F(\Sigma^\star)^{-1}$ where $F_s = \frac{1}{T}\Phi(s,T)\Delta\Phi(s,T)^\top$ readily gives
 \begin{align}\label{eq:numbered}
K^c_T=\int_0^T\Phi(T,s)F_s\Phi(T,s)^\top\, \text{d}s=\Delta.
  \end{align}
 Since $A^\star\in H^1([0,T])\hookrightarrow C^0([0,T])$, the fundamental matrix
$\Phi(\cdot,T)$ solves $\dot\Phi=A^\star\Phi$ with continuous coefficients, hence
$\Phi(\cdot,T)\in C^1$ and therefore $F_s=\tfrac1T\Phi(s,T)\Delta\Phi(s,T)^\top\in C^1([0,T])$.
By Theorem~\ref{eq:thm}, $\Sigma^\star\in H^1\hookrightarrow C^0$ and
$cI\le \Sigma_t^\star\le CI$.
Hence, the inversion map is smooth on this compact SPD set, so
$(\Sigma^\star)^{-1}\in C^0$ and
\[
\frac{d}{dt}(\Sigma^\star)^{-1}
=-(\Sigma^\star)^{-1}\dot\Sigma^\star(\Sigma^\star)^{-1}\in L^2,
\]
because $(\Sigma^\star)^{-1}\in L^\infty$ and $\dot\Sigma^\star\in L^2$.
Therefore $(\Sigma^\star)^{-1}\in H^1(0,T)$ and, since $F\in C^1$,
$H^c=\tfrac12 F(\Sigma^\star)^{-1}\in H^1(0,T;\mathbb R^{n\times n})$.
  Thus, the pair we seek is $(H,K)=(H^c,K^0+K^c)\in X$ and hence the derivative of $\mathcal{C}$ at the  minimizer is surjective. 
  
  Now, by Theorem~\ref{thm:hilbert-kkt}, a multiplier $\tilde{\Lambda}=(\Lambda,M_0,M_T)\in Y$ exists such that $D_x\mathcal{L}((A^\star,\Sigma^\star),\tilde{\Lambda})=0$, where 
  \begin{align*}
      \mathcal L(x,y)=J_\alpha(x)+\langle y,\mathcal C(x) \rangle_Y,~x\in X,~y\in Y. 
  \end{align*}
In other words, for any perturbation $(\delta A,\delta\Sigma)\in X$,
\begin{align}\label{eq:Frech}
&D_xJ_\alpha(A^\star,\Sigma^\star)[\delta A,\delta\Sigma]\nonumber\\
&
+\langle (\Lambda,M_0,M_T),\ D_x{\mathcal C}(A^\star,\Sigma^\star)[\delta A,\delta\Sigma]\rangle_{Y}=0.
\end{align}
Explicitly, the inner product in $Y$ is given by 
\begin{align*}
    &\int_0^T\hspace{-2mm}\langle \Lambda_t, \delta\dot\Sigma_t-A_t^\star\delta\Sigma_t-\delta\Sigma_t A_t^{\star\top}-\delta A_t\Sigma_t^\star-\Sigma_t^\star \delta A_t^\top\rangle_F \, \text{d}t\\
    &
+\langle M_0,\delta\Sigma_0\rangle_F+\langle M_T,\delta\Sigma_T\rangle_F.
\end{align*}
Setting {$\delta A=0$} in \eqref{eq:Frech} first, we find that for any $\delta\Sigma\in H^1$, 
\begin{align}
    &(1-\alpha)\hspace{-2mm}\int_0^T\hspace{-2mm}\mathrm{tr}(\dot A_t^{\star\top}\dot A_t^\star\delta\Sigma_t)\, \text{d}t+\langle M_0,\delta\Sigma_0\rangle_F+\langle M_T,\delta\Sigma_T\rangle_F,\nonumber
\\
&+\int_0^T\langle \Lambda_t,\delta\dot\Sigma_t-A_t^\star\delta\Sigma_t-\delta\Sigma_t A_t^{\star\top}\rangle_F\, \text{d}t=0.\label{eq:IBP}
\end{align}
Choosing $\delta\Sigma\in H_0^1([0,T];\mathbb S^n)$ so that $\delta\Sigma_0=\delta\Sigma_T=0$,
\begin{align*}
    &\int_0^T\langle \Lambda_t,\delta\dot\Sigma_t-A_t^\star\delta\Sigma_t-\delta\Sigma_t A_t^{\star\top}\rangle_F\, \text{d}t\\
    &+(1-\alpha)\int_0^T\mathrm{tr}(\dot A_t^{\star\top}\dot A_t^\star\delta\Sigma_t)
\, \text{d}t=0.
\end{align*}
By defining $\dot\Lambda\in H^{-1}$ via $\langle \dot\Lambda,\phi\rangle_{H^{-1},H_0^1}=-\int_0^T\langle \Lambda_t,\dot\phi_t\rangle_F \, \text{d}t$, we get that for all $\delta\Sigma\in H_0^1$,
\[
\langle (1-\alpha)\,\dot A^{\star\top}\dot A^\star-(\Lambda A^\star+A^{\star\top}\Lambda)-\dot\Lambda , \delta\Sigma\rangle_{H^{-1},H_0^1}=0.
\]
By the fundamental lemma in the calculus of variations (i.e., testing against smooth symmetric compactly supported variations which are dense in $H_0^1$), the distribution must vanish. Thus,  
\begin{align}\label{eq:Hminus}
    -\dot\Lambda\;\;&=\Lambda A^\star+A^{\star\top}\Lambda-(1-\alpha)\dot A^{\star\top}\dot A^\star~~\text{ in }H^{-1}.
    \end{align}
  Since $A^\star\in H^1\hookrightarrow C^0\subset L^\infty$ and $\Lambda\in L^2$,
we have $\Lambda A^\star,\,A^{\star\top}\Lambda\in L^2\hookrightarrow L^1$,
and $\dot A^{\star\top}\dot A^\star\in L^1$, so that the right-hand side of \eqref{eq:Hminus} is in $L^1([0,T];\mathbb{S}^n)$. Thus $\Lambda\in W^{1,1}([0,T];\mathbb{S}^n)$, yielding \eqref{eq:adjoint}. Now, with $\Lambda\in W^{1,1}$, for any $\delta \Sigma\in H^1$, integration by parts in \eqref{eq:IBP} gives $\langle M_0-\Lambda_0,\delta\Sigma_0\rangle_F+\langle M_T+\Lambda_T,\delta\Sigma_T\rangle_F=0$, from which \eqref{eq:trans} follows.

Next, setting {$\delta\Sigma = 0$} in \eqref{eq:Frech}, we obtain for any $\delta A\in H_0^1$,
\begin{align}
    &2\alpha\int_0^T\langle A_t^\star,\delta A_t\rangle_F\, \text{d}t
+2(1-\alpha)\int_0^T\langle \dot{A}_t^\star\Sigma_t^\star, \delta\dot{ A}_t\rangle_F\, \text{d}t\nonumber
\\
&-2\int_0^T\langle \Lambda_t\Sigma_t^\star,\delta A_t\rangle_F\, \text{d}t=0.\label{eqf}
\end{align}
Again, defining $\frac{d}{dt}(\dot{A}^\star\Sigma^\star)\in H^{-1}$ in the distributional sense,  $$\langle \frac{d}{dt}(\dot{A}^\star\Sigma^\star),\phi\rangle_{H^{-1},H_0^1}=-\int_0^T\langle \dot{A}_t^\star\Sigma_t^\star,\dot\phi_t\rangle_F \, \text{d}t,$$ we obtain by the fundamental Lemma, that 
\begin{align*}
   \Lambda\Sigma^\star= \alpha A^\star-(1-\alpha)\frac{d}{dt}(\dot{A}^\star\Sigma^\star)~~\text{ in }H^{-1}.
\end{align*}
Since both $\Lambda\Sigma^\star$ and $A^\star$ are in $L^1$, so is $ \frac{d}{dt}(\dot{A}^\star\Sigma^\star)$ and therefore $\dot{A}^\star\Sigma^\star\in W^{1,1}$. Thus, the stationarity condition \eqref{eq:stationarity} holds almost everywhere. Finally, for any $\delta A\in H^1$, integration by parts in \eqref{eqf} gives 
\begin{align*}
\Big[\,2(1-\alpha)\,\langle \dot A^\star\Sigma^\star,\ \delta A\rangle_F\Big]_0^T=0,
\end{align*}
and since $\delta A_0$ and $\delta A_T$ are arbitrary and $\Sigma^\star$ in invertible, we obtain 
\eqref{eq:BC-A}. The primal equation \eqref{eq:primal} holds by feasibility. This completes the proof.
\end{proof}

Next, we take advantage of the structure in the first-order necessary conditions to upgrade the regularity of the  minimizer found in Theorem~\ref{eq:thm}.

\subsection{Regularity of minimizers}
In this section, we exploit  Theorem~\ref{thm:KKT-aug} together with the
uniform interior bounds from Theorem~\ref{eq:thm} to upgrade the regularity of any 
minimizer. We begin with an initial Sobolev lift, and then prove that minimizers are smooth in time.

\begin{lemma}[Initial regularity upgrade]\label{thm:reg-upgrade}
Let $(A^\star,\Sigma^\star)\in\mathcal{H}$ be a  minimizer of
Problem~\ref{problem:main-min-attention} and let $\Lambda^\star$ be the multiplier from
Theorem~\ref{thm:KKT-aug}. Then
\begin{enumerate}
\item[\textnormal{(i)}] $\Lambda^\star\in W^{1,1}([0,T];\mathbb S^n)\hookrightarrow C^0([0,T];\mathbb S^n)$,
\item[\textnormal{(ii)}] $A^\star\in W^{2,1}([0,T];\mathbb R^{n\times n})\hookrightarrow C^{1}([0,T];\mathbb R^{n\times n})$,
\item[\textnormal{(iii)}] $\Sigma^\star\in W^{3,1}([0,T];\mathbb S^n)\hookrightarrow C^{2}([0,T];\mathbb S^n)$,
\end{enumerate}
where $C^{k}([0,T],\cdot)$ denotes the space of $k$-times continuously differentiable functions.
\end{lemma}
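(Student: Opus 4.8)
The plan is a bootstrap argument: combine the uniform interior bound $cI\le\Sigma^\star_t\le CI$ of Theorem~\ref{eq:thm} with the stationarity identity \eqref{eq:stationarity} and the primal Lyapunov equation \eqref{eq:primal}, raising the Sobolev order one notch at a time. Item (i) is essentially a restatement of what Theorem~\ref{thm:KKT-aug} already yields: the adjoint equation \eqref{eq:adjoint} expresses $\dot\Lambda^\star$ as $\Lambda^\star A^\star+A^{\star\top}\Lambda^\star-(1-\alpha)\dot A^{\star\top}\dot A^\star$, whose first two summands lie in $L^2\hookrightarrow L^1$ (as $A^\star\in H^1\hookrightarrow L^\infty$ and $\Lambda^\star\in L^2$) and whose last summand lies in $L^1$; hence $\Lambda^\star\in W^{1,1}$, and in one dimension $W^{1,1}([0,T])$ is the space of absolutely continuous functions, which embeds continuously in $C^0([0,T])$.

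For (ii), I would first record, exactly as in the surjectivity step of Theorem~\ref{thm:KKT-aug}, that the bound $cI\le\Sigma^\star_t\le CI$ makes the inversion map smooth along $\Sigma^\star$, so $(\Sigma^\star)^{-1}\in C^0$ and $\tfrac{d}{dt}(\Sigma^\star)^{-1}=-(\Sigma^\star)^{-1}\dot\Sigma^\star(\Sigma^\star)^{-1}\in L^2$, i.e. $(\Sigma^\star)^{-1}\in H^1([0,T];\mathbb S^n)$. Then I would solve \eqref{eq:stationarity} for $\dot A^\star\Sigma^\star$ --- which Theorem~\ref{thm:KKT-aug} already places in $W^{1,1}$ --- and write $\dot A^\star=(\dot A^\star\Sigma^\star)(\Sigma^\star)^{-1}$. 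Both factors are absolutely continuous and bounded, so the product rule gives $\ddot A^\star=\tfrac{d}{dt}(\dot A^\star\Sigma^\star)\,(\Sigma^\star)^{-1}+(\dot A^\star\Sigma^\star)\,\tfrac{d}{dt}(\Sigma^\star)^{-1}$, with the first term in $L^1\cdot L^\infty\subset L^1$ and the second in $L^\infty\cdot L^2\subset L^1$. Hence $\ddot A^\star\in L^1$, i.e. $A^\star\in W^{2,1}$, and since $\dot A^\star\in W^{1,1}\hookrightarrow C^0$ we obtain $A^\star\in C^1$.

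For (iii), I would run a cascade on \eqref{eq:primal}. With $A^\star\in C^1$ and $\Sigma^\star\in C^0$ the right-hand side of $\dot\Sigma^\star=A^\star\Sigma^\star+\Sigma^\star A^{\star\top}+BB^\top$ is $C^0$, so $\Sigma^\star\in C^1$; differentiating once, $\ddot\Sigma^\star=\dot A^\star\Sigma^\star+A^\star\dot\Sigma^\star+\dot\Sigma^\star A^{\star\top}+\Sigma^\star\dot A^{\star\top}$ has a $C^0$ right-hand side, so $\Sigma^\star\in C^2$; differentiating once more and using $\ddot A^\star\in L^1$ from (ii) together with $\dot A^\star\in C^0$ and $\Sigma^\star\in C^2$, every summand of $\dddot\Sigma^\star$ is a product of an $L^1$ factor and an $L^\infty$ factor, hence $\dddot\Sigma^\star\in L^1$ and $\Sigma^\star\in W^{3,1}\hookrightarrow C^2$. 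The chain stops here precisely because at this stage $A^\star$ is known only to be $W^{2,1}$.

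The one genuinely delicate point is the product-rule step in (ii): to differentiate $\dot A^\star=(\dot A^\star\Sigma^\star)(\Sigma^\star)^{-1}$ and land in $L^1$ one must know $(\Sigma^\star)^{-1}\in H^1$, not merely $(\Sigma^\star)^{-1}\in C^0$, and this rests entirely on the uniform lower bound $\Sigma^\star\ge cI$ from Theorem~\ref{eq:thm} (without which $(\Sigma^\star)^{-1}$ need not even be bounded). Everything else is a routine sequence of H\"older estimates and the one-dimensional Sobolev embeddings $W^{k+1,1}([0,T])\hookrightarrow C^k([0,T])$.
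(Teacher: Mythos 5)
Your proof is correct and follows essentially the same three-step bootstrap as the paper: lift $\Lambda^\star$ to $W^{1,1}$ from the adjoint equation, use the stationarity identity together with the uniform spectral bounds on $\Sigma^\star$ to push $A^\star$ to $W^{2,1}$, and then differentiate the Lyapunov equation twice to reach $\Sigma^\star\in W^{3,1}$. The only presentational difference is in step (ii): the paper differentiates $f=\dot A^\star\Sigma^\star$ and then solves for $\ddot A^\star$ by multiplying with $(\Sigma^\star)^{-1}\in L^\infty$, whereas you write $\dot A^\star=f(\Sigma^\star)^{-1}$ and apply the product rule after first establishing $(\Sigma^\star)^{-1}\in H^1$ — an equivalent (and, as you note, slightly more scrupulous) route that the paper itself uses elsewhere in the surjectivity argument of Theorem~\ref{thm:KKT-aug}.
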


\begin{proof}
We proceed in three steps. Throughout, we use that on the finite interval $[0,T]$ one has
$L^2\subset L^1$ and that $H^1([0,T])\hookrightarrow C^0([0,T])$.

\smallskip\noindent
\emph{Step 1: $\Lambda^\star\in W^{1,1}$.}
The adjoint equation \eqref{eq:adjoint} holds in $H^{-1}$:
\[
-\dot\Lambda^\star
=\Lambda^\star A^\star+A^{\star\top}\Lambda^\star-(1-\alpha)\dot A^{\star\top}\dot A^\star .
\]
Since $A^\star\in H^1\hookrightarrow C^0$ and $\Lambda^\star\in L^2$,
the products $\Lambda^\star A^\star$ and $A^{\star\top}\Lambda^\star$ lie in $L^1$.
Also $\dot A^{\star\top}\dot A^\star\in L^1$ because $\dot A^\star\in L^2$.
Hence the right-hand side is in $L^1$, so $\dot\Lambda^\star\in L^1$ and thus
$\Lambda^\star\in W^{1,1}\hookrightarrow C^0$, proving (i); this is already implicit in Theorem~\ref{thm:KKT-aug}, but we record it here for completeness.

\smallskip\noindent
\emph{Step 2: $A^\star\in W^{2,1}$.}
The stationarity condition \eqref{eq:stationarity} holds in $H^{-1}$:
\begin{equation}\label{eq:stat-weak}
\Lambda^\star\Sigma^\star
= \alpha\,A^\star-(1-\alpha)\tfrac{d}{dt}\big(\dot A^\star\Sigma^\star\big).
\end{equation}
By Step~1, $\Lambda^\star\in W^{1,1}\hookrightarrow L^\infty$; also
$A^\star,\Sigma^\star\in H^1\hookrightarrow C^0$. Hence the right-hand side of
\eqref{eq:stat-weak} belongs to $L^1$, so
\[
\tfrac{d}{dt}\big(\dot A^\star\Sigma^\star\big)\in L^1([0,T];\mathbb R^{n\times n}),
~~
f:=\dot A^\star\Sigma^\star\in W^{1,1}\hookrightarrow C^0 .
\]
The uniform bounds $cI\le \Sigma^\star_t\le CI$ from Theorem~\ref{eq:thm} imply
$(\Sigma^\star)^{-1}\in L^\infty$. Differentiating the identity
$f=\dot A^\star\Sigma^\star$ in $\mathcal D'(0,T)$\footnote{The space $\mathcal D'(0,T)$ is the space of distributions, that is, the space of continuous linear functionals on the space of smooth functions with compact support.} gives
\[
\dot f=\ddot A^\star\Sigma^\star+\dot A^\star\dot\Sigma^\star\in L^1,
\]
where $\dot A^\star,\dot\Sigma^\star\in L^2\subset L^1$.
Multiplying by $(\Sigma^\star)^{-1}$ yields
\[
\ddot A^\star = \big(\dot f-\dot A^\star\dot\Sigma^\star\big)(\Sigma^\star)^{-1}\in L^1.
\]
Therefore, $A^\star\in W^{2,1}\hookrightarrow C^1$, proving (ii).

\smallskip\noindent
\emph{Step 3: $\Sigma^\star\in W^{3,1}$.}
Differentiate the Lyapunov equation \eqref{eq:primal} in $\mathcal D'(0,T)$:
\[
\ddot\Sigma^\star
= \dot A^\star\Sigma^\star
+ A^\star\dot\Sigma^\star
+ \dot\Sigma^\star A^{\star\top}
+ \Sigma^\star\dot A^{\star\top}.
\]
Here $A^\star,\Sigma^\star\in C^0\subset L^\infty$ and
$\dot A^\star,\dot\Sigma^\star\in L^2\subset L^1$, so each product lies in $L^1$.
Hence $\ddot\Sigma^\star\in L^1$, giving $\Sigma^\star\in W^{2,1}$.
Differentiate once more in $\mathcal D'(0,T)$ and use $\ddot A^\star\in L^1$
from Step~2:
\begin{align*}
\Sigma^{\star(3)}
&= \ddot A^\star\Sigma^\star
+ 2\,\dot A^\star\dot\Sigma^\star
+ A^\star\ddot\Sigma^\star\\
&+ \ddot\Sigma^\star A^{\star\top}
+ 2\,\dot\Sigma^\star\dot A^{\star\top}
+ \Sigma^\star\ddot A^{\star\top}
\in L^1,
\end{align*}
since $L^2\cdot L^2\subset L^1$ and $L^\infty\cdot L^1\subset L^1$ on $[0,T]$.
Thus $\Sigma^\star\in W^{3,1}\hookrightarrow C^2$. This proves (iii) and completes the proof.
\end{proof}

\begin{thm}[Smoothness in time of minimizers]\label{thm:reg-analytic}
Let $(A^\star,\Sigma^\star)\in\mathcal H$ be a  minimizer of
Problem~\ref{problem:main-min-attention} with multiplier $\Lambda^\star$ from
Theorem~\ref{thm:KKT-aug}. Then $A^\star,\Sigma^\star,\Lambda^\star$ admit
representatives that are \emph{smooth} on $[0,T]$. In particular,
\[
A^\star,\Sigma^\star,\Lambda^\star \in C^\infty([0,T]).
\]
\end{thm}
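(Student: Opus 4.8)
The strategy is a standard bootstrap: having established in Lemma~\ref{thm:reg-upgrade} that $A^\star\in C^1$, $\Sigma^\star\in C^2$, and $\Lambda^\star\in C^0$, I would feed this improved regularity back into the three coupled equations \eqref{eq:primal}, \eqref{eq:adjoint}, \eqref{eq:stationarity} repeatedly, each round gaining one derivative on each variable, and then invoke an induction argument to conclude $C^\infty$. The key observation is that the uniform bounds $cI\le\Sigma^\star_t\le CI$ from Theorem~\ref{eq:thm} guarantee $(\Sigma^\star)^{-1}$ stays in the same regularity class as $\Sigma^\star$ (the matrix-inversion map is smooth on the compact SPD set $[cI,CI]$), so dividing by $\Sigma^\star$ never costs regularity.

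First I would set up the induction hypothesis: suppose for some integer $k\ge 1$ that $A^\star\in C^{k}$, $\Sigma^\star\in C^{k+1}$, $\Lambda^\star\in C^{k-1}$. The base case $k=1$ is exactly Lemma~\ref{thm:reg-upgrade}. For the inductive step, I would proceed in three sub-steps mirroring the lemma's proof. (a) \emph{Adjoint bootstrap:} in \eqref{eq:adjoint}, $-\dot\Lambda^\star=\Lambda^\star A^\star+A^{\star\top}\Lambda^\star-(1-\alpha)\dot A^{\star\top}\dot A^\star$; the right-hand side is now a product/sum of $C^{k-1}$ and $C^{k}$ (and $C^{k-1}$ from $\dot A^\star$) objects, hence $C^{k-1}$, so $\Lambda^\star\in C^{k}$. (b) \emph{Stationarity bootstrap:} rewrite \eqref{eq:stationarity} as $\tfrac{d}{dt}(\dot A^\star\Sigma^\star)=\tfrac{1}{1-\alpha}\big(\alpha A^\star-\Lambda^\star\Sigma^\star\big)$; the right side is now $C^{k}$ (using the upgraded $\Lambda^\star$), so $\dot A^\star\Sigma^\star\in C^{k+1}$; expanding $\tfrac{d}{dt}(\dot A^\star\Sigma^\star)=\ddot A^\star\Sigma^\star+\dot A^\star\dot\Sigma^\star$ and solving for $\ddot A^\star=\big(\text{($C^{k}$ terms)}\big)(\Sigma^\star)^{-1}$ gives $\ddot A^\star\in C^{k-1}$... wait, one must be careful to extract the sharp gain: since $\dot A^\star\Sigma^\star\in C^{k+1}$ and $\Sigma^\star\in C^{k+1}$ with $(\Sigma^\star)^{-1}\in C^{k+1}$, we get $\dot A^\star\in C^{k+1}$, i.e. $A^\star\in C^{k+1}$. (c) \emph{Lyapunov bootstrap:} differentiating \eqref{eq:primal} and using $A^\star\in C^{k+1}$, $\Sigma^\star\in C^{k+1}$ shows $\dot\Sigma^\star$ is a sum of products of $C^{k+1}$ functions, hence $\dot\Sigma^\star\in C^{k+1}$, i.e. $\Sigma^\star\in C^{k+2}$. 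This closes the induction with $k\mapsto k+1$, and since $\bigcap_k C^k=C^\infty$, the three functions are smooth.

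The main obstacle — really a bookkeeping subtlety rather than a deep difficulty — is tracking the \emph{sharp} regularity gain at each sub-step so that the indices actually advance in lockstep; a naive accounting (e.g. concluding only $A^\star\in C^{k}$ again from step (b)) would stall the induction. The correct bookkeeping hinges on two facts: that solving $\dot A^\star\Sigma^\star=g$ for $\dot A^\star=g(\Sigma^\star)^{-1}$ preserves the regularity of $g$ because $(\Sigma^\star)^{-1}$ is at least as regular as $g$ at that stage, and that the Lyapunov equation \eqref{eq:primal}, being first-order in $\dot\Sigma^\star$ with coefficients in $A^\star$, always buys $\Sigma^\star$ one derivative beyond $A^\star$. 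I would present the argument cleanly by stating the induction hypothesis with the offsets $(C^k, C^{k+1}, C^{k-1})$ for $(A^\star,\Sigma^\star,\Lambda^\star)$ baked in, so that each of the three equations is seen to advance its target by exactly one order. No new analytic input beyond Theorem~\ref{eq:thm} and Lemma~\ref{thm:reg-upgrade} is needed; the smoothness of matrix inversion on a compact SPD set and the product rule for $C^k$ functions suffice.
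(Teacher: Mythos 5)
Your proof is correct and follows essentially the same route as the paper: a three-equation bootstrap (adjoint $\to$ stationarity $\to$ Lyapunov) closed by an induction, driven by the uniform bounds $cI\le\Sigma_t^\star\le CI$ from Theorem~\ref{eq:thm} so that multiplying by $(\Sigma^\star)^{-1}$ never loses regularity. The paper phrases the induction in the Sobolev scale $(\Lambda^\star,A^\star,\Sigma^\star)\in W^{m,1}\times W^{m+1,1}\times W^{m+2,1}$ using the distributional Leibniz rule, whereas you work directly in $C^{k-1}\times C^{k}\times C^{k+1}$ — equivalent for this purpose via the one-dimensional embedding $W^{r,1}\hookrightarrow C^{r-1}$ — and the only slip is in your step~(b), where $\dot A^\star\in C^{k+1}$ actually gives $A^\star\in C^{k+2}$ rather than $C^{k+1}$, a harmless overshoot of what the induction requires.
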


\begin{proof}
By Theorem~\ref{eq:thm} there exist $0<c\le C<\infty$ such that
\begin{equation}\label{eq:SPD-bounds}
cI \le \Sigma_t^\star \le CI \qquad \forall t\in[0,T].
\end{equation}
Consequently $\Sigma_t^\star$ is invertible for all $t$, and $(\Sigma^\star)^{-1}\in L^\infty([0,T];\mathbb S^n)$. We are now ready to prove by induction that for every integer $m\ge 1$, we have
\begin{align}\label{eq:bootstrap-claim}
    &\Lambda^\star\in W^{m,1}([0,T]),\nonumber\\
&A^\star\in W^{m+1,1}([0,T]),\\
&\Sigma^\star\in W^{m+2,1}([0,T]).\nonumber
\end{align}
The base case $m=1$ is exactly Lemma~\ref{thm:reg-upgrade}. For the inductive step, assume \eqref{eq:bootstrap-claim} holds for some $m\ge 1$.
Since in one dimension, 
\[
W^{r,1}([0,T])\hookrightarrow C^{r-1}([0,T])\quad\text{for all }r\ge 1,
\]
the inductive hypothesis implies
\[
\Lambda^\star\in C^{m-1},\qquad
A^\star\in C^{m},\qquad
\Sigma^\star\in C^{m+1},
\]
so all derivatives up to these orders are bounded on $[0,T]$.
We repeatedly use the distributional Leibniz rule: if $f,g\in W^{m,1}$, then
\begin{equation}\label{eq:leibniz}
(fg)^{(m)}=\sum_{\ell=0}^m \binom{m}{\ell} f^{(\ell)}g^{(m-\ell)}
\quad\text{in }\mathcal D'(0,T).
\end{equation}

\smallskip\noindent
\emph{(a) Upgrade $\Lambda^\star$ from $W^{m,1}$ to $W^{m+1,1}$.}
By Theorem~\ref{thm:KKT-aug}, the adjoint equation holds in $\mathcal D'(0,T)$:
\[
-\dot\Lambda^\star
=\Lambda^\star A^\star+A^{\star\top}\Lambda^\star
-(1-\alpha)\dot A^{\star\top}\dot A^\star .
\]
Differentiate $m$ times distributionally. Using \eqref{eq:leibniz},
\begin{align*}
(\Lambda^\star A^\star)^{(m)}
&=\sum_{\ell=0}^m \binom{m}{\ell}\Lambda^{\star(\ell)}A^{\star(m-\ell)},\\
(A^{\star\top}\Lambda^\star)^{(m)}
&=\sum_{\ell=0}^m \binom{m}{\ell}A^{\star(m-\ell)\top}\Lambda^{\star(\ell)},\\
(\dot A^{\star\top}\dot A^\star)^{(m)}
&=\sum_{\ell=0}^m \binom{m}{\ell}A^{\star(\ell+1)\top}A^{\star(m-\ell+1)}.
\end{align*}
In each sum, at most one factor is of top order in $L^1$
(namely $\Lambda^{\star(m)}$, $A^{\star(m)}$, or $A^{\star(m+1)}$),
while the remaining factors are lower–order and hence bounded. Thus
every term lies in $L^1([0,T])$, so
$\Lambda^{\star(m+1)}\in L^1$, i.e., $\Lambda^\star\in W^{m+1,1}$.

\smallskip\noindent
\emph{(b) Upgrade $A^\star$ from $W^{m+1,1}$ to $W^{m+2,1}$.}
The stationarity condition holds in $\mathcal D'(0,T)$:
\begin{equation}\label{eq:stat-boot}
\Lambda^\star\Sigma^\star
= \alpha A^\star-(1-\alpha)\frac d{dt}\big(\dot A^\star\Sigma^\star\big).
\end{equation}
Since $\alpha\in(0,1)$, rearrange to
\begin{equation}\label{eq:fprime}
\frac d{dt}\big(\dot A^\star\Sigma^\star\big)
= \frac{\alpha}{1-\alpha}A^\star
-\frac{1}{1-\alpha}\Lambda^\star\Sigma^\star .
\end{equation}
Differentiate \eqref{eq:fprime} $m$ times in $\mathcal D'(0,T)$ and apply
\eqref{eq:leibniz} to $(\Lambda^\star\Sigma^\star)^{(m)}$:
\[
(\Lambda^\star\Sigma^\star)^{(m)}
=\sum_{\ell=0}^m \binom{m}{\ell}\Lambda^{\star(\ell)}\Sigma^{\star(m-\ell)}.
\]
By part (a), $\Lambda^\star\in W^{m+1,1}$, and by the inductive hypothesis
$A^\star\in W^{m+1,1}$, $\Sigma^\star\in W^{m+2,1}$, so the right–hand side of
\eqref{eq:fprime} has $m$-th derivative in $L^1$. Hence
\[
\frac d{dt}\big(\dot A^\star\Sigma^\star\big)\in W^{m,1},
\quad\text{equivalently}\quad
\dot A^\star\Sigma^\star\in W^{m+1,1}.
\]
Set $f:=\dot A^\star\Sigma^\star\in W^{m+1,1}$. Differentiating
$f=\dot A^\star\Sigma^\star$ $(m+1)$ times and using \eqref{eq:leibniz} yields
\[
f^{(m+1)}
= A^{\star(m+2)}\Sigma^\star
+\sum_{\ell=0}^{m+1}\binom{m+1}{\ell}\,
A^{\star(\ell+1)}\Sigma^{\star(m+1-\ell)} .
\]
Again, each term in the sum is in $L^1$ since at most one factor is top order
and all others are bounded. Using \eqref{eq:SPD-bounds} so that
$(\Sigma^\star)^{-1}\in L^\infty$, we solve for
\begin{align*}
   &A^{\star(m+2)}
=\\
&\Bigl(f^{(m+1)}-\sum_{\ell=0}^{m+1}\binom{m+1}{\ell}\,
A^{\star(\ell+1)}\Sigma^{\star(m+1-\ell)}\Bigr)(\Sigma^\star)^{-1}\in L^1.
\end{align*}
Thus $A^\star\in W^{m+2,1}$.

\smallskip\noindent
\emph{(c) Upgrade $\Sigma^\star$ from $W^{m+2,1}$ to $W^{m+3,1}$.}
The Lyapunov equation holds in $\mathcal D'(0,T)$:
\begin{equation}\label{eq:lyap-boot}
\dot\Sigma^\star = A^\star\Sigma^\star+\Sigma^\star A^{\star\top}+BB^\top .
\end{equation}
Differentiate \eqref{eq:lyap-boot} $(m+2)$ times distributionally. Applying
\eqref{eq:leibniz} gives
\begin{align*}
(A^\star\Sigma^\star)^{(m+2)}
&=\sum_{\ell=0}^{m+2}\binom{m+2}{\ell}
A^{\star(\ell)}\Sigma^{\star(m+2-\ell)},\\
(\Sigma^\star A^{\star\top})^{(m+2)}
&=\sum_{\ell=0}^{m+2}\binom{m+2}{\ell}
\Sigma^{\star(\ell)}A^{\star(m+2-\ell)\top}.
\end{align*}
By part (b) we have $A^\star\in W^{m+2,1}$, and by the inductive hypothesis
$\Sigma^\star\in W^{m+2,1}$; hence each product above contains at most one
top–order $L^1$ factor and all remaining factors bounded. Therefore
$\Sigma^{\star(m+3)}\in L^1$, i.e., $\Sigma^\star\in W^{m+3,1}$.

\medskip\noindent
Combining (a)--(c) yields \eqref{eq:bootstrap-claim} for $m+1$.
By induction, \eqref{eq:bootstrap-claim} holds for all $m$.
Consequently $A^\star,\Sigma^\star,\Lambda^\star\in W^{k,1}$ for every $k$,
and thus $A^\star,\Sigma^\star,\Lambda^\star\in C^\infty([0,T])$.
\end{proof}

%\begin{remark}[Analyticity in time]\label{rm:analyticity}
%Although Theorem~\ref{thm:reg-analytic} only states smoothness, the same argument
%actually yields \emph{real-analyticity in time}. Indeed, once the initial lift
%of Lemma~\ref{thm:reg-upgrade} places %$(A^\star,\Sigma^\star,\Lambda^\star)$ in the
%classical regime, the first-order necessary conditions hold pointwise and can be rewritten as a closed
%first--order autonomous ODE on the open set
%\[
%\mathcal U=\mathbb R^{n\times n}\times\mathbb R^{n\times n}\times\mathbb S_{++}^n\times\mathbb S^n,
%\]
%whose vector field is built from finitely many real-analytic operations
%(matrix addition, multiplication, transpose, and inversion on $\mathbb S_{++}^n$).
%By the uniform interior bounds $cI\le \Sigma_t^\star\le CI$, the trajectory remains
%in a compact subset of $\mathbb S_{++}^n$, hence the vector field is analytic in a
%neighborhood of the trajectory. Standard analytic ODE results (ODE version of
%Cauchy--Kovalevskaya) therefore imply that %$A^\star,\Sigma^\star,\Lambda^\star$
%are real-analytic on $[0,T]$; see, e.g., \cite[Ch.~2]{KrantzParks2002} or
%\cite[Ch.~1]{CoddingtonLevinson1955}.

%We emphasize that the Sobolev bootstrapping to $C^\infty$ is not redundant:
%it provides a direct, equation-by-equation mechanism for upgrading
%energy-level minimizers to classical solutions and yields explicit %control of
%higher derivatives even without invoking analyticity. The analytic upgrade is a
%\emph{corollary} available once the classical regime is reached.
%\end{remark}
\section{Spatial attention}\label{sec:spatial}

In this section, we study the edge case $\alpha=1$, where the attention functional
penalizes only the spatial component. The resulting variational
problem seeks a control that steers $\Sigma_0$ to $\Sigma_T$ while minimizing
\[
J_1(A,\Sigma):=\int_0^T \mathrm{tr}(A_tA_t^\top)\,dt=\int_0^T\|A_t\|_F^2\,dt .
\]
Unlike the mixed case $0<\alpha<1$, no temporal penalty on $\dot A$ is present.
Nevertheless, the structure of the first-order necessary conditions together with uniform interior spectral bounds still
forces minimizers to be smooth in time.

\begin{problem}[Spatial attention: $\alpha=1$]\label{problem:case1-min-attention}
Given $T>0$, $B\in\mathbb R^{n\times m}$, and endpoints
$\Sigma_{\init},\Sigma_{\fin}\in\mathbb S_{++}^n$, determine
\[
\min_{(A,\Sigma)\in\mathcal H_1} J_1(A,\Sigma),
\]
subject to the Lyapunov dynamics
\begin{equation}\label{eq:constr1-a1}
\dot\Sigma_t=A_t\Sigma_t+\Sigma_tA_t^\top+BB^\top ~~ \text{a.e. on }[0,T],
\end{equation}
and endpoint constraints
\begin{equation}\label{eq:constr3}
\Sigma_0=\Sigma_{\init},\qquad \Sigma_T=\Sigma_{\fin}.
\end{equation}
Here, the search space is
\[
\mathcal H_1 := L^2([0,T];\mathbb R^{n\times n})\oplus H^1([0,T];\mathbb S^n) .
\]
\end{problem}
In what follows many of the steps in the proofs are similar to those in the ealier section and will be given with less detail, to avoid repetition. 

\subsection{Existence of minimizers}

\begin{thm}[Existence in the spatial case $\alpha=1$]\label{thm:exist-alpha1}
Problem~\ref{problem:case1-min-attention} admits a  minimizer
$(A^\star,\Sigma^\star)\in\mathcal H_1$.
Moreover, every  minimizer $\Sigma^\star$ satisfies the uniform spectral bounds
\[
c\,I \le \Sigma_t^\star \le C\,I,\qquad \forall t\in[0,T],
\]
where, writing $J_1^\star:=\inf J_1$,
\begin{align*}
c&:=e^{-2\sqrt{T\,J_1^\star}}\,\lambda_{\min}(\Sigma_{\init}),\\
C&:=e^{2\sqrt{T\,J_1^\star}}
\Bigl(\lambda_{\max}(\Sigma_{\init})+T\,\lambda_{\max}(BB^\top)\Bigr).
\end{align*}
\end{thm}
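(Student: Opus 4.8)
The plan is to mirror the proof of Theorem~\ref{eq:thm} almost verbatim, since the spatial case is the restriction $\alpha=1$ of the original functional, with the simplification that no temporal term $\tr(\dot A_t\Sigma_t\dot A_t^\top)$ is present and hence $A$ lives only in $L^2$, not $H^1$. First I would establish feasibility exactly as before: the affine interpolation $\Sigma_t=(1-t/T)\Sigma_\init+(t/T)\Sigma_\fin$ together with the control $A_t=\tfrac12(\dot\Sigma_t-BB^\top)\Sigma_t^{-1}$ is admissible in $\mathcal H_1$ (indeed $A$ and $\Sigma$ are continuous, hence in $L^2$ and $H^1$ respectively), so $0\le \inf J_1\le J_1(A,\Sigma)<\infty$. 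Then take a minimizing sequence $(A^k,\Sigma^k)\in\mathcal H_1$ with $J_1(A^k,\Sigma^k)\le \inf J_1+1/k=:M_k<M_1$, and use Cauchy--Schwarz to get $\|A^k\|_{L^1}\le\sqrt T\,\|A^k\|_{L^2}\le\sqrt{T M_k}$.

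Next I would reproduce the Gr\"onwall argument on the state-transition matrix $\Phi^k(t,s)$ solving $\partial_t\Phi^k=A^k\Phi^k$, $\Phi^k(s,s)=I$: the singular-value bounds $\sigma_{\max}(\Phi^k(t,s))\le e^{\sqrt{T M_k}}$ and $\sigma_{\min}(\Phi^k(t,s))\ge e^{-\sqrt{T M_k}}$ follow since $\int_s^t\|A^k_u\|\,du\le\|A^k\|_{L^1}\le\sqrt{T M_k}$. Writing $\Sigma_t^k=\Phi^k(t,0)\Sigma_\init\Phi^k(t,0)^\top+\int_0^t\Phi^k(t,s)BB^\top\Phi^k(t,s)^\top\,ds$, the Rayleigh-quotient estimates give $\lambda_{\min}(\Sigma_t^k)\ge e^{-2\sqrt{T M_k}}\lambda_{\min}(\Sigma_\init)=:c_k$ and $\lambda_{\max}(\Sigma_t^k)\le e^{2\sqrt{T M_k}}(\lambda_{\max}(\Sigma_\init)+T\lambda_{\max}(BB^\top))=:C_k$, hence the uniform bounds $c_1 I<c_k I\le\Sigma_t^k\le C_k I<C_1 I$. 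From $J_1(A^k,\Sigma^k)=\|A^k\|_{L^2}^2$ we get $\|A^k\|_{L^2}^2\le M_1$, so $A^k$ is bounded in $L^2$; and $\|\dot\Sigma^k\|_{L^2}\le 2\sqrt n\,C_1\sqrt{M_1}+\sqrt T\|BB^\top\|_F$ via the Lyapunov equation, while $\|\Sigma^k\|_{L^2}^2\le nTC_1^2$, so $\Sigma^k$ is bounded in $H^1$.

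By reflexivity of $\mathcal H_1=L^2\oplus H^1$ and Banach--Alaoglu, pass to a subsequence with $A^k\rightharpoonup A^\star$ in $L^2$ and $\Sigma^k\rightharpoonup\Sigma^\star$ in $H^1$; by Rellich--Kondrachov, $\Sigma^k\to\Sigma^\star$ in $C^0$ along a further subsequence, and the uniform spectral bounds pass to $\Sigma^\star$ via Weyl's inequalities as in Theorem~\ref{eq:thm}. Feasibility of the limit: the endpoint constraints pass by uniform convergence; in $\dot\Sigma^k=A^k\Sigma^k+\Sigma^k (A^k)^\top+BB^\top$ the left side converges weakly in $L^2$ while the right side --- here I would be slightly more careful, since $A^k$ only converges weakly, not uniformly --- converges weakly in $L^2$ as well: $\Sigma^k\to\Sigma^\star$ in $C^0$ and $A^k\rightharpoonup A^\star$ in $L^2$ imply $A^k\Sigma^k\rightharpoonup A^\star\Sigma^\star$ in $L^2$ (testing against $Z\in L^2$, split $\langle Z,A^k\Sigma^k-A^\star\Sigma^\star\rangle_{L^2}$ into $\langle Z,A^k(\Sigma^k-\Sigma^\star)\rangle_{L^2}$, bounded by $\|Z\|_{L^2}\|A^k\|_{L^2}\|\Sigma^k-\Sigma^\star\|_{C^0}\to0$, plus $\langle Z\Sigma^\star,A^k-A^\star\rangle_{L^2}\to0$). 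Uniqueness of the $L^2$ limit then gives \eqref{eq:constr1-a1} a.e. Finally, lower semicontinuity: $J_1(A^\star,\Sigma^\star)=\|A^\star\|_{L^2}^2\le\liminf_k\|A^k\|_{L^2}^2=\liminf_k J_1(A^k,\Sigma^k)=\inf J_1$ by weak lower semicontinuity of the $L^2$-norm, so $(A^\star,\Sigma^\star)$ is a minimizer with $J_1^\star=\inf J_1$, and $c=\lim_k c_k$, $C=\lim_k C_k$ take the stated form. The only mild obstacle relative to the $\alpha\in(0,1)$ case is this weak-times-strong product convergence in the Lyapunov constraint, since we have lost the uniform convergence of $A^k$; everything else is a strict simplification.
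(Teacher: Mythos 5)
Your proof is correct and mirrors the paper's argument step for step: feasibility via the affine interpolation, Gr\"onwall/Rayleigh bounds on $\Sigma^k$, $L^2\oplus H^1$ coercivity, Banach--Alaoglu plus Rellich, and weak lower semicontinuity of $\|A\|_{L^2}^2$. The one place where extra care is genuinely needed --- passing to the limit in $A^k\Sigma^k$ when $A^k$ is only weakly convergent in $L^2$ --- is handled with precisely the same decomposition $(A^k-A^\star)\Sigma^\star + A^k(\Sigma^k-\Sigma^\star)$ that the paper uses, so the approaches coincide.
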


\begin{proof}
\emph{Step 1: Feasibility and finite infimum.}
Consider the reference covariance path
\[
\Sigma_t:=(1-t/T)\Sigma_{\init}+(t/T)\Sigma_{\fin},\qquad t\in[0,T].
\]
Since $\Sigma_{\init},\Sigma_{\fin}\in\mathbb S_{++}^n$ and $\mathbb S_{++}^n$ is convex,
$\Sigma_t\in\mathbb S_{++}^n$ for all $t$. Define
\[
A_t:=\tfrac12(\dot\Sigma_t-BB^\top)(\Sigma_t)^{-1}.
\]
Then $\Sigma\in H^1$ and $A\in C^0\subset L^2$, and by direct substitution
$(A,\Sigma)$ satisfies \eqref{eq:constr1-a1}--\eqref{eq:constr3}.
Hence, the feasible set is nonempty and
\[
0\le \inf J_1 \le J_1(A,\Sigma)<\infty.
\]

\emph{Step 2: Minimizing sequence and coercivity.}
Let $(A^{k},\Sigma^{k})\in\mathcal H_1$ be a minimizing sequence with
\begin{equation}\label{eq:M-a1}
J_1(A^{k},\Sigma^{k})\le J_1^\star+\tfrac1k=:M_k .
\end{equation}
Then $\|A^k\|_{L^2}^2\le M_k$ and by Cauchy--Schwarz,
\begin{equation}\label{eq:L1bound-a1}
\|A^k\|_{L^1}\le \sqrt{T}\,\|A^k\|_{L^2}\le \sqrt{T\,M_k}.
\end{equation}

\emph{Step 3: Uniform spectral bounds on $\Sigma^k$.}
Let $\Phi^k(t,s)$ solve $\partial_t\Phi^k(t,s)=A_t^k\Phi^k(t,s)$ with $\Phi^k(s,s)=I$.
Gronwall and \eqref{eq:L1bound-a1} yield
\begin{align*}
    &\sigma_{\max}(\Phi^k(t,s))\le \exp\!\Big(\int_s^t\|A_u^k\|\,du\Big)
\le e^{\sqrt{T\,M_k}},\\
&
\sigma_{\min}(\Phi^k(t,s))\ge e^{-\sqrt{T\,M_k}}.
\end{align*}
Using the representation
\[
\Sigma_t^k=\Phi^k(t,0)\Sigma_{\init}\Phi^k(t,0)^\top
+\int_0^t\Phi^k(t,s)BB^\top\Phi^k(t,s)^\top\,ds,
\]
Rayleigh quotient bounds give, for all $t$,
\[
\lambda_{\min}(\Sigma_t^k)\ge e^{-2\sqrt{T\,M_k}}\lambda_{\min}(\Sigma_{\init})=:c_k,
\]
and
\[
\lambda_{\max}(\Sigma_t^k)\le
e^{2\sqrt{T\,M_k}}\Bigl(\lambda_{\max}(\Sigma_{\init})
+T\,\lambda_{\max}(BB^\top)\Bigr)=:C_k.
\]
Since $M_k\le M_1$ for all $k$, we obtain uniform bounds
\begin{equation}\label{eq:sbounds-a1}
0<c_1 I \le \Sigma_t^k \le C_1 I<\infty .
\end{equation}

\emph{Step 4: $H^1$--bounds for $\Sigma^k$.}
From \eqref{eq:constr1-a1} and \eqref{eq:sbounds-a1},
\[
\dot\Sigma^k=A^k\Sigma^k+\Sigma^kA^{k\top}+BB^\top\in L^2,
\]
because $\Sigma^k\in L^\infty$, $A^k\in L^2$ and \eqref{eq:L1bound-a1}. Hence $\Sigma^k$ is uniformly bounded
in $H^1([0,T])$.

\emph{Step 5: Compactness and passage to the limit.}
By reflexivity and Banach--Alaoglu there is a subsequence (not relabeled) and
$(A^\star,\Sigma^\star)\in\mathcal H_1$ such that
\[
A^k\rightharpoonup A^\star\ \text{ in }L^2,\qquad
\Sigma^k\rightharpoonup \Sigma^\star\ \text{ in }H^1 .
\]
Rellich--Kondrachov in one dimension gives $\Sigma^k\to\Sigma^\star$ in $C^0$.
Thus, endpoints \eqref{eq:constr3} pass to the limit.
Moreover,
\[
A^k\Sigma^k-A^\star\Sigma^\star=(A^k-A^\star)\Sigma^\star+A^k(\Sigma^k-\Sigma^\star),
\]
where the first term converges weakly to $0$ in $L^2$, and the second tends to $0$ in $L^2$
because $\|A^k\|_{L^2}$ is bounded and $\Sigma^k\to\Sigma^\star$ uniformly.
The same holds for $\Sigma^kA^{k\top}$, so \eqref{eq:constr1-a1} holds for the limit.
Hence $(A^\star,\Sigma^\star)$ is feasible.

\emph{Step 6: Minimality and limiting spectral bounds.}
Weak lower semicontinuity of $\|\cdot\|_{L^2}^2$ yields
\[
J_1(A^\star,\Sigma^\star)\le \liminf_k J_1(A^k,\Sigma^k)=J_1^\star .
\]
Thus $(A^\star,\Sigma^\star)$ is a  minimizer.
Finally, Weyl’s inequalities combined with the uniform convergence of $\Sigma^k$
pass \eqref{eq:sbounds-a1} to the limit, giving the stated constants with
$M_k\to J_1^\star$.
\end{proof}

\begin{remark}[Relaxed bounds]
As in the mixed case, any admissible value $\bar J_1\ge J_1^\star$ may be substituted for
$J_1^\star$ in the expressions of $c$ and $C$, yielding conservative computable
bounds.
\end{remark}

\subsection{First--order necessary conditions}

\begin{thm}[FONC for the spatial case]\label{thm:KKT-alpha1}
Let $(A^\star,\Sigma^\star)\in\mathcal H_1$ be a  minimizer of
Problem~\ref{problem:case1-min-attention}. Then there exist multipliers
$\Lambda\in W^{1,1}([0,T];\mathbb S^n)$ and $M_0,M_T\in\mathbb S^n$ such that
\begin{subequations}\label{eq:KKT-alpha1}
\begin{align}
\dot\Sigma^\star
&=A^\star\Sigma^\star+\Sigma^\star A^{\star\top}+BB^\top,
\label{eq:KKT-a1-primal}\\
-\dot\Lambda
&=\Lambda A^\star+A^{\star\top}\Lambda,
\label{eq:KKT-a1-adjoint}\\
A^\star&=\Lambda\Sigma^\star,
\label{eq:KKT-a1-stationarity}\\
\Lambda_0&=M_0,\qquad \Lambda_T=-M_T .
\label{eq:KKT-a1-trans}
\end{align}
\end{subequations}
All identities hold a.e.\ on $[0,T]$.
\end{thm}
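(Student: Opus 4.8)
The plan is to invoke the Hilbert‑space Lagrange multiplier theorem, Theorem~\ref{thm:hilbert-kkt}, exactly as in the proof of Theorem~\ref{thm:KKT-aug}, but specialized to $\alpha=1$; the specialization is genuinely lighter, since the cost no longer depends on $\dot A$ and the control component of $\mathcal H_1$ lives only in $L^2$. I would take $X:=\mathcal H_1$, $Y:=L^2([0,T];\mathbb{S}^n)\oplus\mathbb{S}^n\oplus\mathbb{S}^n$, keep $J_1:X\to\mathbb R$, and set $\mathcal C(A,\Sigma):=(\dot\Sigma-A\Sigma-\Sigma A^\top-BB^\top,\ \Sigma_0-\Sigma_\init,\ \Sigma_T-\Sigma_\fin)$, which maps into $Y$ because for $A\in L^2$ and $\Sigma\in H^1\hookrightarrow C^0$ the bilinear terms lie in $L^2$. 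The Fr\'echet derivatives are $D_xJ_1(A,\Sigma)[H,K]=2\langle A,H\rangle_{L^2}$ and $D_x\mathcal C(A,\Sigma)[H,K]=(\dot K-AK-KA^\top-H\Sigma-\Sigma H^\top,\ K_0,\ K_T)$.

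First I would check the hypotheses of Theorem~\ref{thm:hilbert-kkt}. That $J_1$ and $\mathcal C$ are $C^1$ is quick: $D_xJ_1$ is linear, and for $\mathcal C$ the bounds $\|H(\Sigma-\Sigma')\|_{L^2}\le\|H\|_{L^2}\|\Sigma-\Sigma'\|_{C^0}$ and $\|(A-A')K\|_{L^2}\le\|A-A'\|_{L^2}\|K\|_{C^0}$, together with $H^1\hookrightarrow C^0$, make $D_x\mathcal C$ Lipschitz. Surjectivity of $D_x\mathcal C(A^\star,\Sigma^\star)$ is established by the same construction as for Theorem~\ref{thm:KKT-aug}: solve the linearized Lyapunov equation with the prescribed initial datum and forcing, then kill the terminal mismatch $\Delta$ with a correction control $H^c=\tfrac12 F(\Sigma^\star)^{-1}$, $F_s=\tfrac1T\Phi(s,T)\Delta\Phi(s,T)^\top$, with $\Phi$ the fundamental matrix of $A^\star$. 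The only point needing attention is that $H^c$ must land in the $A$‑component of $\mathcal H_1$, i.e.\ in $L^2$ — and here, unlike the mixed case, that is all that is demanded: since $A^\star\in L^2\subset L^1$ the matrix $\Phi(\cdot,T)$ is absolutely continuous, so $F\in C^0$, and Theorem~\ref{thm:exist-alpha1} gives $cI\le\Sigma^\star_t\le CI$, hence $(\Sigma^\star)^{-1}\in C^0$ and $H^c\in C^0\subset L^2$.

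With these in hand, Theorem~\ref{thm:hilbert-kkt} produces a multiplier $(\Lambda,M_0,M_T)\in Y$ with $D_x\mathcal L((A^\star,\Sigma^\star),(\Lambda,M_0,M_T))=0$ for $\mathcal L(x,y)=J_1(x)+\langle y,\mathcal C(x)\rangle_Y$, and I would read off the four conditions by varying separately in $\delta\Sigma$ and $\delta A$. Taking $\delta A=0$ and $\delta\Sigma\in H^1_0$, using the symmetry of $\Lambda,\delta\Sigma$ and the distributional meaning of $\dot\Lambda\in H^{-1}$, gives $-\dot\Lambda=\Lambda A^\star+A^{\star\top}\Lambda$ in $H^{-1}$; since $\Lambda,A^\star\in L^2$ the right‑hand side is in $L^1$, so $\Lambda\in W^{1,1}([0,T];\mathbb{S}^n)\hookrightarrow C^0$ and \eqref{eq:KKT-a1-adjoint} holds a.e., after which integration by parts against general $\delta\Sigma\in H^1$ and the arbitrariness of $\delta\Sigma_0,\delta\Sigma_T$ yield \eqref{eq:KKT-a1-trans}. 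Taking $\delta\Sigma=0$ and testing against all of $\delta A\in L^2$ — no $\dot A$ term is present, so no restriction to $H^1_0$ is needed — the symmetry of $\Lambda$ and $\Sigma^\star$ gives $\langle\Lambda,\delta A\,\Sigma^\star+\Sigma^\star\delta A^\top\rangle_F=2\langle\Lambda\Sigma^\star,\delta A\rangle_F$, whence $\langle A^\star-\Lambda\Sigma^\star,\delta A\rangle_{L^2}=0$ for every $\delta A$ and therefore $A^\star=\Lambda\Sigma^\star$ a.e., which is \eqref{eq:KKT-a1-stationarity}; \eqref{eq:KKT-a1-primal} holds by feasibility of the minimizer.

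I do not expect a serious obstacle — the proof is a shorter transcription of that of Theorem~\ref{thm:KKT-aug} — but the step that needs genuine (if easy) care is the surjectivity of $D_x\mathcal C(A^\star,\Sigma^\star)$ under the weakened regularity $A^\star\in L^2$, where one must check that absolute continuity of $\Phi(\cdot,T)$ and the uniform spectral bounds on $\Sigma^\star$ put the correction $H^c$ back in $L^2$. I would also record, as a byproduct, that \eqref{eq:KKT-a1-stationarity} with $\Lambda,\Sigma^\star\in C^0$ forces $A^\star$ to have a continuous representative, which is the natural starting point for the regularity upgrade that follows.
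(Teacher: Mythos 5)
Your proposal is correct and follows essentially the same route as the paper: specialize Theorem~\ref{thm:hilbert-kkt} to $X=\mathcal H_1$ with the same $Y$ and $\mathcal C$, note the simplified $C^1$ and surjectivity checks under $A^\star\in L^2$, then read off the adjoint, transversality, and stationarity conditions from the vanishing Lagrangian derivative, upgrading $\Lambda$ to $W^{1,1}$ via $\Lambda A^\star,A^{\star\top}\Lambda\in L^1$. Your explicit remark that under $A^\star\in L^2$ the fundamental matrix is only absolutely continuous (rather than $C^1$) but that this still gives $F\in C^0$ and $H^c\in C^0\subset L^2$ is a small but useful elaboration of the paper's terse claim that surjectivity ``follows identically, and is in fact easier.''
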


\begin{proof}
Apply Theorem~\ref{thm:hilbert-kkt} with $X=\mathcal H_1$,
$Y=L^2([0,T];\mathbb S^n)\oplus\mathbb S^n\oplus\mathbb S^n$,
constraint map $\mathcal C$ as in the mixed case, and $J_1(A,\Sigma)=\|A\|_{L^2}^2$.
The maps $J_1$ and $\mathcal C$ are Fr\'echet $C^1$ on $X$ by the same arguments as in
Theorem~\ref{thm:KKT-aug} (here the $J_1$ derivative simplifies). The surjectivity of
$D\mathcal C(A^\star,\Sigma^\star)$ follows identically, and is in fact easier since
only $H^c\in L^2$ is required (for definion of $H^c$ see text before \eqref{eq:numbered}).

Thus there exists $\tilde\Lambda=(\Lambda,M_0,M_T)\in Y$ such that
$D_x\mathcal L((A^\star,\Sigma^\star),\tilde\Lambda)=0$.
Variations $\delta\Sigma\in H_0^1$ give the adjoint equation in $H^{-1}$ and, since
$\Lambda,A^\star\in L^2$ and $L^2\cdot L^2\subset L^1$ on $[0,T]$, we conclude
$\dot\Lambda\in L^1$, so $\Lambda\in W^{1,1}$ and \eqref{eq:KKT-a1-adjoint} holds a.e.
Boundary variations yield \eqref{eq:KKT-a1-trans}.
Variations $\delta A\in L^2$ give
\[
0=\int_0^T\!\langle 2A^\star-2\Lambda\Sigma^\star,\delta A\rangle_F\,dt
\quad\forall\,\delta A\in L^2,
\]
hence \eqref{eq:KKT-a1-stationarity} holds a.e.
The primal equation \eqref{eq:KKT-a1-primal} holds by feasibility.
\end{proof}

\subsection{Regularity of minimizers}

\begin{thm}[Smoothness in the spatial case]\label{thm:reg-alpha1}
Let $(A^\star,\Sigma^\star)$ be a  minimizer of
Problem~\ref{problem:case1-min-attention} with multiplier $\Lambda$ from
Theorem~\ref{thm:KKT-alpha1}. Then
\[
A^\star,\Sigma^\star,\Lambda\in C^\infty([0,T]).
\]
\end{thm}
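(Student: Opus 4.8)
The plan is to run the same bootstrap argument as in Theorem~\ref{thm:reg-analytic}, adapted to the simpler structure of the spatial case. The three governing equations are now \eqref{eq:KKT-a1-primal}--\eqref{eq:KKT-a1-stationarity}, together with the uniform spectral bounds $cI\le\Sigma_t^\star\le CI$ from Theorem~\ref{thm:exist-alpha1}, which guarantee that $(\Sigma^\star)^{-1}\in L^\infty([0,T];\mathbb S^n)$ and, once we know $\Sigma^\star\in W^{k,1}$, that $(\Sigma^\star)^{-1}\in W^{k,1}$ as well (differentiating the identity $(\Sigma^\star)^{-1}\Sigma^\star=I$ and using the Leibniz rule). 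The key structural difference from the mixed case is that the stationarity condition \eqref{eq:KKT-a1-stationarity} is now algebraic, $A^\star=\Lambda\Sigma^\star$, rather than involving a time derivative of $\dot A^\star\Sigma^\star$; this actually simplifies the induction, since $A^\star$ inherits its regularity directly from $\Lambda$ and $\Sigma^\star$ via a product.

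I would first record a base case. From Theorem~\ref{thm:KKT-alpha1} we already have $\Lambda\in W^{1,1}\hookrightarrow C^0$; since $\Sigma^\star\in H^1\hookrightarrow C^0$, the stationarity relation gives $A^\star=\Lambda\Sigma^\star\in C^0$ (in fact we can already push this to $W^{1,1}$: $\dot A^\star=\dot\Lambda\Sigma^\star+\Lambda\dot\Sigma^\star$, where $\dot\Lambda\in L^1$, $\Sigma^\star\in L^\infty$, $\Lambda\in L^\infty$, $\dot\Sigma^\star\in L^2\subset L^1$, so $\dot A^\star\in L^1$). Then the Lyapunov equation \eqref{eq:KKT-a1-primal} gives $\dot\Sigma^\star=A^\star\Sigma^\star+\Sigma^\star A^{\star\top}+BB^\top$ with right-hand side in $C^0$, so $\Sigma^\star\in W^{1,\infty}\subset W^{1,1}$ with $\dot\Sigma^\star\in C^0$, hence $\Sigma^\star\in C^1$. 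This establishes the base case $m=1$ of the claim $\Lambda\in W^{m,1}$, $A^\star\in W^{m,1}$, $\Sigma^\star\in W^{m+1,1}$ (note the index shift relative to Lemma~\ref{thm:reg-upgrade} because stationarity is algebraic here, not differential).

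For the inductive step, assuming the claim at level $m$: (a) differentiate the adjoint equation \eqref{eq:KKT-a1-adjoint} $m$ times using the distributional Leibniz rule \eqref{eq:leibniz}; each term $\Lambda^{\star(\ell)}A^{\star(m-\ell)}$ has at most one top-order factor ($\Lambda^{(m)}\in L^1$ or $A^{(m)}\in L^1$, via $W^{m,1}\hookrightarrow C^{m-1}$ for the rest), so $\dot\Lambda^{(m)}\in L^1$ and $\Lambda\in W^{m+1,1}$; (b) differentiate the algebraic stationarity relation $A^\star=\Lambda\Sigma^\star$ exactly $m+1$ times, again via \eqref{eq:leibniz} — now $\Lambda\in W^{m+1,1}$ from part (a) and $\Sigma^\star\in W^{m+1,1}$ from the inductive hypothesis, so $A^{\star(m+1)}\in L^1$ and $A^\star\in W^{m+1,1}$; (c) differentiate the Lyapunov equation \eqref{eq:KKT-a1-primal} $m+1$ times; with $A^\star\in W^{m+1,1}$ from (b) and $\Sigma^\star\in W^{m+1,1}$, every product has at most one $L^1$ factor, so $\Sigma^{\star(m+2)}\in L^1$ and $\Sigma^\star\in W^{m+2,1}$. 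This closes the induction, so $A^\star,\Sigma^\star,\Lambda\in W^{k,1}$ for every $k$, and the Sobolev embedding $W^{k,1}([0,T])\hookrightarrow C^{k-1}([0,T])$ gives $A^\star,\Sigma^\star,\Lambda\in C^\infty([0,T])$.

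I do not expect a genuine obstacle here, since the spatial case is structurally easier than the mixed case already handled — the only thing to be careful about is the correct index bookkeeping (because stationarity is algebraic, $A^\star$ and $\Lambda$ sit at the same Sobolev level, unlike in Lemma~\ref{thm:reg-upgrade}) and the justification that $(\Sigma^\star)^{-1}$ gains the same regularity as $\Sigma^\star$ via the lower bound $\Sigma^\star\ge cI$; in fact, since in this argument $A^\star$ is obtained by multiplying rather than by solving $f=\dot A^\star\Sigma^\star$ for $\dot A^\star$, we do not even strictly need $(\Sigma^\star)^{-1}$ in the bootstrap, only in writing down $A^\star$ explicitly should one wish to — so the argument is, if anything, cleaner. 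The proof can therefore be written quite compactly, referring back to Theorem~\ref{thm:reg-analytic} for the mechanics of the Leibniz-rule term-counting and only spelling out the three equations with their (shifted) indices.
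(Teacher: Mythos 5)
Your proposal is correct and runs essentially the same bootstrap as the paper, differing mainly in bookkeeping. The paper's proof first lifts to $C^0$ (Step~1), then to genuine $C^1$ regularity of $\Lambda$ and $\Sigma^\star$ using the pointwise adjoint/primal ODEs (Step~2), and then runs a clean $C^m\Rightarrow C^{m+1}$ bootstrap on the pair $(\Sigma^\star,\Lambda)$ only (Step~3), recovering $A^\star=\Lambda\Sigma^\star$ as a product at the end. You instead carry all three unknowns through a $W^{m,1}$-level induction, mirroring the mechanics of Theorem~\ref{thm:reg-analytic} for the mixed case. Both are valid; the paper's $C^k$-level bootstrap is arguably lighter once continuity is in hand, since products of $C^m$ functions are $C^m$ without the one-$L^1$-factor accounting, whereas your version is closer in form to the proof already written for the mixed case and is arguably more self-contained about where the integrability comes from. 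Your observation that no inversion of $\Sigma^\star$ is needed (because stationarity is algebraic) is correct and is indeed what the paper leverages implicitly.

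One small stitch is missing in your base case: you establish $\dot\Sigma^\star\in C^0$ (hence $\Sigma^\star\in C^1$) and then assert $\Sigma^\star\in W^{2,1}$, but $C^1$ does not by itself give a second weak derivative in $L^1$. The fix is already at hand: differentiate the Lyapunov equation once and observe
$\ddot\Sigma^\star=\dot A^\star\Sigma^\star+A^\star\dot\Sigma^\star+\dot\Sigma^\star A^{\star\top}+\Sigma^\star\dot A^{\star\top}\in L^1$,
since you have already shown $\dot A^\star\in L^1$, $A^\star,\Sigma^\star\in L^\infty$, and $\dot\Sigma^\star\in L^2\subset L^1$. With this line inserted, the base case $\Lambda\in W^{1,1}$, $A^\star\in W^{1,1}$, $\Sigma^\star\in W^{2,1}$ is complete and the induction proceeds exactly as you describe.
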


\begin{proof}
\emph{Step 1: Initial lift to continuity.}
From Theorem~\ref{thm:exist-alpha1}, $\Sigma^\star\in H^1([0,T])\hookrightarrow C^0$
and satisfies $cI\le \Sigma_t^\star\le CI$.
By Theorem~\ref{thm:KKT-alpha1},
$A^\star\in L^2$ and $\Lambda\in L^2$ with
\[
-\dot\Lambda=\Lambda A^\star+A^{\star\top}\Lambda \quad\text{ a.e. }H^{-1}.
\]
Since $\Lambda,A^\star\in L^2$ and $L^2\cdot L^2\subset L^1$,
the right--hand side lies in $L^1$, hence $\dot\Lambda\in L^1$ and
$\Lambda\in W^{1,1}\hookrightarrow C^0$.
Stationarity $A^\star=\Lambda\Sigma^\star$ then gives $A^\star\in C^0$.

\emph{Step 2: Classical $C^1$ regularity.}
With $A^\star,\Sigma^\star\in C^0$, the primal equation gives
$\dot\Sigma^\star\in C^0$, hence $\Sigma^\star\in C^1$.
Similarly, the adjoint equation gives $\dot\Lambda\in C^0$, so $\Lambda\in C^1$.
Therefore $A^\star=\Lambda\Sigma^\star\in C^1$.

\emph{Step 3: Bootstrap to $C^\infty$.}
Differentiate the pointwise FONC system repeatedly. At each order $m\ge 0$,
Leibniz' rule yields expressions for $\Sigma^{\star\,(m+1)}$ and $\Lambda^{(m+1)}$
as finite sums of products of derivatives of order at most $m$ of
$\Sigma^\star$ and $\Lambda$ (and the constant matrix $BB^\top$).
Since the right-hand sides are smooth combinations of already-smooth terms,
standard ODE bootstrapping implies
$\Sigma^\star,\Lambda\in C^{m+1}$ whenever they are in $C^{m}$.
By induction, $\Sigma^\star,\Lambda\in C^\infty$, and then
$A^\star=\Lambda\Sigma^\star\in C^\infty$ as well.
\end{proof}

\begin{remark}[Comparison with $0<\alpha<1$]
In the mixed case, smoothness arises from a Sobolev bootstrap driven by the
$\dot A$--penalty. For $\alpha=1$ there is no coercive control on $\dot A$ at the energy
level, but the algebraic stationarity $A=\Lambda\Sigma$ couples the primal and adjoint
flows into a polynomial ODE system, which still forces $C^\infty$ regularity once
$\Sigma$ stays uniformly SPD.
\end{remark}

\subsection{Zero--noise limit}
\label{subsec:zero-noise}

We now specialize to the case where $B\equiv0$. It is of interest since the FONC system becomes autonomous and admits explicit solutions.

\begin{prop}[FONC structure and closed forms]\label{prop:ZN-structure}
Assume $B\equiv0$. Let $(A^\star,\Sigma^\star,\Lambda^\star)$ satisfy
\eqref{eq:KKT-alpha1}, i.e.,
\begin{align*}
\dot\Sigma^\star&=A^\star\Sigma^\star+\Sigma^\star A^{\star\top},\\
-\dot\Lambda^\star&=\Lambda^\star A^\star+A^{\star\top}\Lambda^\star,\\
A^\star&=\Lambda^\star\Sigma^\star .
\end{align*}
Write $A^\star_t=S_t+\Omega_t$ with
$S_t=\tfrac12(A^\star_t+A^{\star\top}_t)\in\mathbb S^n$ and
$\Omega_t=\tfrac12(A^\star_t-A^{\star\top}_t)\in\mathbb A^n$.
Then:
\begin{enumerate}
\item[\textnormal{(i)}] $\Omega$ is constant in time and $\tr A^\star_t\equiv\tr A^\star_0$.
\item[\textnormal{(ii)}] $S$ solves $\dot S_t=2(\Omega S_t-S_t\Omega)$, hence
\begin{equation}\label{eq:A-closed}
A^\star_t=e^{2\Omega t}\,S_0\,e^{-2\Omega t}+\Omega .
\end{equation}
\item[\textnormal{(iii)}] Let 
$Z_t:=e^{-2\Omega t}\Sigma^\star_te^{2\Omega t}$. Then
\[
\dot Z_t=A_0^{\star\top}Z_t+Z_t A_0^\star,\qquad
Z_t=e^{A_0^{\star\top}t}\Sigma_0\,e^{A_0 ^\star t},
\]
so
\begin{equation}\label{eq:Sigma-closed}
\Sigma^\star_t=e^{2\Omega t}\,e^{A_0^{\star\top}t}\,\Sigma_0\,e^{A_0^\star t}\,e^{-2\Omega t},
\end{equation}
and in particular
\begin{equation}\label{eq:SigmaT-closed}
\Sigma_T^\star
= e^{2\Omega T}\,e^{A_0^{\star\top}T}\,\Sigma_0\,e^{A_0 ^\star T}\,e^{-2\Omega T}.
\end{equation}
\end{enumerate}
\end{prop}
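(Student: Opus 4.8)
The plan is to collapse the coupled first-order system into a single autonomous matrix ODE for $A^\star$, integrate it explicitly, and then propagate the result through the Lyapunov equation for $\Sigma^\star$. First I would record that the triple is smooth: by Theorem~\ref{thm:reg-alpha1} (or by a short bootstrap from the FONC exactly as in its proof), $A^\star,\Sigma^\star,\Lambda^\star\in C^\infty$, so the differentiations below are classical. Differentiating the stationarity identity $A^\star=\Lambda^\star\Sigma^\star$ and inserting the primal equation $\dot\Sigma^\star=A^\star\Sigma^\star+\Sigma^\star A^{\star\top}$ and the adjoint equation $-\dot\Lambda^\star=\Lambda^\star A^\star+A^{\star\top}\Lambda^\star$, the two occurrences of $\Lambda^\star A^\star\Sigma^\star$ cancel and, using $\Lambda^\star\Sigma^\star=A^\star$ once more,
\[
\dot A^\star=\Lambda^\star\Sigma^\star A^{\star\top}-A^{\star\top}\Lambda^\star\Sigma^\star=A^\star A^{\star\top}-A^{\star\top}A^\star=[A^\star,A^{\star\top}].
\]
This cancellation is the one substantive step; everything afterwards is bookkeeping with matrix exponentials.

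For (i) and (ii): the right-hand side $[A^\star,A^{\star\top}]=A^\star A^{\star\top}-A^{\star\top}A^\star$ is a difference of two symmetric matrices, hence symmetric, so $\dot A^\star\in\mathbb{S}^n$. Writing $A^\star=S+\Omega$ with $S$ symmetric and $\Omega$ antisymmetric and comparing antisymmetric parts forces $\dot\Omega=0$, i.e.\ $\Omega$ is constant; taking traces gives $\tr\dot A^\star=\tr[A^\star,A^{\star\top}]=0$, so $\tr A^\star_t\equiv\tr A^\star_0$, which is (i). Substituting $A^\star=S+\Omega$, $A^{\star\top}=S-\Omega$ into $\dot A^\star=[A^\star,A^{\star\top}]$, the $S^2$ and $\Omega^2$ terms drop out and one gets $\dot A^\star=2(\Omega S-S\Omega)$; since $\dot\Omega=0$ this reads $\dot S=2(\Omega S-S\Omega)$, a linear ODE whose unique solution with value $S_0$ at $t=0$ is $S_t=e^{2\Omega t}S_0e^{-2\Omega t}$ (verified by differentiation, using that $\Omega$ commutes with $e^{2\Omega t}$). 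Adding the constant $\Omega$ gives \eqref{eq:A-closed}.

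For (iii): define $Z_t:=e^{-2\Omega t}\Sigma^\star_te^{2\Omega t}$, so $Z_0=\Sigma_0$. Differentiating and substituting $\dot\Sigma^\star=A^\star\Sigma^\star+\Sigma^\star A^{\star\top}$ together with the closed form $A^\star_t=e^{2\Omega t}S_0e^{-2\Omega t}+\Omega$ from (ii), I would commute $\Omega$ through the relevant exponentials so that the $e^{\pm2\Omega t}$ factors produced by differentiating $Z$ cancel against those carried by $A^\star$; what survives is
\[
\dot Z_t=(S_0-\Omega)Z_t+Z_t(S_0+\Omega)=A_0^{\star\top}Z_t+Z_tA_0^\star,
\]
a Lyapunov-type linear matrix ODE with initial value $\Sigma_0$, whose unique solution is $Z_t=e^{A_0^{\star\top}t}\Sigma_0e^{A_0^\star t}$. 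Undoing the conjugation, $\Sigma^\star_t=e^{2\Omega t}Z_te^{-2\Omega t}$, yields \eqref{eq:Sigma-closed}, and \eqref{eq:SigmaT-closed} is the case $t=T$. The only non-computational inputs are the uniqueness theorem for linear matrix ODEs and the smoothness of the triple; accordingly the main obstacle is organizational — arranging the algebra so that all the exponential conjugations visibly cancel — rather than conceptual.
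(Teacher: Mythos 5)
Your proof is correct and follows essentially the same route as the paper's: differentiate $A^\star=\Lambda^\star\Sigma^\star$, substitute the primal and adjoint equations to obtain $\dot A^\star=A^\star A^{\star\top}-A^{\star\top}A^\star$, then split into symmetric and skew parts for (i)--(ii) and conjugate $\Sigma^\star$ by $e^{-2\Omega t}$ for (iii). The only cosmetic difference is that you invoke smoothness explicitly up front (which the paper leaves implicit, having established it in Theorem~\ref{thm:reg-alpha1}); otherwise the two arguments are step-for-step the same.
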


\begin{proof}
Differentiate $A^\star=\Lambda^\star\Sigma^\star$ and substitute the primal and adjoint
equations:
\begin{align*}
\dot A^\star
&=\dot\Lambda^\star\Sigma^\star+\Lambda^\star\dot\Sigma^\star\\
&=-(\Lambda^\star A^\star+A^{\star\top}\Lambda^\star)\Sigma^\star
+\Lambda^\star(A^\star\Sigma^\star+\Sigma^\star A^{\star\top})\\
&=-A^{\star\top}(\Lambda^\star\Sigma^\star)+(\Lambda^\star\Sigma^\star)A^{\star\top}\\
&=A^\star A^{\star\top}-A^{\star\top}A^\star .
\end{align*}
The right-hand side is symmetric and traceless, so the skew part
$\Omega=\tfrac12(A^\star-A^{\star\top})$ is constant and
$\tfrac d{dt}\tr A^\star=0$, proving (i).
Taking symmetric parts yields $\dot S=2(\Omega S-S\Omega)$, whose solution is
$S_t=e^{2\Omega t}S_0 e^{-2\Omega t}$, hence \eqref{eq:A-closed}.
For (iii), define $Z_t=e^{-2\Omega t}\Sigma^\star_t e^{2\Omega t}$ and differentiate;
using \eqref{eq:A-closed} one obtains $\dot Z_t=A_0^{^\star\top}Z_t+Z_tA_0^\star$ with explicit solution
$Z_t=e^{A_0^{\star\top}t}\Sigma_0e^{A_0^\star t}$, which yields
\eqref{eq:Sigma-closed}--\eqref{eq:SigmaT-closed}.
\end{proof}

\section{Temporal attention}\label{sec:temporal}

We now consider the limiting regime $\alpha=0$, in which the attention cost reduces to
\[
J_0(A,\Sigma)=\int_0^T \mathrm{tr}(\dot A_t\,\Sigma_t\,\dot A_t^\top)\,dt .
\]
Since $J_0$ depends only on $\dot A$, every \emph{constant} control $A$ that steers the
covariance from $\Sigma_0$ to $\Sigma_T$ is a  minimizer of $J_0$ (indeed
$J_0=0$ whenever $\dot A\equiv0$). The temporal problem is therefore degenerate:
the set of minimizers coincides with the set of constant feasible controls.

To extract a meaningful selection among these constant minimizers, we adopt a standard
perturbative viewpoint. For $\varepsilon\in(0,1)$, consider instead
\[
J_{\varepsilon}(A,\Sigma)=\varepsilon\!\int_0^{T}\!\!\mathrm{tr}(A_tA_t^{\top})\,dt
+(1-\varepsilon)\!\int_0^{T}\!\!\mathrm{tr}(\dot A_t\Sigma_t\dot A_t^{\top})\,dt .
\]
As $\varepsilon\downarrow0$, the temporal term dominates and optimal controls flatten in
time. At the next order, one expects selection among constant feasible controls by the
spatial energy $\mathrm{tr}(AA^\top)$. This motivates the reduced finite-dimensional
problem.

\begin{problem}[Temporally--selected constant control]\label{prob:temporal-const}
Given $T>0$, $B\in\mathbb{R}^{n\times m}$, and
$\Sigma_{\init},\Sigma_{\fin}\in\mathbb{S}_{++}^n$, determine
\[
\min_{A\in\mathbb{R}^{n\times n}}\ \mathrm{tr}(AA^{\top})
\]
subject to the Lyapunov dynamics with \emph{constant} $A$,
\begin{equation}\label{eq:const-lyap}
\dot\Sigma_t=A\Sigma_t+\Sigma_tA^{\top}+BB^{\top},~~
\Sigma_0=\Sigma_\init,~~ \Sigma_{T}=\Sigma_\fin .
\end{equation}
We denote by $\mathcal{F}_{\mathrm{const}}$ the (possibly empty) set of constant
matrices $A$ for which \eqref{eq:const-lyap} satisfies the endpoint constraint.
\end{problem}
\subsection{Existence of minimizers}

\begin{thm}[Existence of minimizers]
\label{thm:temporal-selection}
Let $\alpha=\varepsilon\in(0,1)$ and let $(A_\varepsilon,\Sigma_\varepsilon)$ be 
minimizers of $J_{\varepsilon}$ subject to the Lyapunov dynamics and endpoints.
Assume $\mathcal F_{\rm const}\neq \emptyset$. Then there is a sequence
$\varepsilon_k\downarrow0$ and $A^0\in\mathcal F_{\rm const}$ such that
\[
A_{\varepsilon_k}\to A^0 \ \text{in } C^0([0,T]),\qquad
\Sigma_{\varepsilon_k}\to \Sigma^0 \ \text{in } C^0([0,T]),
\]
where $(A^0,\Sigma^0)$ is feasible for \eqref{eq:const-lyap}. Moreover, $A^0$ minimizes
$\trace(AA^\top)$
over all constant feasible controls. In particular,
Problem~\ref{prob:temporal-const} admits a  minimizer.
\end{thm}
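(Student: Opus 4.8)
The plan is to run a $\Gamma$-convergence-type selection argument, extracting the constant minimizer as a subsequential limit of minimizers $(A_\varepsilon,\Sigma_\varepsilon)$ of $J_\varepsilon$, whose existence for each $\varepsilon\in(0,1)$ is granted by Theorem~\ref{eq:thm}. First I would fix a recovery competitor: by hypothesis there is $\bar A\in\mathcal F_{\rm const}$; let $\bar\Sigma$ be the constant-control Lyapunov trajectory it generates, so $(\bar A,\bar\Sigma)$ is feasible and, since $\dot{\bar A}\equiv 0$, $J_\varepsilon(\bar A,\bar\Sigma)=\varepsilon T\,\trace(\bar A\bar A^\top)$. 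Minimality then gives $J_\varepsilon(A_\varepsilon,\Sigma_\varepsilon)\le \varepsilon T\,\trace(\bar A\bar A^\top)$; since both terms of $J_\varepsilon$ are nonnegative, discarding the temporal term and dividing by $\varepsilon$ yields the \emph{$\varepsilon$-uniform} bound $\int_0^T\|A_\varepsilon\|_F^2\,\mathrm{d}t\le T\,\trace(\bar A\bar A^\top)$, while discarding the spatial term gives $(1-\varepsilon)\int_0^T\trace(\dot A_\varepsilon\Sigma_\varepsilon\dot A_\varepsilon^\top)\,\mathrm{d}t\le \varepsilon T\,\trace(\bar A\bar A^\top)$.

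Next I would transfer these bounds into compactness. The $\varepsilon$-uniform $L^2$ bound on $A_\varepsilon$ feeds the Gr\"onwall/fundamental-matrix estimate of Theorem~\ref{eq:thm} verbatim (it depends on $A_\varepsilon$ only through $\|A_\varepsilon\|_{L^1}\le\sqrt T\,\|A_\varepsilon\|_{L^2}$), producing spectral bounds $cI\le\Sigma_\varepsilon(t)\le CI$ for all $t$ with $c,C$ \emph{independent of $\varepsilon$}. Inserting $\Sigma_\varepsilon\ge cI$ into the temporal estimate gives $(1-\varepsilon)\,c\,\|\dot A_\varepsilon\|_{L^2}^2\le \varepsilon T\,\trace(\bar A\bar A^\top)$, so $\|\dot A_\varepsilon\|_{L^2}\to 0$. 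Hence $\{A_\varepsilon\}$ is bounded in $H^1$, and Rellich--Kondrachov furnishes a sequence $\varepsilon_k\downarrow 0$ and $A^0$ with $A_{\varepsilon_k}\to A^0$ in $C^0$ and $A_{\varepsilon_k}\rightharpoonup A^0$ in $H^1$; since $\dot A_{\varepsilon_k}\to 0$ strongly in $L^2$ and weak limits are unique, $\dot A^0=0$, i.e.\ $A^0$ is constant. From the Lyapunov equation, $\|\dot\Sigma_{\varepsilon_k}\|_{L^2}\le 2\|\Sigma_{\varepsilon_k}\|_{L^\infty}\|A_{\varepsilon_k}\|_{L^2}+\sqrt T\,\|BB^\top\|_F$ is uniformly bounded, so $\{\Sigma_{\varepsilon_k}\}$ is bounded in $H^1$ and, along a further subsequence, $\Sigma_{\varepsilon_k}\to\Sigma^0$ in $C^0$.

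It then remains to identify the limit and establish optimality. The endpoint constraints pass to the limit by uniform convergence. Writing the Lyapunov equation in integral form $\Sigma_{\varepsilon_k}(t)=\Sigma_\init+\int_0^t(A_{\varepsilon_k}\Sigma_{\varepsilon_k}+\Sigma_{\varepsilon_k}A_{\varepsilon_k}^\top+BB^\top)\,\mathrm{d}s$ and passing $k\to\infty$ — the integrand converges uniformly because $A_{\varepsilon_k},\Sigma_{\varepsilon_k}$ converge uniformly and are uniformly bounded — shows $\Sigma^0\in C^1$ solves $\dot\Sigma^0=A^0\Sigma^0+\Sigma^0 A^{0\top}+BB^\top$ with $A^0$ constant; hence $A^0\in\mathcal F_{\rm const}$ and $(A^0,\Sigma^0)$ is feasible for \eqref{eq:const-lyap}. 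Finally, for any competitor $\tilde A\in\mathcal F_{\rm const}$ the first paragraph's bound (with $\tilde A$ in place of $\bar A$) gives $\int_0^T\|A_{\varepsilon_k}\|_F^2\,\mathrm{d}t\le T\,\trace(\tilde A\tilde A^\top)$; letting $k\to\infty$ and using $A_{\varepsilon_k}\to A^0$ in $C^0$ with $A^0$ constant yields $T\,\trace(A^0 A^{0\top})=\lim_k\int_0^T\|A_{\varepsilon_k}\|_F^2\,\mathrm{d}t\le T\,\trace(\tilde A\tilde A^\top)$. Thus $A^0$ minimizes $\trace(AA^\top)$ over $\mathcal F_{\rm const}$, so Problem~\ref{prob:temporal-const} admits a minimizer.

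I expect the main obstacle to be ensuring the spectral bounds on $\Sigma_\varepsilon$ are \emph{uniform in $\varepsilon$}: this hinges on the cancellation of the factor $\varepsilon$ between the coercive term $\varepsilon\int\|A_\varepsilon\|_F^2$ in $J_\varepsilon$ and the upper bound $J_\varepsilon(A_\varepsilon,\Sigma_\varepsilon)\le\varepsilon T\,\trace(\bar A\bar A^\top)$, and it is precisely what makes the Gr\"onwall constants $\varepsilon$-free, delivers $H^1$-compactness of $\{A_\varepsilon\}$, and — through $\Sigma_\varepsilon\ge cI$ — forces $\dot A_\varepsilon\to 0$, pinning the limit to be constant. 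A secondary subtlety worth stating carefully is that the competitor inequality must use the same $\varepsilon$ on both sides, so that no $\varepsilon$-dependence survives in the limiting comparison.
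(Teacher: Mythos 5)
Your proposal is correct and follows essentially the same route as the paper: the same competitor inequality against a constant feasible $\bar A$, the same cancellation of the $\varepsilon$ factor to obtain $\varepsilon$-uniform spectral bounds on $\Sigma_\varepsilon$, the same use of those bounds to force $\dot A_\varepsilon\to 0$ in $L^2$, and the same compactness/passage-to-the-limit and selection-minimality steps. The only differences are cosmetic (you pass to the limit in the integral form of the Lyapunov equation and extract $\Sigma$-compactness from an $H^1$ bound plus Rellich, where the paper uses the differential form and Arzel\`a--Ascoli), so there is no substantive divergence from the paper's argument.
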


\begin{proof}
Fix any constant feasible pair $(\bar A,\bar\Sigma)$ with $\bar A\in\mathcal F_{\rm const}$.
By optimality of $(A_\varepsilon,\Sigma_\varepsilon)$,
\begin{equation}\label{eq:comp-const}
J_\varepsilon(A_\varepsilon,\Sigma_\varepsilon)
\le J_\varepsilon(\bar A,\bar\Sigma)
=\varepsilon T\|\bar A\|_F^2 .
\end{equation}

\smallskip\noindent
\emph{Step 1: uniform bounds and flattening of $A_\varepsilon$.}
From \eqref{eq:comp-const},
\[
\varepsilon\int_0^T\|A_\varepsilon(t)\|_F^2\,dt
\le \varepsilon T\|\bar A\|_F^2
\quad\Rightarrow\quad
\|A_\varepsilon\|_{L^2}^2\le T\|\bar A\|_F^2 .
\]
Also,
\begin{align*}
&(1-\varepsilon)\int_0^T
\tr(\dot A_\varepsilon\Sigma_\varepsilon\dot A_\varepsilon^\top)\,dt
\le \varepsilon T\|\bar A\|_F^2
\\
&\Rightarrow\quad
\int_0^T
\tr(\dot A_\varepsilon\Sigma_\varepsilon\dot A_\varepsilon^\top)\,dt
=O(\varepsilon).
\end{align*}

To convert this into $\|\dot A_\varepsilon\|_{L^2}\to0$, we use a uniform
lower bound on $\Sigma_\varepsilon$.
Applying Theorem~\ref{eq:thm} with $\alpha=\varepsilon$ gives
\[
\Sigma_\varepsilon(t)\ge c_\varepsilon I,\qquad
c_\varepsilon
= e^{-2\sqrt{T J_\varepsilon^\star/\varepsilon}}
\,\lambda_{\min}(\Sigma_0),
\]
where $J_\varepsilon^\star=\inf J_\varepsilon$.
Since $J_\varepsilon^\star\le J_\varepsilon(\bar A,\bar\Sigma)
=\varepsilon T\|\bar A\|_F^2$, we have
\[
\frac{J_\varepsilon^\star}{\varepsilon}\le T\|\bar A\|_F^2
\quad\Rightarrow\quad
c_\varepsilon\ge
e^{-2\sqrt{T\cdot T\|\bar A\|_F^2}}\lambda_{\min}(\Sigma_0)
=:c>0,
\]
with $c$ independent of $\varepsilon$.
Therefore
\[
\int_0^T\|\dot A_\varepsilon(t)\|_F^2\,dt
\le \frac1c\int_0^T
\tr(\dot A_\varepsilon\Sigma_\varepsilon\dot A_\varepsilon^\top)\,dt
\longrightarrow 0 .
\]
Thus $\dot A_\varepsilon\to0$ in $L^2(0,T)$ and $\{A_\varepsilon\}$ is bounded in
$H^1(0,T)$. By 1D Rellich compactness, after extracting a sequence
$\varepsilon_k\downarrow0$,
\[
A_{\varepsilon_k}\to A^0\quad\text{in }C^0([0,T]),
\]
for some $A^0\in H^1$. Since $\dot A_{\varepsilon_k}\to0$ in $L^2$, the limit $A^0$
must be constant.

\smallskip\noindent
\emph{Step 2: convergence of $\Sigma_{\varepsilon_k}$ and feasibility of the limit.}
Each $\Sigma_{\varepsilon}$ solves
\[
\dot\Sigma_\varepsilon
=A_\varepsilon\Sigma_\varepsilon+\Sigma_\varepsilon A_\varepsilon^\top+BB^\top,
\qquad \Sigma_\varepsilon(0)=\Sigma_0 .
\]
Uniform spectral bounds from Theorem~\ref{eq:thm} give $\|\Sigma_\varepsilon\|_{C^0}\le C$,
and the Lyapunov equation gives equicontinuity. Hence, by Arzel\`a--Ascoli,
up to a subsequence,
\[
\Sigma_{\varepsilon_k}\to \Sigma^0 \quad\text{in }C^0([0,T]).
\]
Let $E_k:=\Sigma_{\varepsilon_k}-\Sigma^0$. Then
\[
\dot E_k
=A_{\varepsilon_k}E_k+E_kA_{\varepsilon_k}^\top
+(A_{\varepsilon_k}-A^0)\Sigma^0+\Sigma^0(A_{\varepsilon_k}-A^0)^\top.
\]
Since $A_{\varepsilon_k}\to A^0$ in $C^0$, and $\|E_k\|_{C^0}\to0$,
$\Sigma^0$ satisfies the Lyapunov equation with constant drift $A^0$.
Moreover, because each $\Sigma_{\varepsilon_k}(T)=\Sigma_{\rm fin}$ and convergence is uniform,
$\Sigma^0(T)=\Sigma_{\rm fin}$. Hence $A^0\in\mathcal F_{\rm const}$ and $(A^0,\Sigma^0)$ is feasible.

\smallskip\noindent
\emph{Step 3: selection minimality.}
Let $A\in\mathcal F_{\rm const}$ be any constant feasible control with trajectory $\Sigma$.
Since $\dot A\equiv0$,
\[
J_\varepsilon(A,\Sigma)=\varepsilon T\|A\|_F^2 .
\]
Optimality yields
\[
J_{\varepsilon_k}(A_{\varepsilon_k},\Sigma_{\varepsilon_k})
\le J_{\varepsilon_k}(A,\Sigma)
=\varepsilon_k T\|A\|_F^2 .
\]
Divide by $\varepsilon_k$ and drop the nonnegative temporal term:
\[
\int_0^T\|A_{\varepsilon_k}(t)\|_F^2\,dt
\le T\|A\|_F^2 .
\]
Passing to the limit using $A_{\varepsilon_k}\to A^0$ in $L^2$ (since $C^0$ convergence
implies $L^2$ convergence),
\[
T\|A^0\|_F^2
=\int_0^T\|A^0\|_F^2\,dt
\le T\|A\|_F^2 .
\]
Since $A$ was arbitrary in $\mathcal F_{\rm const}$, $A^0$ minimizes $\trace(AA^\top)$
over $\mathcal F_{\rm const}$. This also proves existence of a minimizer
for Problem~\ref{prob:temporal-const}.
\end{proof}

\begin{remark}
    In earlier sections we utilized the FONC to upgrade the regularity of the minimizer. Herein, however, the minimizer $A^\star$ is constant and therefore $\Sigma^\star$ is smooth. Nevertheless, the adjoint equation and stationarity conditions are given below for completeness:
    \begin{align*}
       -\dot\Lambda_t&=A^{\star\top}\Lambda_t+\Lambda_tA^\star,\\
2TA^\star &+ \int_0^T\!\big(\Lambda_t\Sigma_t^\star+\Sigma_t^\star\Lambda_t\big)\,\text{d}t=0,
    \end{align*}
    where $\Lambda:[0,T]\rightarrow \mathbb{S}^n$ is the (symmetric) adjoint variable. 
\end{remark}

\begin{remark}
    In the zero-noise limit, where $B\equiv 0$, a feasible (constant) $A$
    must satisfy $\Sigma_T=e^{AT}\Sigma_0 e^{A^\top T}$. Thereby, it must be of the form
    \begin{equation}\label{eq:orth-param}
e^{AT}=\Sigma_T^{1/2}\,R\,\Sigma_0^{-1/2},\qquad R\in O(n).
\end{equation}
Thus, Problem \ref{prob:temporal-const} can be recast as
\begin{equation}\label{eq:temporal-procrustes}
\min_{R\in O(n)}\ \Big\|\log\!\big(\Sigma_T^{1/2}R\Sigma_0^{-1/2}\big)\Big\|_F^2.
\end{equation}
This is a nonconvex ``logarithmic Procrustes'' problem over $O(n)$, and
does not generally admit a closed-form minimizer.
\end{remark}

 While the selection based on $\|A\|_F^2$ is natural from the temporal--attention viewpoint,
it does not acknowledge the covariance structure, in general. Thus, it is of interest to explore connection with information-theoretic alternatives. This is done next.

\section{Attention functional and the Fisher-Rao geodesic in the zero-noise limit}\label{sec:fisher}
In this section, we discuss a connection between the attention functional $J_\alpha$, $\alpha\in[0,1]$, defined in \eqref{eq:J}, and the closeness of its minimizer ($A^\star,\Sigma^\star$) to the canonical pair $(A^F,\Sigma^F)$ that induces the Fisher-Rao geodesic \cite{AmariNagaoka2000,Bhatia2007}
$$\Sigma_t^F=\Sigma_0^{1/2} M^{t/T}\Sigma_0^{1/2},~~t\in[0,T],$$
between $\Sigma_0$ and $\Sigma_T$ in the zero-noise limit (i.e., $B=0$), where
$M:=\Sigma_0^{-1/2}\Sigma_T\Sigma_0^{-1/2}>0$. To this end, we characterize first all Fisher geodesic-inducing \textit{constant} control matrices.
\begin{prop}[Fisher-inducing generators]
Let $T>0$, $\Sigma_0,\Sigma_T\in\mathbb S_{+}^n$.
 Then, the set of matrices $A\in\mathbb R^{n\times n}$ satisfying $\dot{\Sigma}_t^F =A\Sigma_t^F+\Sigma_t^F A^\top,t\in[0,T]$, is given by the affine space 
\begin{align*}
   \mathcal{A} :=  A^F+\Sigma_0^{1/2}L_M\Sigma_0^{-1/2},
\end{align*}
where  $A^F:=\frac{1}{2T}\Sigma_0^{1/2}(\log M)\Sigma_0^{-1/2}$ and
\begin{align*}
    L_M:=\{X\in\mathbb{A}^n~|~[X,M]=0\}.
\end{align*}
In particular, $A$ is unique (i.e., $L_M=\{0\}$) whenever $M$ has simple spectrum, and the unique choice with $A\Sigma_0=\Sigma_0A^\top$ is precisely $A=A^F$.
\end{prop}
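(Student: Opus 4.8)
The plan is to reduce the matrix identity to a commutation relation by conjugating out $\Sigma_0^{1/2}$. First I would set $Y_t:=\Sigma_0^{-1/2}\Sigma_t^F\Sigma_0^{-1/2}=M^{t/T}$ and, for a candidate $A$, introduce $\tilde A:=\Sigma_0^{-1/2}A\Sigma_0^{1/2}$. Since $\Sigma_t^F=\Sigma_0^{1/2}Y_t\Sigma_0^{1/2}$, premultiplying and postmultiplying the identity $\dot\Sigma_t^F=A\Sigma_t^F+\Sigma_t^F A^\top$ by $\Sigma_0^{-1/2}$ and using $\Sigma_0^{1/2}A^\top\Sigma_0^{-1/2}=\tilde A^\top$ turns the problem into: characterize all $\tilde A$ with $\dot Y_t=\tilde A Y_t+Y_t\tilde A^\top$ for every $t\in[0,T]$. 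Because $\log M$ commutes with $M^{t/T}$, one has the symmetrized expression $\dot Y_t=\tfrac1T(\log M)M^{t/T}=\tfrac1{2T}\bigl((\log M)M^{t/T}+M^{t/T}\log M\bigr)$.

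Next I would split $\tilde A$ into symmetric and skew parts. Evaluating the reduced equation at $t=0$ gives $\tilde A+\tilde A^\top=\tfrac1T\log M$, so the symmetric part of $\tilde A$ is pinned to $\tfrac1{2T}\log M$; write $\tilde A=\tfrac1{2T}\log M+\Omega$ with $\Omega\in\mathbb A^n$. Substituting this back, the $\tfrac1{2T}\log M$ contribution reproduces $\dot Y_t$ exactly (again using that $\log M$ and $M^{t/T}$ commute), and the identity collapses to $\Omega M^{t/T}-M^{t/T}\Omega=0$ for all $t\in[0,T]$. Setting $t=T$ gives $[\Omega,M]=0$; conversely, if $[\Omega,M]=0$ then $\Omega$ commutes with the spectral projections of $M$ and hence with $M^{t/T}$ for every $t$, so the reduced equation holds. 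Thus $\Omega$ ranges exactly over $L_M$, and conjugating back, $A=\Sigma_0^{1/2}\tilde A\Sigma_0^{-1/2}=A^F+\Sigma_0^{1/2}\Omega\Sigma_0^{-1/2}$, i.e.\ $\mathcal A=A^F+\Sigma_0^{1/2}L_M\Sigma_0^{-1/2}$.

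For the ``in particular'' clauses I would argue as follows. If $M$ has simple spectrum, its centralizer in $\mathbb R^{n\times n}$ is the commutative algebra of polynomials in $M$, which consists of symmetric matrices; the only skew-symmetric element there is $0$, so $L_M=\{0\}$ and $A=A^F$ is the unique solution. For the last assertion, note that $A\Sigma_0=\Sigma_0A^\top$ is equivalent, after multiplying on the left and right by $\Sigma_0^{-1/2}$, to $\tilde A=\tilde A^\top$; since $\tilde A=\tfrac1{2T}\log M+\Omega$ with $\Omega$ skew, symmetry forces $\Omega=0$, hence $A=A^F$, and conversely $A^F\Sigma_0=\tfrac1{2T}\Sigma_0^{1/2}(\log M)\Sigma_0^{1/2}$ is manifestly symmetric, so $A^F$ indeed satisfies the condition.

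I do not expect a serious obstacle: the only delicate points are the bookkeeping in the conjugation step (keeping track of which transposes land where), the functional-calculus equivalence ``$[\Omega,M^{t/T}]=0$ for all $t$ $\iff$ $[\Omega,M]=0$,'' and the standard structural fact that the centralizer of a matrix with simple spectrum reduces to polynomials in that matrix; each is routine, and the remaining steps are elementary linear algebra.
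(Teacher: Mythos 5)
Your proposal is correct and follows essentially the same conjugation strategy as the paper: both reduce to $\tilde A M^{t/T}+M^{t/T}\tilde A^\top=\tfrac1T(\log M)M^{t/T}$ after factoring out $\Sigma_0^{\pm1/2}$, then pin the symmetric part of $\tilde A$ to $\tfrac1{2T}\log M$ and obtain a commutation constraint on the skew part $\Omega$. The only cosmetic difference is that the paper left-multiplies by $M^{-t/T}$ and differentiates in $t$ to get $[\log M,\tilde A]=0$, whereas you substitute the decomposition and evaluate at $t=0$ and $t=T$ to arrive directly at $[\Omega,M]=0$; the two are equivalent via functional calculus.
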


\begin{proof}
Let $\Gamma_t= M^{t/T}$ and $C:=\log M\in \mathbb{S}^n$ the principal logarithm. Then, $\Gamma_t=e^{(t/T)C}$ and $\dot\Gamma_t=(1/T)\Gamma_t C$. 
Write $Y:=\Sigma_0^{-1/2}A\Sigma_0^{1/2}$ where $A$ satisfies $\dot{\Sigma}_t^F =A\Sigma_t^F+\Sigma_t^F A^\top,t\in[0,T]$. Then, 
\[
Y\Gamma_t+\Gamma_t Y^\top \;=\; \tfrac{1}{T}\,\Gamma_t C,\qquad \forall t\in[0,T].
\]
Left-multiplying by $\Gamma_t^{-1}$ and setting $s=t/T$ gives
\[
e^{-sC}Ye^{sC}+Y^\top=\tfrac{1}{T}C,\qquad \forall s\in[0,1].
\]
Differentiating in $s$ yields $-[C,e^{-sC}Ye^{sC}]=0$ for all $s$, hence $[C,Y]=0$. 
Decomposing $Y$ into its symmetric and skew-symmetric parts $Y=S+X$, the identity at $s=0$ gives $S=C/(2T)$, while $[C,Y]=0$ forces $[C,X]=0$, and hence $X\in L_M$. 
Conversely, if $Y=\tfrac{1}{2T}C+X$ with $X\in L_M$, then $Y\Gamma_t+\Gamma_t Y^\top=\tfrac{1}{T}\Gamma_t C$.
Returning to $A=\Sigma_0^{1/2}Y\Sigma_0^{-1/2}$ yields $\dot{\Sigma}_t^F =A\Sigma_t^F+\Sigma_t^F A^\top$.
Finally, if $M$ has a simple spectrum, then the commutant of its principal logarithm, $\mathrm{Comm}(C)$, is given by all polynomials in $C$. Hence, the only $X\in\mathbb{A}^n$ commuting with $C$ is $X=0$. This proves uniqueness (more generally, degeneracies in the multiplicities $m_j$ of eigenvalues of $M$ make $L_M\simeq \bigoplus_j \mathbb{A}^{m_j}$). This completes the proof.
\end{proof}
Using the above proposition, we show next that any minimizer of $J_\alpha$ is close to the Fisher–Rao pair $(A^{F},\Sigma^{F})$ in the sense that a \textit{Fisher geodesic-inducing cost} is kept small.
\begin{lemma}\label{lem:Fisher-vs-Attention}
Let $T>0$, $\beta\in(0,1)$, $\Sigma_0,\Sigma_T\in\mathbb S_{+}^n$, and $(A,\Sigma)\in\mathcal{H}$ satisfying $\dot\Sigma_t = A_t\Sigma_t+\Sigma_tA_t^\top $ a.e. on $[0,T]$. Define the \textit{Fisher geodesic-inducing cost:} 
\begin{align}
    \mathcal F_\beta(A,\Sigma)
&:= \beta\int_0^T\|A_t-\Sigma_tA_t^\top\Sigma_t^{-1}\|_{\Sigma_t}^2\mathrm dt\nonumber
\\
&+(1-\beta)\int_0^T\mathrm{tr}\big(\dot A_t\Sigma_t\dot A_t^\top\big)\mathrm dt,\label{eq:FisherCost}
\end{align}
where $\|X\|_{\Sigma}^2:=\mathrm{tr}(X\Sigma X^\top)$. Then, $(A^{F},\Sigma^{F})$ is the unique minimizer of \eqref{eq:FisherCost}. Moreover, if $0 < cI \leq \Sigma_t\leq CI$ for a.a. $t\in[0,T]$, then 
\begin{align}\label{eq:tight-F-by-J}
\mathcal F_\beta(A,\Sigma)\ \le\ 
K J_\alpha(A,\Sigma),
\end{align}
where $K:=\max(4\beta C^2/(\alpha c), (1-\beta)/(1-\alpha))$.
\end{lemma}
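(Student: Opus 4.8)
The plan is to prove the two assertions separately: first that $(A^{F},\Sigma^{F})$ is the unique minimizer of $\mathcal F_\beta$ among feasible pairs (those satisfying the zero--noise Lyapunov identity together with the prescribed endpoints $\Sigma_0,\Sigma_T$), and then the pointwise estimate behind \eqref{eq:tight-F-by-J}. For the first assertion, I would first note that both integrands of $\mathcal F_\beta$ are nonnegative: $\|A_t-\Sigma_tA_t^\top\Sigma_t^{-1}\|_{\Sigma_t}^2$ is a positive-definite-weighted norm-squared, and $\mathrm{tr}(\dot A_t\Sigma_t\dot A_t^\top)=\|\dot A_t\Sigma_t^{1/2}\|_F^2\ge 0$ since $\Sigma_t\succ 0$. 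Hence $\mathcal F_\beta\ge 0$, and it suffices to show $\mathcal F_\beta(A^{F},\Sigma^{F})=0$ and that the value $0$ is attained nowhere else. The temporal term vanishes at the Fisher pair because $A^{F}$ is constant, and the first term vanishes because $A^{F}\Sigma^{F}_t=\tfrac1{2T}\Sigma_0^{1/2}(\log M)\,M^{t/T}\,\Sigma_0^{1/2}$ is symmetric --- $\log M$ and $M^{t/T}$ are symmetric and commute as functions of $M$ --- which is exactly the statement $A^{F}_t-\Sigma^{F}_t(A^{F}_t)^\top(\Sigma^{F}_t)^{-1}=0$.

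For uniqueness, suppose $\mathcal F_\beta(A,\Sigma)=0$ for a feasible pair. Since $\Sigma_t\succ0$, vanishing of the two integrands forces $\dot A\equiv 0$ (so $A$ coincides a.e.\ with a constant $A_0$) and $A_0\Sigma_t=\Sigma_tA_0^\top$ for a.e.\ $t$, hence for all $t$ by continuity of $t\mapsto\Sigma_t$; in particular $A_0\Sigma_0$ is symmetric, so $\hat A:=\Sigma_0^{-1/2}A_0\Sigma_0^{1/2}\in\mathbb S^n$. As $\Sigma$ solves the Lyapunov equation with constant drift $A_0$, one has $\Sigma_t=e^{A_0t}\Sigma_0e^{A_0^\top t}=\Sigma_0^{1/2}e^{2\hat A t}\Sigma_0^{1/2}$, so the endpoint constraint at $t=T$ reads $e^{2T\hat A}=\Sigma_0^{-1/2}\Sigma_T\Sigma_0^{-1/2}=M$. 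Uniqueness of the real symmetric logarithm gives $\hat A=\tfrac1{2T}\log M$, whence $A_0=A^{F}$ and $\Sigma_t=\Sigma_0^{1/2}M^{t/T}\Sigma_0^{1/2}=\Sigma^{F}_t$.

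For the comparison inequality I would bound each integrand of $\mathcal F_\beta$ by the matching integrand of $J_\alpha$ and then combine. The two temporal integrands are identical, so the temporal part of $\mathcal F_\beta$ equals $\tfrac{1-\beta}{1-\alpha}$ times the temporal part of $J_\alpha$. For the first integrand, under $cI\le\Sigma_t\le CI$, the triangle inequality in the $\Sigma_t$-norm together with $\|A_t\|_{\Sigma_t}^2=\mathrm{tr}(\Sigma_tA_t^\top A_t)\le C\|A_t\|_F^2$ and $\|\Sigma_tA_t^\top\Sigma_t^{-1}\|_{\Sigma_t}^2=\|\Sigma_t^{-1/2}A_t\Sigma_t\|_F^2\le(C^2/c)\|A_t\|_F^2$ gives $\|A_t-\Sigma_tA_t^\top\Sigma_t^{-1}\|_{\Sigma_t}\le(\sqrt C+C/\sqrt c)\|A_t\|_F\le 2(C/\sqrt c)\|A_t\|_F$, where the last step uses $c\le C$; squaring and integrating shows the first term of $\mathcal F_\beta$ is at most $\tfrac{4\beta C^2}{\alpha c}$ times the spatial part of $J_\alpha$. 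Finally, writing $J_\alpha$ as the sum of its nonnegative spatial and temporal parts and using the elementary inequality $ax+by\le\max(a,b)(x+y)$ for $x,y\ge0$ yields $\mathcal F_\beta(A,\Sigma)\le\max\!\big(\tfrac{4\beta C^2}{\alpha c},\tfrac{1-\beta}{1-\alpha}\big)J_\alpha(A,\Sigma)$.

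The step I expect to demand the most care is pinning down the \emph{sharp} constant $4C^2/c$ in the bound on $\|A_t-\Sigma_tA_t^\top\Sigma_t^{-1}\|_{\Sigma_t}^2$: the naive estimate $\|\Sigma_tA_t^\top\Sigma_t^{-1}\|_F\le\|\Sigma_t\|\,\|A_t\|_F\,\|\Sigma_t^{-1}\|$ loses a factor, so one should instead rewrite the $\Sigma_t$-weighted norm of $\Sigma_tA_t^\top\Sigma_t^{-1}$ as the plain Frobenius norm of $\Sigma_t^{-1/2}A_t\Sigma_t$ before estimating, and then exploit $c\le C$ to fold the cross term of the triangle inequality into the stated constant. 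The remaining ingredients --- nonnegativity of the integrands, the matrix-exponential step in the uniqueness argument, and the final max-combination --- are routine.
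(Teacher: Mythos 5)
Your proposal is correct, and both halves reach the same conclusions as the paper. The uniqueness argument is essentially identical to the paper's: you pass to congruence coordinates, observe that vanishing of the temporal term forces a constant generator, that vanishing of the spatial term makes $\hat A=\Sigma_0^{-1/2}A_0\Sigma_0^{1/2}$ symmetric, derive $\Sigma_t=\Sigma_0^{1/2}e^{2\hat A t}\Sigma_0^{1/2}$, and invoke uniqueness of the symmetric logarithm at $t=T$; the paper phrases the same chain through $\Gamma_t=e^{2tY}$ with $Y=\Sigma_0^{-1/2}A\Sigma_0^{1/2}$.

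For the comparison inequality your route differs in one step. The paper sets $B_t:=\Sigma_t^{-1/2}A_t\Sigma_t^{1/2}$ and observes the exact identity $A_t-\Sigma_tA_t^\top\Sigma_t^{-1}=2\,\Sigma_t^{1/2}D_t\Sigma_t^{-1/2}$ where $D_t$ is the skew part of $B_t$, so that $\|A_t-\Sigma_tA_t^\top\Sigma_t^{-1}\|_{\Sigma_t}^2=4\|D_t\|_{\Sigma_t}^2$; the factor $4$ then appears structurally as $2^2$, and the spectral bounds give $4\|D_t\|_{\Sigma_t}^2\le 4C\|B_t\|_F^2\le (4C^2/c)\|A_t\|_F^2$ without ever comparing $c$ and $C$. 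You instead apply the triangle inequality in the $\Sigma_t$-weighted norm, bound $\|A_t\|_{\Sigma_t}\le\sqrt C\,\|A_t\|_F$ and $\|\Sigma_tA_t^\top\Sigma_t^{-1}\|_{\Sigma_t}\le(C/\sqrt c)\|A_t\|_F$, and then invoke $c\le C$ to fold $\sqrt C+C/\sqrt c$ into $2C/\sqrt c$. Both routes land on the same constant $4C^2/c$, but the paper's decomposition is tighter at the pointwise level (it bounds only the skew part, so it is sharp when $A_t$ already nearly satisfies $A_t\Sigma_t=\Sigma_tA_t^\top$), whereas yours is more elementary and makes explicit the $\max(a,b)(x+y)\ge ax+by$ step that the paper leaves implicit in its closing sentence. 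Either argument is acceptable here since the lemma only asserts the stated $K$.
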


\begin{proof}
We first show that $(A^{F},\Sigma^{F})$ is the unique minimizer of \eqref{eq:FisherCost}. For the Fisher pair $(A^{F},\Sigma^F)$, $A^{F}$ is constant and $A^{F}\Sigma_t^{F}=\Sigma_t^{F}(A^{F})^\top$ holds for all $t$. Hence, $\mathcal F_\beta(A^{F},\Sigma^{F})=0$. Conversely, if $\mathcal F_\beta(A,\Sigma)=0$ then the temporal term yields $\dot A_t\equiv0$ (hence $A_t\equiv A$ is constant) and $A\Sigma_t=\Sigma_tA^\top$ for all $t$. In congruence coordinates $\Gamma_t:=\Sigma_0^{-1/2}\Sigma_t\Sigma_0^{-1/2}$ and $Y:=\Sigma_0^{-1/2}A\Sigma_0^{1/2}$ this implies \(\Gamma_t=e^{2tY}\). At $t=T$ this is \(e^{2TY}=M\). The symmetric logarithm is unique, hence \(Y=\tfrac1{2T}\log M\), i.e., $(A,\Sigma)=(A^{F},\Sigma^F)$.

Next, we prove \eqref{eq:tight-F-by-J}. To this end, let $B_t:=\Sigma_t^{-1/2}A_t\Sigma_t^{1/2}$ and  $D_t = (B_t-B_t^\top)/2$. Then
\begin{align*}
&\|A_t-\Sigma_tA_t^\top\Sigma_t^{-1}
\|_{\Sigma_t}^2= \|\Sigma_t^{1/2}(B_t-B_t^\top)\Sigma_t^{-1/2}\|^2_{\Sigma_t} \\
&=\|2\Sigma_t^{1/2}D_t\Sigma_t^{-1/2}\|_{\Sigma_t}^2=4\|D_t\|_{\Sigma_t}^2\leq 4C\|B_t\|_F^2\\
&\leq (4C^2/c)\tr(\Sigma_t^{-1}B_t^\top\Sigma_tB_t)=(4C^2/c)\tr(AA^\top).
\end{align*}
Thus, the first term in $\mathcal{F}_\beta(A,\Sigma)$ is bounded as follows
\begin{align*}
&\beta\int_0^T\|A_t-\Sigma_tA_t^\top\Sigma_t^{-1}
\|_{\Sigma_t}^2\text{d}t \leq \frac{4\beta C^2}{c\alpha}\alpha\int_0^T\|A_t\|_F^2\text{d}t.
\end{align*}
Adding the second term in $\mathcal{F}_\beta(A,\Sigma)$ to both sides of the above inequality  yields \eqref{eq:tight-F-by-J}.
\end{proof}

%\begin{remark}
%\emph{(Closeness to Fisher–Rao).} Inequality \eqref{eq:tight-F-by-J} shows that small attention $J_\alpha$ forces the Fisher–violation (the first term of $\mathcal F_\beta$) and the temporal variation of $A_t$ (the second term) to be small \emph{simultaneously}; by Lemma \ref{lem:Fisher-vs-Attention}, this pins $(A,\Sigma)$ near the Fisher–Rao pair $(A^{F},\Sigma^{F})$.
%\emph{(Best constant via balancing).} The constant in \eqref{eq:tight-F-by-J} is minimized by balancing the two terms:
%\[
%\frac{4\beta\,M_*^2}{\alpha\,m_*}\ =\ \frac{1-\beta}{1-\alpha}
%\quad\Longrightarrow\quad
%\beta^\star\;=\;\frac{\alpha\,m_*}{\alpha\,m_*+4(1-\alpha)M_*^2}.
%\]
%Taking $\beta=\beta^\star$ gives the sharpest comparison obtainable from this argument. 
%In particular, the aligned choice $\beta=\alpha$ yields $K_*=\max\{4M_*^2/m_*,\,1\}$.
%\end{remark}

\section{Numerical Example} \label{sec:examples}
\begin{figure*}[t]
\centering
\begin{subcaptionblock}{\linewidth}
    \begin{subfigure}[c]{0.4\textwidth}
        \centering
        \includegraphics[width=\linewidth]{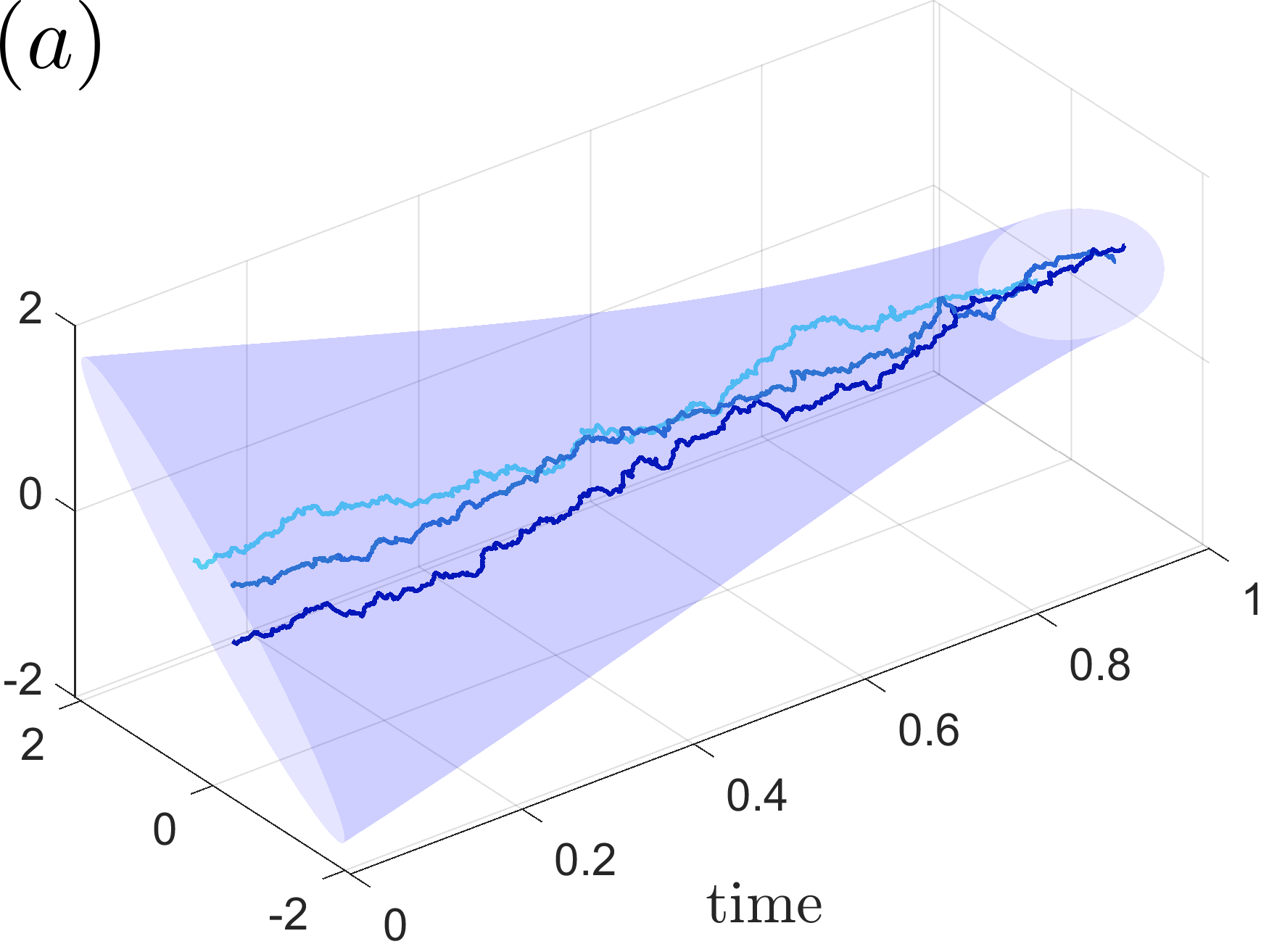}
    \end{subfigure}
    \begin{subfigure}[c]{0.3\textwidth}
        \centering
        \includegraphics[width=\linewidth]{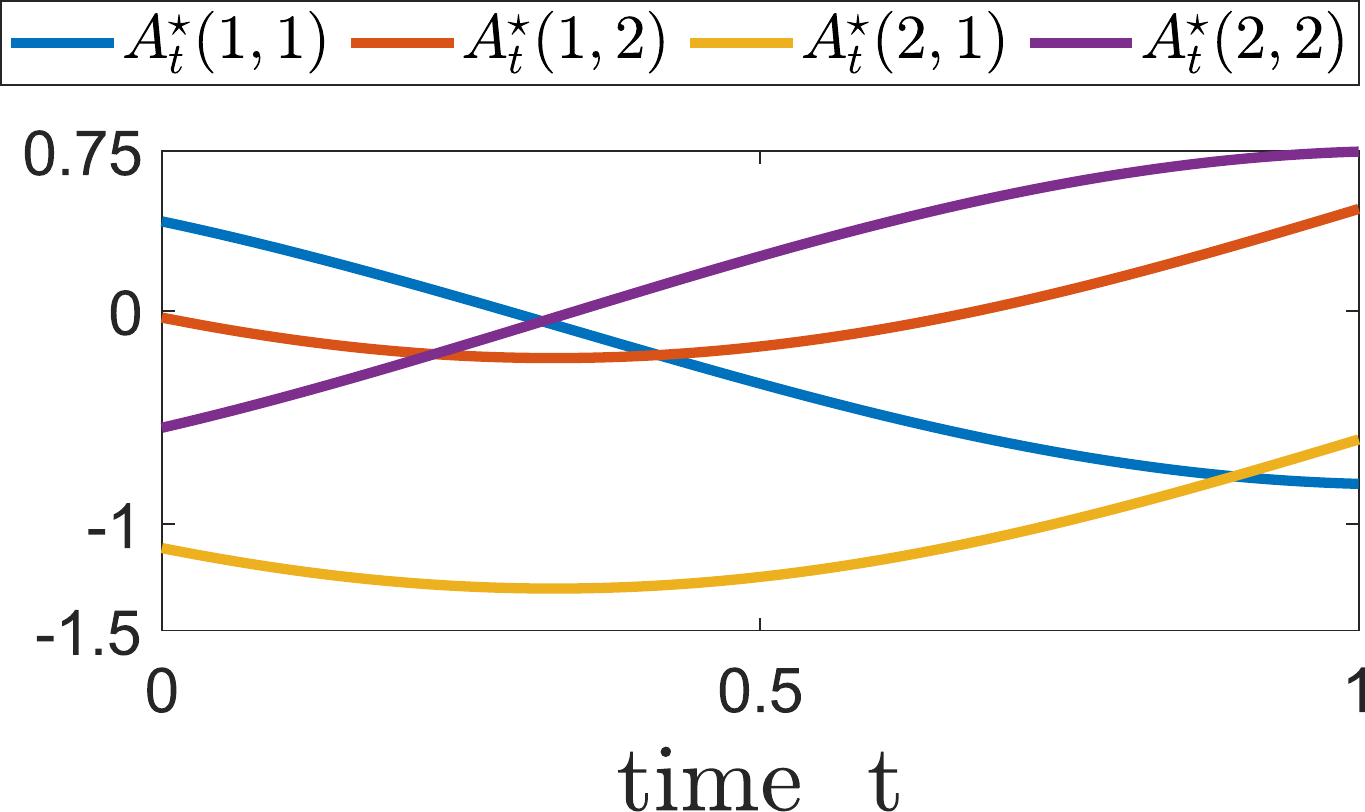}
    \end{subfigure}
     \begin{subfigure}[c]{0.3\textwidth}
        \centering
        \includegraphics[width=\linewidth]{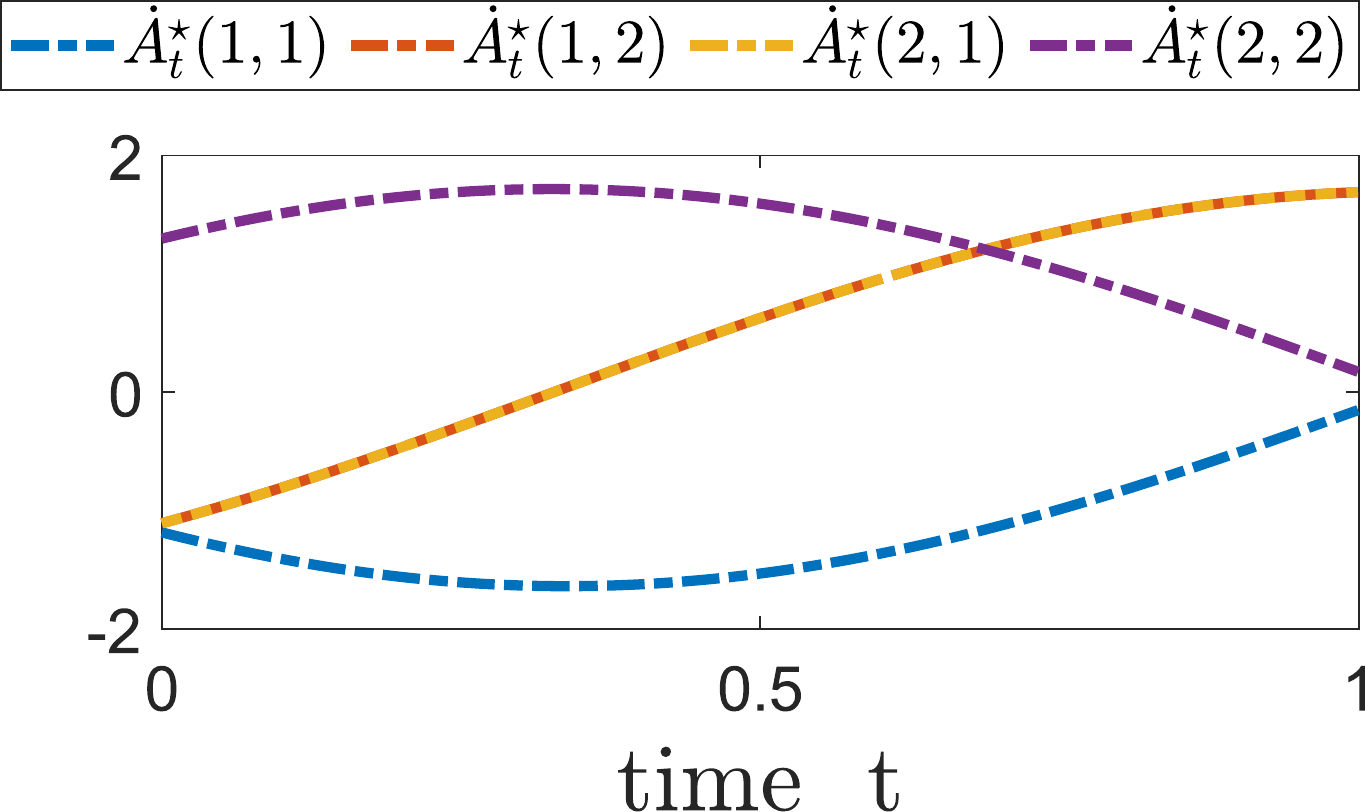}
    \end{subfigure}
    % \caption{$\alpha=1$}\label{fig:panel:a}
    \end{subcaptionblock}
    
    \begin{subcaptionblock}{\linewidth}
    \begin{subfigure}[c]{0.4\textwidth}
    \centering
        \includegraphics[width=\linewidth]{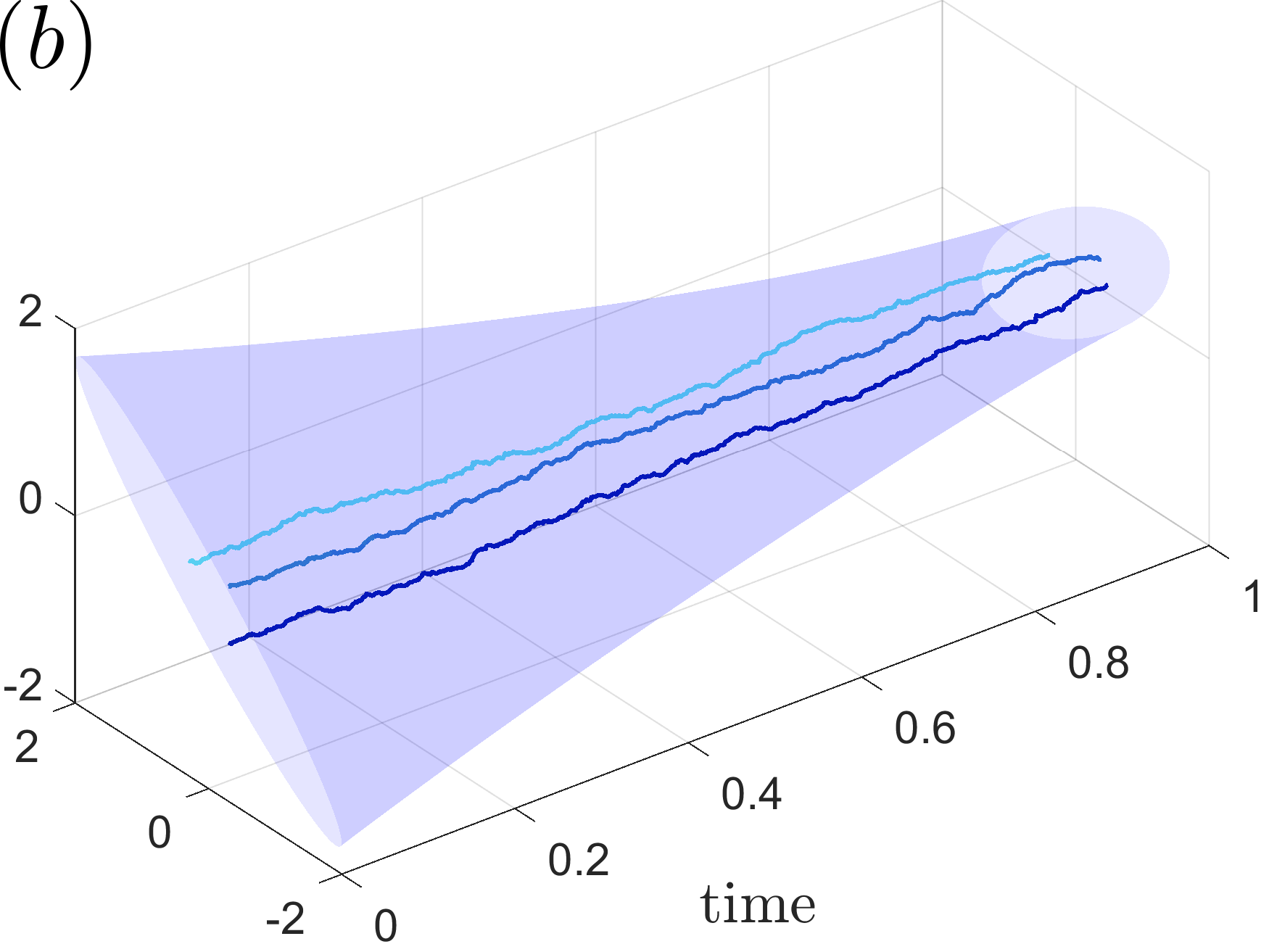}
    \end{subfigure}
    \begin{subfigure}[c]{0.3\textwidth}
    \centering
        \includegraphics[width=\linewidth]{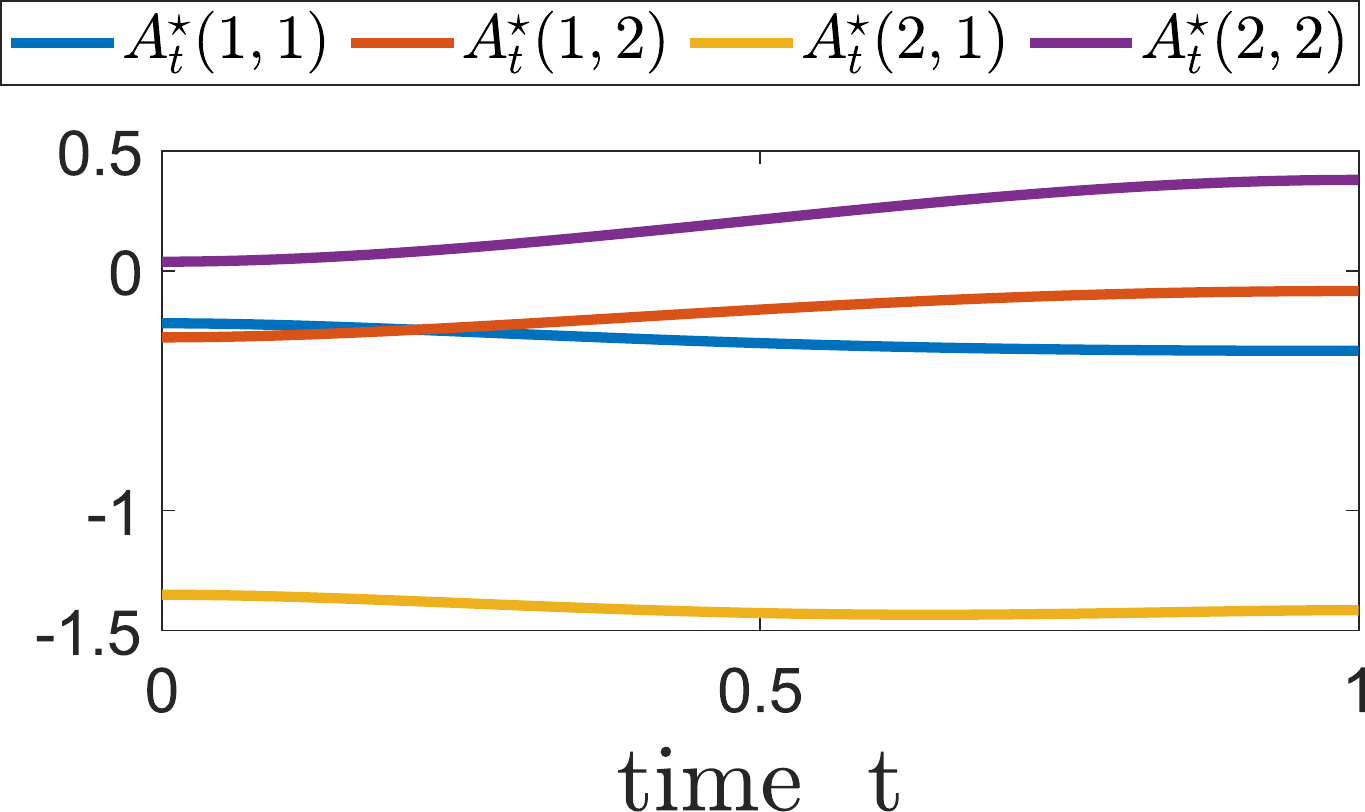}
    \end{subfigure}
     \begin{subfigure}[c]{0.3\textwidth}
     \centering
        \includegraphics[width=\linewidth]{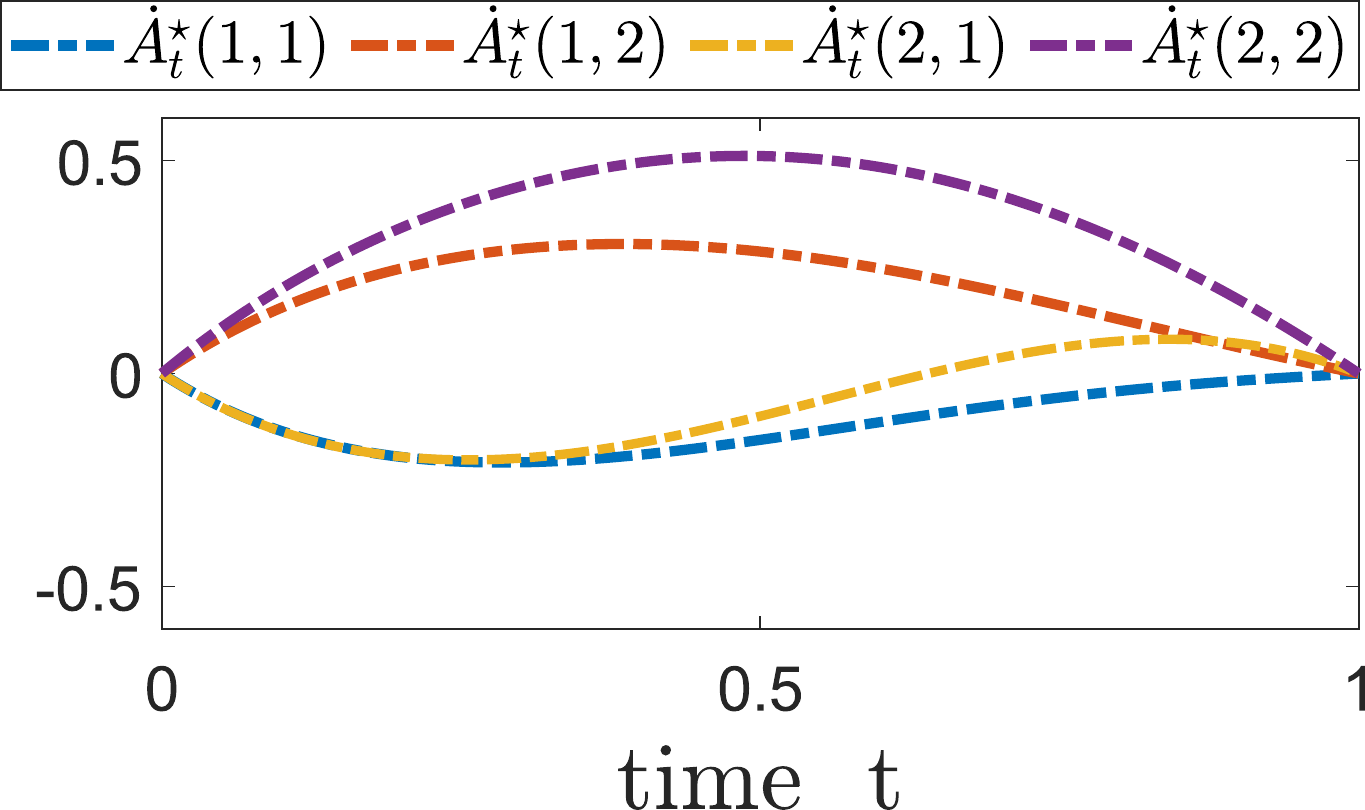}
    \end{subfigure}
    % \caption{$\alpha=\frac{1}{2}$}\label{fig:panel:b}
    \end{subcaptionblock}
\begin{subcaptionblock}{\linewidth}
    \begin{subfigure}[c]{0.4\textwidth}
        \centering
        \includegraphics[width=\linewidth]{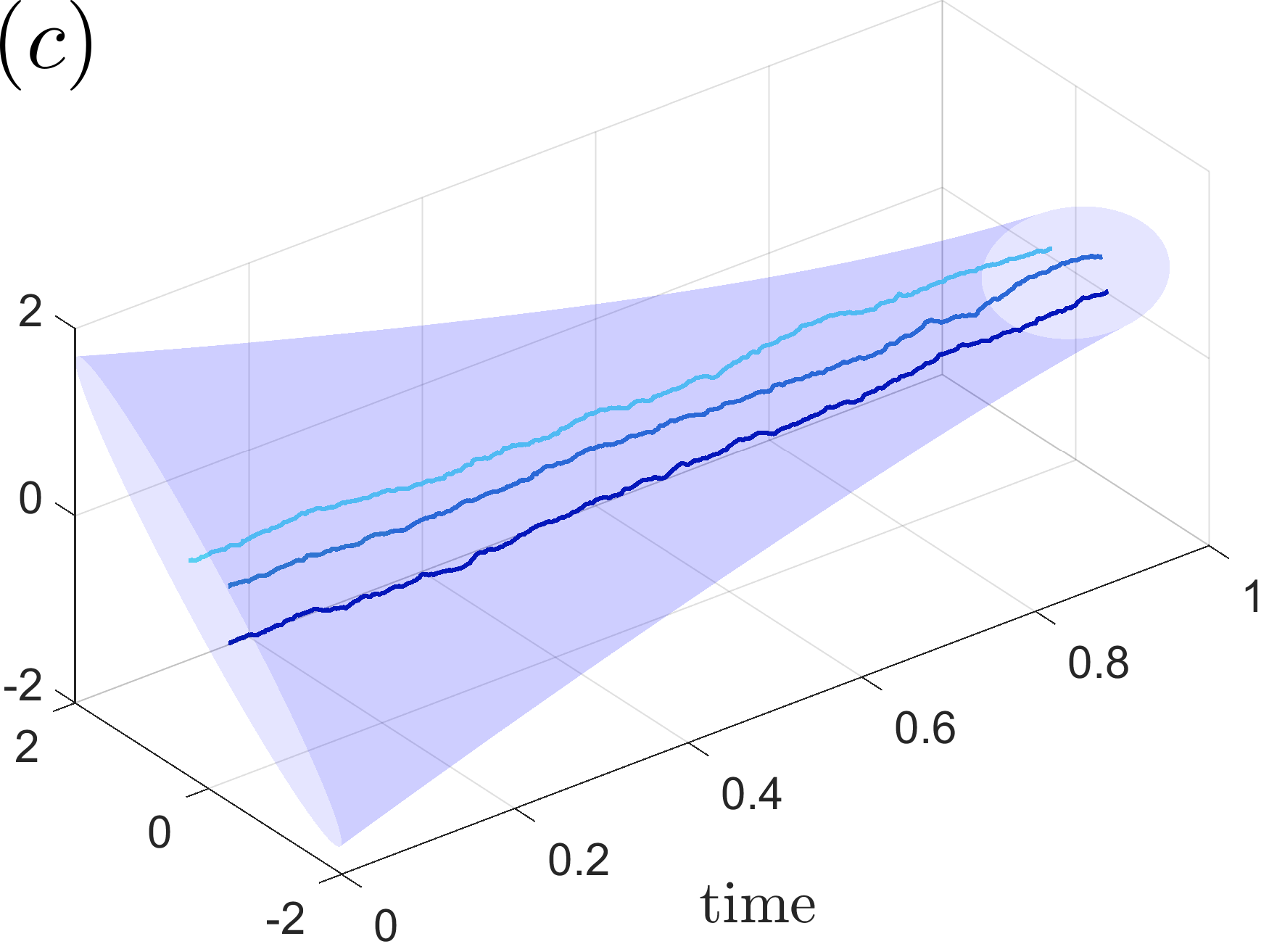}
    \end{subfigure}
    \begin{subfigure}[c]{0.3\textwidth}
        \centering
        \includegraphics[width=\linewidth]{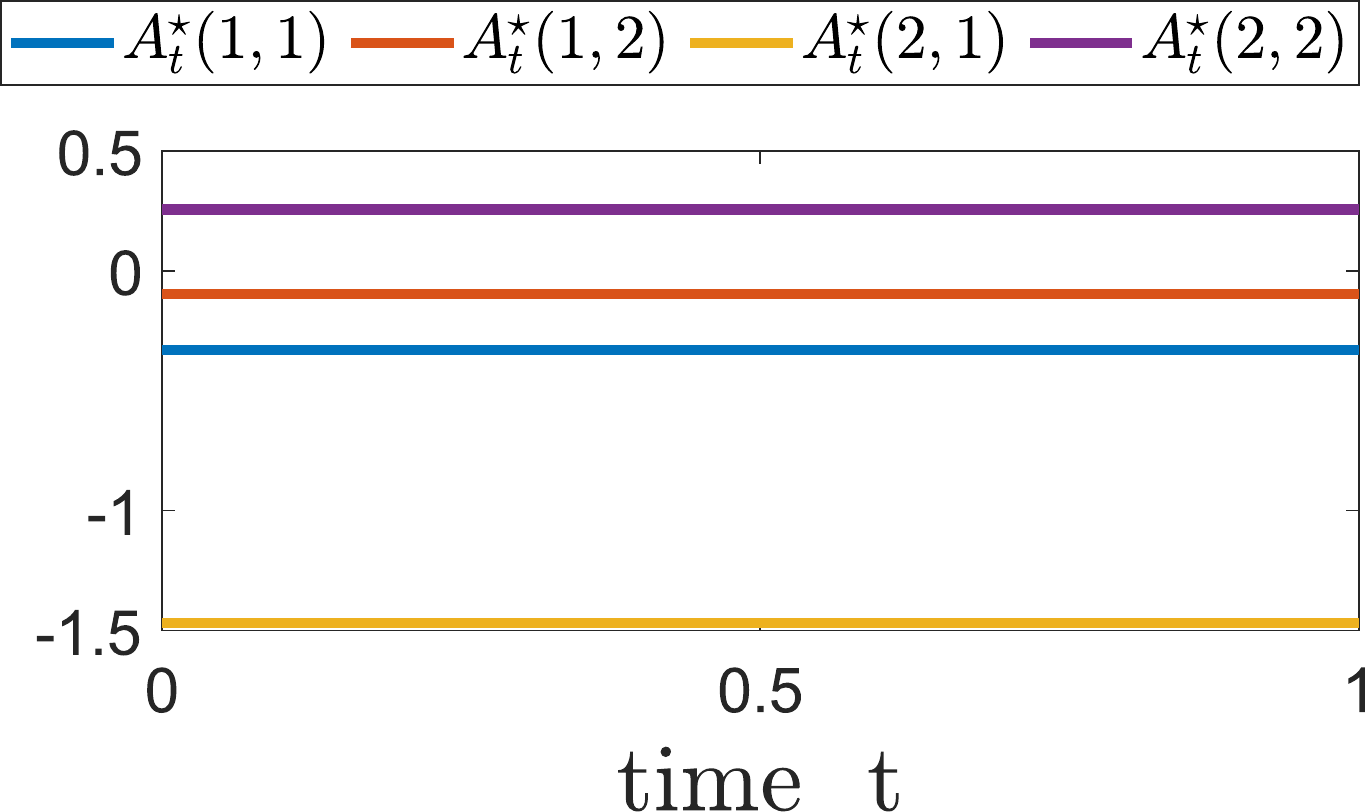}
    \end{subfigure}
     \begin{subfigure}[c]{0.3\textwidth}
        \centering
        \includegraphics[width=\linewidth]{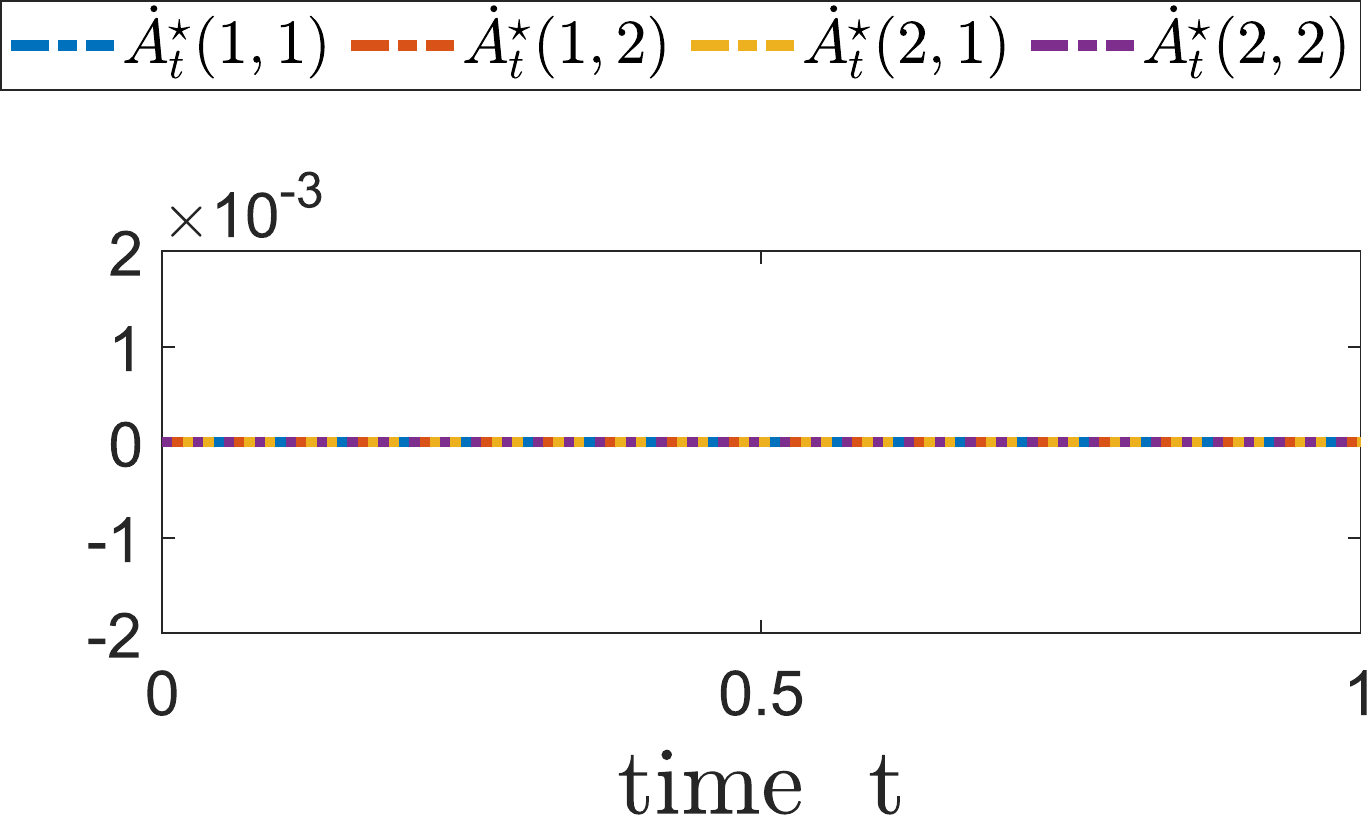}
    \end{subfigure}
    % \caption{$\alpha=0$}\label{fig:panel:c}
    \end{subcaptionblock}
    \caption{Numerical solutions of the two-point boundary value problem \eqref{eq:neccessary-Eqs} for different cost weighing $\alpha$.}
    \end{figure*}

{\color{black}
We present an academic example for Problem~\ref{problem:main-min-attention}, where $n=m=2$ and the flow information is specified as follows:
\begin{align*}
\Sigma_{\rm init} = \begin{bmatrix}
    4 & \sqrt{11} \\
    \sqrt{11} & 3
\end{bmatrix},  \Sigma_{\rm fin} = \begin{bmatrix}
    2 & -1, \\
    -1 & 1
\end{bmatrix}, B = \frac{1}{5} I.
\end{align*}
We solve the corresponding two-point boundary value problem defined by the system \eqref{eq:neccessary-Eqs} using MATLAB's solver \texttt{bvp5c} for the case $\alpha=1/2$. The associated results are depicted in Fig.~1(b). 
The left panel illustrates the equi-probability level sets of the corresponding Gaussian distributions that connect the specified endpoints, along with sample trajectories of the associated process.  The evolutions of the corresponding control matrix and its derivative are shown in the middle and right panels, respectively. The slight variation in the control signal over time is indeed due to the minimization of the attention functional. 

The solution at $\alpha=1/2$ is used to initialize the numerical solver to find nearest solutions at the cases $\alpha=1$ and $\alpha=0$. The corresponding results are presented in Fig.~1(a) and Fig.1~(c), respectively.  The substantial variation in the control over time for the case $\alpha=1$ is a result of excluding the temporal attention from the cost functional \eqref{eq:J}. Conversely, the constant control signal in the case $\alpha=0$ is a result of omitting the spatial attention from the cost. 

}

\section{Concluding remarks}
\label{sec:conclusions}
The purpose of this work has been to revisit the concept of `attention.' This concept was introduced by Roger Brockett in order to quantify what one would colloquially describe as the attention required to adjust the applied control action based on state and timing information.
Our intention has been to formalize and study Brockett's concept for linear dynamical systems and Gaussian statistics. This endeavor necessitated introducing an `averaged version of attention,'' as expressed in \eqref{eq:J}, weighed over regions of the state space in accordance with a relevant probability law.

On the surface, the attention functional appears similar to expressions in linear Gaussian-Markov system theory. However, this new paradigm does not share the enabling convexity of the linear-quadratic theory, and as a consequence, establishing existence and regularity of solutions proved technically demanding. It is carried out folloing the general lines of the direct method in the calculus of variations.

In particular, we analyzed separately the two special case of the attention functional--the \emph{spatial} and the \emph{temporal}, that provide natural extreme choice of attention in these two extreme instances when on is interested in the purely spatial, or temporal, variability of the control protocol. We established existence and regularity of minimizers in these cases as well, and in a certain setting, we compared with an alternative information-theoretic functional that gives rise to Fisher–Rao geodesics. Our development complements and parallels the theory of Gaussian steering/Schr\"odinger bridge for linear systems \cite{ChenGeorgiouPavon2016_TAC1,ChenGeorgiouPavon2016_TAC2,Leonard2014SchrodingerSurvey} and speaks to resource–aware control viewpoints (periodic/event–triggered actuation and information constraints) \cite{HeemelsDonkersTeel2013}.

We expect that our treatment of attention–penalized Gaussian steering will provide the basis for theoretical advances as well as expand the range of applications by helping to balance sensing/actuation effort and cost.
Further, it is our hope that future work will illuminate apparent links between attention and sheer in fluid flow \cite{Arnold1966,EbinMarsden1970,HolmMarsdenRatiu1998} possibly leading to useful variational formalisms \cite{MerigotMirebeau2016,BenamouCarlierNenna2019,ArnaudonCruzeiroLeonardZambrini2020,BaradatMonsaingeon2020} that may drive the latter.

\bibliographystyle{IEEEtran}
\bibliography{main}

\end{document}